 %%%Rota-Baxter operators on $\bfk[x]$$
%================================================================
\documentclass[12pt]{amsart}
\usepackage{amsmath}
\usepackage{amsfonts}
\usepackage{amssymb}
\usepackage{bbding}
\usepackage{txfonts}
\usepackage[shortlabels]{enumitem}
\usepackage{ifpdf}
\ifpdf
  \usepackage[colorlinks,final,backref=page,hyperindex]{hyperref}
\else
  \usepackage[colorlinks,final,backref=page,hyperindex,hypertex]{hyperref}
\fi

%\usepackage{stmaryrd}
%\usepackage{txfonts}
%\usepackage{eucal}
%\usepackage{graphicx}
%\usepackage{amscd}
%\usepackage[all]{xy}           %xypic macro for latex2.09
%\usepackage[active]{srcltx} %SRC Specials for DVI Searching
%\usepackage{epsfig}
%\usepackage{float}
%\usepackage{color}
%\usepackage{fancybox}
%\usepackage{multicol}
%\usepackage{colordvi}
%\usepackage{xspace}
%\usepackage{latexsym}
%\usepackage{axodraw}
%\usepackage{stmaryrd}
%\usepackage[hypertex]{hyperref} %put this package last

%======================================================================
    %was    1, 1.5 for double sp
%======================================================================
%%standard setting
%\topmargin -0.3truein \textheight 8.4truein
%\oddsidemargin 0.2truein
%\evensidemargin 0.2truein \textwidth 440pt
%======================================================================
%%little larger standard setting: good setting
\topmargin -.8cm \textheight 22.8cm \oddsidemargin 0cm \evensidemargin -0cm \textwidth 16.3cm
%========================================================================================%%wide
%%lower setting for 1920x1080
%%\topmargin -.9cm \textheight 21cm \oddsidemargin 0cm \evensidemargin -0cm \textwidth 16.3cm
%%%%%%%%%%%%%%
%=======================================================
%%lower setting for 2048x1152
%\topmargin -.8cm \textheight 22.4cm \oddsidemargin 0cm \evensidemargin -0cm \textwidth 16.3cm
%========================================================================================%%wide

%=======================================================================
%%wide note setting, no margin
%\topmargin -1.6cm \textheight 25cm \oddsidemargin -0.9cm
%\textwidth 19cm \evensidemargin -0.9cm
%======================================================================
%%print narrow note setting
%\topmargin -0.5truein \textheight 9.8truein
%\oddsidemargin -0.7truein \evensidemargin -0.7truein
%\textwidth 340pt
%======================================================================
%\makeatletter

%%%%%%%%%%%%%%%%%%%%%%%% Statements
\newtheorem{theorem}{Theorem}[section]
\newtheorem{lemma}[theorem]{Lemma}
\newtheorem{coro}[theorem]{Corollary}

\newtheorem{conjecture}[theorem]{Conjecture}
\newtheorem{prop}[theorem]{Proposition}
\theoremstyle{definition}
\newtheorem{defn}[theorem]{Definition}

\newtheorem{exam}[theorem]{Example}
\newtheorem{algorithm}[theorem]{Algorithm}

%==========================================================================

\newcommand{\nc}{\newcommand}
\newcommand{\delete}[1]{}

%==========================================================================

\nc{\tred}[1]{\textcolor{red}{#1}} \nc{\tblue}[1]{\textcolor{blue}{#1}} \nc{\tgreen}[1]{\textcolor{green}{#1}} \nc{\tpurple}[1]{\textcolor{purple}{#1}} \nc{\btred}[1]{\textcolor{red}{\bf #1}} \nc{\btblue}[1]{\textcolor{blue}{\bf #1}} \nc{\btgreen}[1]{\textcolor{green}{\bf #1}} \nc{\btpurple}[1]{\textcolor{purple}{\bf #1}}

\renewcommand{\Bbb}{\mathbb}

%=========================================================================

\newcommand{\efootnote}[1]{}
%========================================================================

%%\newcommand\wyscc[1]{}
\renewcommand{\textbf}[1]{}
%========================================================================

%\delete{
\nc{\mlabel}[1]{\label{#1}}  % Use this to suppress names
\nc{\mcite}[2][]{\cite[#1]{#2}}  % Use this to suppress names
\nc{\mref}[1]{\ref{#1}}  % Use this to suppress names
\nc{\mbibitem}[1]{\bibitem{#1}} % Use this to show number
%}

\delete{
\nc{\mlabel}[1]{\label{#1}  % Use the next two lines to show names
{\hfill \hspace{1cm}{\bf{{\ }\hfill(#1)}}}}
\nc{\mcite}[2][1]{\cite[#1]{#2}{{\bf{{\ }(#2; #1)}}}}  % Use this lines to show names
\nc{\mref}[1]{\ref{#1}{{\bf{{\ }(#1)}}}}  % Use this lines to show names
\nc{\mbibitem}[1]{\bibitem[\bf #1]{#1}} % Use this to show name
}

%%%%%%% Redefine inequality signs
\renewcommand\geq{\geqslant}
\renewcommand\leq{\leqslant}

\renewcommand\bar[1]{\overline{#1}}
%\renewcommand\tilde[1]{\widetilde{#1}}
%\newcommand\text[1]{{\rm #1}}

%=========================================================================

%%%%%%%%%%%%%%%%%%%% new symbols

\nc{\into}{I}
\nc{\rbw}{\mathfrak{R}} \nc{\brp}{\mathrm{brp}} \nc{\lead}{\mathrm{Lead}} \nc{\Id}{\mathrm{Id}} \nc{\Irr}{\mathrm{Irr}}
\nc{\vx}{\sigma} \nc{\vy}{\tau} \nc{\dvx}{\sigma^{(1)}} \nc{\dvy}{\tau^{(1)}} \nc{\done}{\vep} \nc{\mcitep}[1]{\mcite{#1}} \nc{\wt}{\mathrm{wt}} \nc{\bre}[1]{|#1|} \nc{\mapmonoid}{\frakM} \nc{\disjoint}{\frakM'}
\nc{\ncpoly}[1]{\langle #1\rangle}  %for noncommutative polynomials
\nc{\mapm}[1]{\lfloor\!|{#1}|\!\rfloor}
%for operated polynomials
\nc{\diff}[1]{{}^\NC\{ #1 \}} \nc{\disj}[1]{\{{#1}\}'} \nc{\mdisj}[1]{\frakM'(#1)} \nc{\brho}{\bar{\rho}} \nc{\om}{\bar{\frakm}} \nc{\frakn}{\mathfrak n} \nc{\ddeg}[1]{^{(#1)}} \nc{\opset}{X} \nc{\genset}{{Z}} \nc{\NC}{\mathrm{{NC}}} \nc{\leaf}{\mathrm{leaf}} \nc{\twig}{\mathrm{twig}} \nc{\fe}{\mathrm{fl}} \nc{\munderline}[1]{#1} \nc{\bo}{o} \nc{\dep}{\mathrm{depth}} \nc{\ofe}{\mathrm{ofl}} \nc{\dfe}{\mathrm{dfe}} \nc{\fex}{\mathrm{fex}} \nc{\dl}{\mathrm{dlex}} \nc{\db}{\mathrm{db}} \nc{\lex}{\mathrm{lex}} \nc{\clex}{\mathrm{clex}} \nc{\dgp}{\mathrm{dgp}} \nc{\dgx}{\mathrm{dgx}} \nc{\br}{\mathrm{br}} \nc{\obd}{\mathrm{odb}} \nc{\ob}{\mathrm{ob}}
\nc{\pie}{\mathrm{PIE}}
\nc{\rbo}{\mathrm{RBO}}
\nc{\supp}{\mathcal{S}}
\nc{\nul}{\mathcal{Z}}
%%%%%%%%%%%%%%%%%%%%%%%

%%%%%%%%%%%%%%%%%%%%%%% symbols
\nc{\bin}[2]{ (_{\stackrel{\scs{#1}}{\scs{#2}}})}  %binomial coeff
\nc{\binc}[2]{ \left (\!\! \begin{array}{c} \scs{#1}\\
    \scs{#2} \end{array}\!\! \right )}  %binomial coeff
\nc{\bincc}[2]{  \left ( {\scs{#1} \atop
    \vspace{-1cm}\scs{#2}} \right )}  %binomial coeff
\nc{\bs}{\bar{S}} \nc{\cosum}{\sqsubset} \nc{\la}{\longrightarrow} \nc{\rar}{\rightarrow} \nc{\dar}{\downarrow} \nc{\dprod}{**} \nc{\dap}[1]{\downarrow \rlap{$\scriptstyle{#1}$}} \nc{\md}[1]{\bar{#1}} \nc{\uap}[1]{\uparrow \rlap{$\scriptstyle{#1}$}} \nc{\defeq}{\stackrel{\rm def}{=}} \nc{\disp}[1]{\displaystyle{#1}} \nc{\dotcup}{\ \displaystyle{\bigcup^\bullet}\ } \nc{\gzeta}{\bar{\zeta}} \nc{\hcm}{\ \hat{,}\ } \nc{\hts}{\hat{\otimes}} \nc{\barot}{{\otimes}} \nc{\free}[1]{\bar{#1}} \nc{\uni}[1]{\tilde{#1}} \nc{\hcirc}{\hat{\circ}} \nc{\leng}{\ell} \nc{\lleft}{[} \nc{\lright}{]} \nc{\lc}{\lfloor} \nc{\rc}{\rfloor}
\nc{\lb}{[} %left bracket
\nc{\rb}{]} %right bracket
\nc{\curlyl}{\left \{ \begin{array}{c} {} \\ {} \end{array}
    \right.  \!\!\!\!\!\!\!}
\nc{\curlyr}{ \!\!\!\!\!\!\!
    \left. \begin{array}{c} {} \\ {} \end{array}
    \right \} }
\nc{\longmid}{\left | \begin{array}{c} {} \\ {} \end{array}
    \right. \!\!\!\!\!\!\!}
\nc{\onetree}{\bullet} \nc{\ora}[1]{\stackrel{#1}{\rar}}
\nc{\ola}[1]{\stackrel{#1}{\la}}%${\Bbb Z}$
\nc{\ot}{\otimes} \nc{\mot}{{{\boxtimes\,}}} \nc{\otm}{\overline{\boxtimes}} \nc{\sprod}{\bullet} \nc{\scs}[1]{\scriptstyle{#1}} \nc{\mrm}[1]{{\rm #1}} \nc{\msum}{\sum\limits}
\nc{\margin}[1]{\marginpar{\rm #1}}   %{\rm #1}}
\nc{\dirlim}{\displaystyle{\lim_{\longrightarrow}}\,} \nc{\invlim}{\displaystyle{\lim_{\longleftarrow}}\,} \nc{\mvp}{\vspace{0.3cm}} \nc{\tk}{^{(k)}} \nc{\tp}{^\prime} \nc{\ttp}{^{\prime\prime}} \nc{\svp}{\vspace{2cm}} \nc{\vp}{\vspace{8cm}} \nc{\proofbegin}{\noindent{\bf Proof: }}
%\nc{\proofbegin}{\begin{proof}} % AMS command
\nc{\proofend}{$\blacksquare$ \vspace{0.3cm}}
%\nc{\proofend}{\end{proof}} %AMS command
\nc{\modg}[1]{\!<\!\!{#1}\!\!>}
%\nc{\intg}[1]{\lceil{#1}\rceil}  %old free int ring
\nc{\intg}[1]{F_C(#1)} \nc{\lmodg}{\!<\!\!} \nc{\rmodg}{\!\!>\!} \nc{\cpi}{\widehat{\Pi}}
%\nc{\sha}{\scs{\mbox{\cyr X}}} %used to be \cyr
\nc{\sha}{{\mbox{\cyr X}}}  %used to be \cyr
\nc{\shap}{{\mbox{\cyrs X}}} %sha as product
\nc{\shpr}{\diamond}    %Shuffle product
\nc{\shp}{\ast} \nc{\shplus}{\shpr^+}
\nc{\shprc}{\shpr_c}    %Cartier's product
\nc{\msh}{\ast} \nc{\zprod}{m_0} \nc{\oprod}{m_1} \nc{\vep}{\varepsilon} \nc{\labs}{\mid\!} \nc{\rabs}{\!\mid}
\nc{\astarrow}{\overset{\raisebox{-3pt}{$\ast$}}{\rightarrow}}
%==========================================================================

%==========================================================================
%%%%%%%%%%%%%%%%%%%% roman fonts, in alphabetic order
\nc{\dth}{d} \nc{\mmbox}[1]{\mbox{\ #1\ }} \nc{\fp}{\mrm{FP}} \nc{\rchar}{\mrm{char}} \nc{\Fil}{\mrm{Fil}} \nc{\Mor}{Mor\xspace} \nc{\gmzvs}{gMZV\xspace} \nc{\gmzv}{gMZV\xspace} \nc{\mzv}{MZV\xspace} \nc{\mzvs}{MZVs\xspace} \nc{\Hom}{\mrm{Hom}} \nc{\id}{\mrm{id}} \nc{\im}{\mrm{im}} \nc{\incl}{\mrm{incl}} \nc{\map}{\mrm{Map}} \nc{\mchar}{\rm char} \nc{\nz}{\rm NZ}

%=======================================================================
%%%%%%%%%%%%%%%%%% bold face
\nc{\Alg}{\mathbf{Alg}} \nc{\Bax}{\mathbf{Bax}} \nc{\bff}{\mathbf f} \nc{\bfk}{{\bf k}} \nc{\bfone}{{\bf 1}} \nc{\bfx}{\mathbf x} \nc{\bfy}{\mathbf y}
\nc{\base}[1]{\bfone^{\otimes ({#1}+1)}} %{{a_{#1}}}
\nc{\Cat}{\mathbf{Cat}} \delete{}
%\nc{\cat}{\sqsubset}
\nc{\detail}{\marginpar{\bf More detail}
    \noindent{\bf Need more detail!}
    \svp}
\nc{\Int}{\mathbf{Int}} \nc{\Mon}{\mathbf{Mon}}
%\nc{\remark}{\noindent{\bf Remark: }}
\nc{\rbtm}{{shuffle }} \nc{\rbto}{{Rota-Baxter }} \nc{\remarks}{\noindent{\bf Remarks: }} \nc{\Rings}{\mathbf{Rings}} \nc{\Sets}{\mathbf{Sets}}

%=======================================================================
%%%%%%%%%%%%%%%%%%%Bbb fonts
\nc{\BA}{{\Bbb A}} \nc{\CC}{{\Bbb C}} \nc{\DD}{{\Bbb D}} \nc{\EE}{{\Bbb E}} \nc{\FF}{{\Bbb F}} \nc{\GG}{{\Bbb G}} \nc{\HH}{{\Bbb H}} \nc{\LL}{{\Bbb L}} \nc{\NN}{{\Bbb N}} \nc{\KK}{{\Bbb K}} \nc{\QQ}{{\Bbb Q}} \nc{\RR}{{\Bbb R}} \nc{\TT}{{\Bbb T}} \nc{\VV}{{\Bbb V}} \nc{\ZZ}{{\Bbb Z}}

%==========================================================================
%%%%%%%%%%%%%%%%%%% cal fonts

\nc{\cala}{{\mathcal A}} \nc{\calc}{{\mathcal C}} \nc{\cald}{{\mathcal D}} \nc{\cale}{{\mathcal E}} \nc{\calf}{{\mathcal F}} \nc{\calg}{{\mathcal G}} \nc{\calh}{{\mathcal H}} \nc{\cali}{{\mathcal I}} \nc{\call}{{\mathcal L}} \nc{\calm}{{\mathcal M}} \nc{\caln}{{\mathcal N}} \nc{\calo}{{\mathcal O}} \nc{\calp}{{\mathcal P}} \nc{\calr}{{\mathcal R}} \nc{\cals}{{\mathcal S}} \nc{\calt}{{\mathcal T}} \nc{\calw}{{\mathcal W}} \nc{\calk}{{\mathcal K}} \nc{\calx}{{\mathcal X}}
\nc{\calz}{{\mathcal Z}}
 \nc{\CA}{\mathcal{A}}

%==========================================================================
%%%%%%%%%%%%%%%%%%  frak fonts
\nc{\fraka}{{\mathfrak a}} \nc{\frakA}{{\mathfrak A}} \nc{\frakb}{{\mathfrak b}} \nc{\frakB}{{\mathfrak B}} \nc{\frakD}{{\mathfrak D}} \nc{\frakH}{{\mathfrak H}} \nc{\frakM}{{\mathfrak M}} \nc{\bfrakM}{\overline{\frakM}} \nc{\frakm}{{\mathfrak m}} \nc{\frakP}{{\mathfrak P}} \nc{\frakN}{{\mathfrak N}} \nc{\frakp}{{\mathfrak p}} \nc{\frakS}{{\mathfrak S}} \nc{\frakx}{{\mathfrak x}} \nc{\ox}{\bar{\frakx}} \nc{\frakX}{{\mathfrak X}} \nc{\fraky}{{\mathfrak y}} \nc\dop{\delta}
\nc{\Reduce}{{\rm Red}}

\font\cyr=wncyr10 \font\cyrs=wncyr7
%=========================================================================
\nc{\redt}[1]{\textcolor{red}{#1}}
\nc{\ma}[1]{\textcolor{green}{\tt Markus:#1}}
\nc{\li}[1]{\textcolor{red}{\tt Li:#1}} \nc{\sz}[1]{\textcolor{blue}{\tt sz:#1}} \nc{\xg}[1]{\textcolor{purple}{\tt xg:#1}}
%=========================================================================

%%% Added by Markus:

\nc{\nonz}[1]{#1^\times}
\nc{\NNP}{\nonz{\NN}}
\nc{\stdint}[1]{J_{#1}}
\nc{\dualmod}[1]{#1^*}
\nc{\alghom}[1]{#1^\bullet}
\nc{\End}{\mathrm{End}}
\nc{\rng}{\mathrm{\mathcal{R}}}
\nc{\codim}{\mathrm{codim}}
\nc{\evl}{\mathrm{ev}}

\newenvironment{thmenumerate}{\leavevmode\begin{enumerate}[leftmargin=1.5em]}{\end{enumerate}}

%=========================================================================
\begin{document}
\title[Rota-Baxter operators on the polynomial algebra]{Rota-Baxter operators on the polynomial algebras, integration and averaging operators}

\author{Li Guo}
\address{
Department of Mathematics and Computer Science, Rutgers University, Newark, NJ 07102, USA}
\email{liguo@rutgers.edu}

\author{Markus Rosenkranz}
\address{
    School of Mathematics, Statistics and Actuarial Science,
    University of Kent,
    Canterbury CT2 7NF, England}
\email{M.Rosenkranz@kent.ac.uk}

\author{Shanghua Zheng}
\address{Department of Mathematics, Lanzhou University, Lanzhou, Gansu 730000, China}
\email{zheng2712801@163.com}

%========================================================================
\hyphenpenalty=8000
\date{\today}

\begin{abstract}
  Rota-Baxter operators are an algebraic abstraction of integration. Following this classical connection, we study the relationship between Rota-Baxter operators and integrals in the case of the polynomial algebra $\bfk[x]$. We consider two classes of Rota-Baxter operators, monomial ones and injective ones. For the first class, we apply averaging operators to determine monomial Rota-Baxter operators. For the second class, we make use of the double product on Rota-Baxter algebras.
\end{abstract}

\subjclass[2010]{16W99, 45N05, 47G10, 12H20}

\keywords{Rota-Baxter operator, averaging operator, integration, monomial linear operator}

\maketitle

\tableofcontents

\hyphenpenalty=8000 \setcounter{section}{0}

%========================================================================

\section{Introduction}\mlabel{sec:int}

Rota-Baxter operators are deeply rooted in analysis. Their study originated from
the work of G.~Baxter~\cite{Ba} in 1960 on Spitzer's identity~\cite{Sp} in
fluctuation theory. More fundamentally, the notion of Rota-Baxter operator is an
algebraic abstraction of the \emph{integration by parts formula} of
calculus. Throughout the 1960s, Rota-Baxter operators were studied by well-known
analysts such as Atkinson~\cite{At}. In the 1960s and 1970s, the works of Rota
and Cartier~\cite{Ca,Ro1} led the study of Rota-Baxter operators into algebra
and combinatorics. In the 1980s, the Rota-Baxter operator for Lie algebras was
independently discovered by mathematical physicists as the operator form of the
classical Yang-Baxter equation~\cite{STS}. In the late 1990s, the operator
appeared again as a fundamental algebraic structure in the work of Connes and
Kreimer on renormalization of quantum field theory~\cite{CK}. The present
century witnesses a remarkable renaissance of Rota-Baxter operators through
systematic algebraic studies with wide applications to combinatorics, number
theory, operads and mathematical
physics~\cite{Ag,Bai,BBGN,BGN,CK,EGK,EGM,GK1,GZ}. See~\cite{Guw} for a brief
introduction and~\cite{Gub} for a more detailed treatment.

Recently, Rota-Baxter operator related structures, including differential
Rota-Baxter algebras~\cite{GK3} and \emph{integro-differential
  algebras}~\cite{RR}, were introduced in the algebraic study of calculus,
especially in boundary problems for linear differential
equations~\cite{GGZ,GRR}. The upshot is that the Green's operator of such a
boundary problem can be represented by suitable operator rings based on an
integro-differential algebra.

In this paper, we revisit the analysis origin of Rota-Baxter operators to study
how their algebraic properties are linked with their analytic appearance. We
focus on the polynomial algebra $\RR[x]$, which plays a central role both in
analysis where it is taken as approximation of analytic functions, and in
algebra where it is the free object in the category of commutative
algebras. This algebra, together with the standard integral operator, is also
the free commutative Rota-Baxter algebra on the empty set or, in other words,
the initial object in the category of commutative Rota-Baxter algebras. Thus it
provides an ideal testing ground for the interaction between analytically
defined Rota-Baxter operators and the algebraically defined Rota-Baxter
operators.

One natural question in this regard is when an algebraically defined Rota-Baxter
operator on $\RR[x]$ can be realized in analysis. It is a classical fact that
the Riemann integral with variable upper limit is a Rota-Baxter operator of
weight zero on $\RR[x]$. This remains true when the integral operator is
pre-multiplied by any polynomial. We might call these Rota-Baxter operators on
$\RR[x]$ \emph{analytically modelled}. It is easy to see that such operators are
injective. We conjecture that all injective Rota-Baxter operators on $\RR[x]$
are indeed analytically modelled. We provide evidence for this conjecture by
exploring two classes of such operators.

The first comprises what we call \emph{monomial Rota-Baxter operators} over an
arbitrary integral domain~$\bfk$ of characteristic zero, meaning Rota-Baxter
operators $P$ with $P(x^n) = ax^k$, where both $a\in\bfk$ and $k\in\NN$ may
depend on $n$. We classify monomial Rota-Baxter operators on $\bfk[x]$ and show
that all injective monomial Rota-Baxter operators are analytically modelled. The
second class is restricted to~$\bfk=\RR$ and contains those operators that
satisfy a \emph{differential law}~$\partial \circ P = r$, where the right-hand
side denotes the multiplication operator induced by an arbitrarily monomial~$r
\in \RR[x]$. We show that any injective Rota-Baxter operator is of this form
and, provided~$r$ is monomial, analytically modelled.

In Section~\mref{sec:basis} we discuss general algebraic properties of
Rota-Baxter operators that will be used in subsequent sections. In
Section~\mref{sec:mono} we focus on monomial Rota-Baxter operators. While
determining these operators, we prove that all injective monomial Rota-Baxter
operators are analytically modelled. In Section~\mref{sec:inj}, we study
injective Rota-Baxter operators in general (on the real polynomial ring). We
first show that injective Rota-Baxter operators are precisely those that satisfy
a differential law. Then we prove that, in the monomial case, they are
analytically modelled.

% A more systematic approach
% would be to start from the free Rota-Baxter algebra~$F$ on the empty
% set of generators (using the factorial basis, the Rota-Baxter
%   operator is just a shift). Then all other Rota-Baxter algebras
%   appear as homomorphic images, and it remains to study the
%   homomorphisms $f_P$, which are determined by~$f_P(1_F)$.
% \li{This does not really work, since in order to apply the freeness, we first need to have a RBO $P$.}

\section{General concepts and properties}
\mlabel{sec:basis}

\paragraph{\bf Notation} If~$M$ is a monoid we write~$\nonz{M} = \{ x
\in M \mid x \ne 0_M\}$ for the semigroup of nonzero elements. In
particular, the monoid of natural numbers (nonnegative integers) is
denoted by~$\NN$, so~$\NNP$ is the semigroup of positive integers. The
notation~$l \mid k$ signifies that~$l$ is a divisor of~$k$.

We use~$\bfk$ to denote a commutative ring with identity~$1$ unless otherwise
specified. All $\bfk$-algebras in this paper are assumed to be commutative and
with a unit~$1_A$ that will be identified with~$1_\bfk$ through the structure
map~$\bfk \to A$.

We start by collecting some general properties of Rota-Baxter
operators for later use. First we give the definition of a Rota-Baxter
$\bfk$-algebra of arbitrary weight~\cite{Ba,Gub,Ro2}.

\begin{defn}
  Let $\lambda$ be a given element of $\bfk$. A {\bf Rota-Baxter
    $\bfk$-algebra of weight $\lambda$}, or simply an {\bf RBA of
    weight $\lambda$}, is a pair $(R,P)$ consisting of a
  $\bfk$-algebra $R$ and a linear operator $P\colon R\to R$ that
  satisfies the {\bf Rota-Baxter equation}
  \begin{equation}
    P(u)P(v)=P(uP(v))+P(P(u)v)+\lambda P(uv),\quad \text{for all}\, u,v\in R.
    \mlabel{eq:RB}
  \end{equation}
  Then $P$ is called a {\bf Rota-Baxter operator of weight
    $\lambda$}. If $R$ is only assumed to be a nonunitary
  $\bfk$-algebra, we call $R$ a nonunitary Rota-Baxter $\bfk$-algebra
  of weight $\lambda$.
\end{defn}

Observe first that the standard integration operator~$\stdint{0}\colon
\bfk[x] \to \bfk[x]$, given by~$x^n \mapsto x^{n+1}/(n+1)$, is a
(prototypical) Rota-Baxter operator of weight~$0$. Of course the
choice of initialization point is irrelevant, so for any~$a \in \bfk$
there is another weight~$0$ Rota-Baxter operator~$\stdint{a}\colon
\bfk[x] \to \bfk[x]$, given by~$x^n \mapsto (x^{n+1} -
a^{n+1})/(n+1)$. In this paper we shall only be concerned with
the weight~$0$ case, so from now on the term ``Rota-Baxter operator''
is to be understood as ``Rota-Baxter operator of weight~$0$''.

Recall that from a derivation~$\delta$ on a commutative
$\bfk$-algebra~$R$ one can produce a new derivation~$r\delta$ by
postmultiplying with any~$r \in R$. Analogously, from a Rota-Baxter
operator~$P$ on~$R$ one obtains a new Rota-Baxter operator~$Pr$ by
\emph{premultiplying} with any~$r \in R$. Indeed, we have
\begin{equation*}
  (Pr)(u) \, (Pr)(v) = P(ru) \, P(rv) = P(ru \, P(rv)) + P(P(ru)\,
  rv) = (Pr)(u \, (Pr)(v)) + (Pr)((Pr)(u) \, v)
\end{equation*}
for any $u,v\in R$. Applying this to~$R = \bfk[x]$, we obtain the
family~$\stdint{a} r$ of \emph{analytically modelled} Rota-Baxter operators
on~$\bfk[x]$, where~$a \in \bfk$ and~$r \in \bfk[x]$ are arbitrary. As we will
show in Theorem~\ref{thm:main}, in the case of monomials~$r$, this family
exhausts the injective Rota-Baxter operators.

Let $\End(R) := \End_\bfk(R)$ denote the $\bfk$-module of linear operators on
$R$. Then the subset $\rbo(R)$ of $\End(R)$ consisting of Rota-Baxter operators
$P\colon R\to R$ is closed under \emph{multiplications by scalars} $c \in \bfk$
since in that case~$Pc = c P$. In the case of derivations on~$R$ more is true
since they form a $\bfk$-module (in fact a Lie algebra) while in general the sum
of two Rota-Baxter operators is not a Rota-Baxter operator. This motivates the
following terminology.

\begin{defn}
\begin{thmenumerate}
\item We call two Rota-Baxter operators $P_1, P_2 \in \rbo(R)$ {\bf
    compatible} if $c_1P_1+c_2P_2$ are in $\rbo(R)$ for
  all $c_1, c_2\in \bfk$.
\item Let $P \in \rbo(R)$. Then $Q \in \End(R)$ is called {\bf
    consistent} with~$P$ if~$P-Q$ is in $\rbo(R)$.
\item For~$P, Q \in \End(R)$ we define the bilinear
  form~$RB(P,Q)\colon R \otimes R \to R$ by
  \begin{equation*}
    RB(P,Q)(u,v):=P(u)Q(v)-P(uQ(v))-Q(P(u)v), \quad u, v\in R.
  \end{equation*}
  Thus $P \in \rbo(R)$ means that $RB(P,P)=0$ on $R\otimes R$.
\end{thmenumerate}
\end{defn}

Recall that for a Rota-Baxter algebra $(R,P)$, the multiplication
\begin{equation*}
  \star_P: R\otimes R \to R, \quad u\star_P v: = P(u)v+uP(v)  \text{ for all } u, v\in R,
\end{equation*}
is an associative product on $R$, called the {\bf double
  multiplication}~\mcite[Thm.~1.1.17]{Gub}. Moreover, $P\colon (R, \star_P) \to
R$ is then a homomorphism of nonunitary Rota-Baxter algebras.

If~$A$ is a $\bfk$-module, its (linear) \emph{dual} is denoted
by~$\dualmod{A}$. If~$A$ is moreover a $\bfk$-algebra, we use the notation
\begin{equation*}
  \alghom{A} := \{ \phi \in \dualmod{A} \mid  \phi(uv) = \phi(u)
  \phi(v)\}
\end{equation*}
for the set of \emph{multiplicative functionals}. Through the structure
map~$\bfk \to A$ we may also view the elements of~$\dualmod{A}$ as $\bfk$-linear
operators from~$A$ to~$\bfk$, and those of~$\alghom{A}$ as $\bfk$-algebra
homomorphisms from~$A$ to~$\bfk$.

\begin{prop}
\begin{thmenumerate}
\item Two Rota-Baxter operators $P_1, P_2 \in \rbo(R)$ are compatible
  if and only if $RB(P_1,P_2)+RB(P_2,P_1)=0$. This will be the case in
  particular when
\begin{equation*}
  \qquad
  P_1(u)P_2(v)=P_1(u P_2(v))+P_2(P_1(u)v) \quad\text{and}\quad
  P_2(u)P_1(v)=P_2(u P_1(v))+P_1(P_2(u)v)
\end{equation*}
holds for all~$u, v \in R$.
\mlabel{it:comp}
\item Let~$P \in \rbo(R)$ and~$Q \in \End(R)$ be given. Then~$Q$ is
  consistent with~$P$ if and only if
\begin{equation*}
RB(Q,Q) = RB(P,Q)+RB(Q,P).
\end{equation*}
is satisfied.
\mlabel{it:cons}
\item Let $P$ be in $\rbo(R)$. The set of $f \in \dualmod{R}$ that are consistent with~$P$ equals  $\alghom{(R,\star_P)}$.
  \mlabel{it:fun}
\end{thmenumerate}
\mlabel{pp:compcons}
\end{prop}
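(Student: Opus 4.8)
The three parts all follow from elementary manipulation of the form $RB$, once one records an observation that I would state first: besides being bilinear in $u,v$, the map $RB(\cdot,\cdot)$ is bilinear in its two \emph{operator} arguments, i.e.\ $RB(c_1P_1+c_2P_2,Q)=c_1\,RB(P_1,Q)+c_2\,RB(P_2,Q)$ and the analogous identity in the second slot, which is immediate from the defining formula together with the linearity of the operators involved. Combined with the fact (noted in the excerpt) that $P\in\rbo(R)$ is the same as $RB(P,P)=0$, this reduces everything to expanding $RB$ of a linear combination and collecting terms.

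For part \ref{it:comp} I would write
\begin{equation*}
  RB(c_1P_1+c_2P_2,\,c_1P_1+c_2P_2)=c_1^2\,RB(P_1,P_1)+c_1c_2\bigl(RB(P_1,P_2)+RB(P_2,P_1)\bigr)+c_2^2\,RB(P_2,P_2),
\end{equation*}
which collapses to $c_1c_2\bigl(RB(P_1,P_2)+RB(P_2,P_1)\bigr)$ since $P_1,P_2\in\rbo(R)$. Hence $c_1P_1+c_2P_2\in\rbo(R)$ for all $c_1,c_2\in\bfk$ precisely when $RB(P_1,P_2)+RB(P_2,P_1)=0$ — one direction by specializing to $c_1=c_2=1$, the other being clear. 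The ``in particular'' clause is then immediate, the two displayed identities being exactly the assertions $RB(P_1,P_2)=0$ and $RB(P_2,P_1)=0$, whose sum a fortiori vanishes. For part \ref{it:cons}, expanding $RB(P-Q,P-Q)=RB(P,P)-RB(P,Q)-RB(Q,P)+RB(Q,Q)$ and using $RB(P,P)=0$ shows that $P-Q\in\rbo(R)$, i.e.\ that $Q$ is consistent with $P$, if and only if $RB(Q,Q)=RB(P,Q)+RB(Q,P)$.

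Part \ref{it:fun} I would obtain by applying part \ref{it:cons} with $Q=f$, where $f\in\dualmod{R}$ is regarded as an operator with image in $\bfk\cdot 1_R$. The decisive simplification is that a factor $f(w)$ occurring inside an application of $P$ or of $f$ is a scalar and can be pulled out by linearity; this cancels the ``mixed'' terms and leaves
\begin{equation*}
  RB(f,f)(u,v)=-f(u)f(v),\qquad RB(P,f)(u,v)=-f\bigl(P(u)v\bigr),\qquad RB(f,P)(u,v)=-f\bigl(uP(v)\bigr).
\end{equation*}
Substituting these into the criterion of \ref{it:cons} turns it into $f(u)f(v)=f\bigl(P(u)v+uP(v)\bigr)=f(u\star_P v)$ for all $u,v\in R$, which is exactly the statement that $f$ is a multiplicative functional on $(R,\star_P)$, read with the nonunitary convention since $(R,\star_P)$ carries no unit in general; conversely such an $f$ is consistent with $P$.

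I do not expect any genuine obstacle here: the only points needing attention are making the bilinearity of $RB$ in $(P,Q)$ explicit before using it throughout, and carefully tracking the scalar extractions in the computation of the three $RB$-terms in part \ref{it:fun}.
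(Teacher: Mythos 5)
Your proposal is correct and follows essentially the same route as the paper: expanding $RB$ bilinearly in the operator slots for parts (a) and (b), and computing $RB(f,f)$, $RB(P,f)$, $RB(f,P)$ by pulling scalars out of $P$ and $f$ for part (c). The only cosmetic difference is that you state the operator-bilinearity of $RB$ explicitly as a preliminary observation, which the paper uses implicitly.
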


\begin{proof}
  (\mref{it:comp}) For arbitrary~$c_1, c_2 \in \bfk$, the
  bilinear form~$RB(c_1P_1+c_2P_2,c_1P_1+c_2P_2)$
  is given by
$$c_1^2 RB(P_1,P_1)+c_1c_2 (RB(P_1,P_2)+RB(P_2,P_1))+c_2^2 RB(P_2,P_2),$$
which simplifies to $c_1c_2 (RB(P_1,P_2)+RB(P_2,P_1))$ since
$P_1, P_2 \in \rbo(R)$.\smallskip

\noindent
(\mref{it:cons}) Since~$P \in \rbo(R)$ we have
$$RB(P-Q,P-Q)=-RB(P,Q)-RB(Q,P)+RB(Q,Q),$$
and hence the conclusion.\smallskip

\noindent
(\mref{it:fun}) Using that $P$ is a linear operator and $f$ a linear
functional, we have
$$ RB(f,f)=-f(u)f(v), \quad RB(f,P)(u,v)=-f(uP(v)), \quad RB(P,f)(u,v)=-f(P(u)v).$$
Thus by Item~(\mref{it:cons}) we conclude that~$f$ is consistent
with~$P$ if and only if
$$f(u)f(v)=f\big(P(u)v+uP(v)\big) = f(u\star_Pv),$$
which is what we need.
\end{proof}

\section{Monomial Rota-Baxter operators on
  \texorpdfstring{$\bfk[x]$}{bfkx}}
\label{sec:mono}

In this section, we determine the Rota-Baxter operators on $\bfk[x]$ that send
monomials to monomials and determine the analytically modelled ones. Throughout
this section, we assume that~$\bfk$ is an integral domain containing~$\QQ$.

\subsection{General properties}
We first give general criteria for a monomial linear operator to be a
Rota-Baxter operator before specializing in the following sections to the two
cases of nondegenerate and degenerate operators.

\begin{defn}
\begin{thmenumerate}
\item A linear operator $P$ on $\bfk[x]$ is called {\bf monomial} if
  for each $n\in \NN$, we have
\begin{equation}
P(x^n)=\beta(n)x^{\theta(n)}\qquad\text{with}\qquad\beta\colon\NN\to\bfk\quad\text{and}\quad\theta\colon\NN\to\NN.
\mlabel{eq:mono}
\end{equation}
\noindent If $\beta(n)=0$, the value of~$\theta(n)$ does not matter;
by convention we set $\theta(n)=0$ in this case.

\item
A monomial operator is called {\bf degenerate} if $\beta(n) = 0$
for some $n \in \NN$.
\end{thmenumerate}
\mlabel{defn:monrb}
\end{defn}

Let $A $ be a nonempty set and let $B$ be a set containing a distinguished
element $0$. For a map~$\phi\colon A\to B$ we define its {\bf zero set} as
$\nul_\phi:=\{a\in A\,|\, \phi(a)=0\}$ to be the {\bf zero set} of $\phi$.  Then
and its {\bf support} as $\supp_\phi:=A\setminus \nul_\phi$. Thus a monomial linear
operator~$P$ on~$\bfk[x]$ is nondegenerate if and only if
$\nul_\beta=\emptyset$.  As the following lemma shows, for a Rota-Baxter
operator $P$, degeneracy at~$n \in \NN$ occurs whenever~$P$ is constant on the
corresponding monomial.

\begin{lemma}
  Let $P$ be a monomial Rota-Baxter operator on $\bfk[x]$ and let $n \in \NN$. If $P(x^n)$ is in $\bfk$,
  then $P(x^n)=0$. In other words, $\supp_\beta= \supp_\theta$, and hence $\nul_\beta=\nul_\theta$.
\mlabel{lem:zero}
\end{lemma}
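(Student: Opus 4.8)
\emph{Proof idea.} The plan is to read the statement off the Rota-Baxter equation~(\mref{eq:RB}) specialized to the single monomial $x^n$. First I would set $u=v=x^n$ in~(\mref{eq:RB}); since the weight is $0$ and $\bfk[x]$ is commutative, the two ``mixed'' terms on the right coincide and one obtains
$$P(x^n)^2 = 2\,P\bigl(x^n\,P(x^n)\bigr).$$
Now suppose $P(x^n)=c$ with $c\in\bfk$. By $\bfk$-linearity of $P$ the right-hand side equals $2\,P(c\,x^n)=2c\,P(x^n)=2c^2$, so the identity collapses to $c^2=2c^2$, i.e.\ $c^2=0$. Since $\bfk$ is an integral domain, this forces $c=0$, hence $P(x^n)=0$.

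For the reformulation, write $P(x^n)=\beta(n)x^{\theta(n)}$ as in~(\mref{eq:mono}). The inclusion $\supp_\theta\subseteq\supp_\beta$ is immediate from the convention that $\theta(n)=0$ whenever $\beta(n)=0$. For the reverse inclusion, suppose $n\in\supp_\beta$ but $\theta(n)=0$; then $P(x^n)=\beta(n)x^0=\beta(n)\in\bfk$, and the first part forces $\beta(n)=0$, contradicting $n\in\supp_\beta$. Hence $\supp_\beta=\supp_\theta$, and passing to complements in $\NN$ gives $\nul_\beta=\nul_\theta$.

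I do not anticipate any genuine obstacle here: the entire argument is the one-line specialization of~(\mref{eq:RB}) together with the observation that $c^2=2c^2$ forces $c^2=0$. The integral-domain hypothesis on $\bfk$ is exactly what is needed to pass from $c^2=0$ to $c=0$, and the monomiality of $P$ enters only in order to phrase the support and zero-set reformulation; in fact the first assertion holds for every Rota-Baxter operator on $\bfk[x]$.
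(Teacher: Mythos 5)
Your proposal is correct and is essentially the paper's own argument: specializing the weight-zero Rota-Baxter identity to $u=v=x^n$ gives $c^2=2c^2$, hence $c=0$ since $\bfk$ is an integral domain. The added spelling-out of $\supp_\beta=\supp_\theta$ via the convention $\theta(n)=0$ when $\beta(n)=0$ is also exactly what the paper intends.
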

\begin{proof}
  If $P(x^n)=c$ is a nonzero constant, we have
  $$P(x^n)P(x^n)=c^2\neq 2c^2 =2 P(x^nP(x^n)).$$
  Hence $P$ is not a Rota-Baxter operator, and we must have~$c=0$.
\end{proof}

\begin{theorem} Let $P$ be a monomial linear operator on $\bfk[x]$ defined by $P(x^n)=\beta(n)x^{\theta(n)},$ $n \in \NN$. Then $P$ is a Rota-Baxter operator if $\theta$ and  $\beta$ satisfy the following conditions
\begin{thmenumerate}
\item
$\nul_\beta+\theta(\supp_\beta)\subseteq \nul_\beta$\,;
\mlabel{it:gen1}
\item
We have
\begin{align}
&\theta(m)+\theta(n)=\theta(
m+\theta(n))=\theta(\theta(
m)+n),
\mlabel{eq:dege1}\\
&\beta(m)\beta(n)=\beta(m
+\theta(n))\beta(n)+\beta(
n+\theta(m))\beta(m),
\mlabel{eq:dege2}
\end{align}
for all $m,n\in \supp_\beta$\,.
\mlabel{it:gen2}
\end{thmenumerate}
Under the assumption that $\supp_\beta+\theta(\supp_\beta)\subseteq \supp_\beta$, if $P$ is a Rota-Baxter operator then the above conditions hold.
\mlabel{thm:gen}
\end{theorem}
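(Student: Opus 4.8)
The plan is to verify the Rota-Baxter identity $RB(P,P)(x^m, x^n) = 0$ for all $m, n \in \NN$ by expanding both sides in terms of $\beta$ and $\theta$, and then organizing the verification according to whether $m$ and $n$ lie in $\supp_\beta$ or $\nul_\beta$. Writing out the Rota-Baxter equation on the monomial pair $(x^m, x^n)$, the left-hand side is $P(x^m)P(x^n) = \beta(m)\beta(n)\, x^{\theta(m)+\theta(n)}$, while the right-hand side is
\begin{equation*}
  P\bigl(x^m P(x^n)\bigr) + P\bigl(P(x^m) x^n\bigr)
  = \beta(n)\beta(m+\theta(n))\, x^{\theta(m+\theta(n))}
  + \beta(m)\beta(n+\theta(m))\, x^{\theta(n+\theta(m))}.
\end{equation*}
So the identity to be checked is an equality of (sums of) monomials, and it suffices to compare coefficients and exponents.

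First I would handle the sufficiency direction. Suppose \eqref{it:gen1} and \eqref{it:gen2} hold; I want $RB(P,P)(x^m,x^n)=0$ in all four cases. If $m, n \in \supp_\beta$, then \eqref{eq:dege1} makes all three exponents above equal to $\theta(m)+\theta(n)$, and \eqref{eq:dege2} is exactly the statement that the coefficients match; note one must also check that $m+\theta(n)$ and $n+\theta(m)$ lie in $\supp_\beta$ so that the convention $\theta=0$ on $\nul_\beta$ does not interfere — but if, say, $m+\theta(n)\in\nul_\beta$ then $\beta(m+\theta(n))=0$ and that term drops out on both the exponent bookkeeping and the coefficient identity, so it is in fact harmless. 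If $m \in \nul_\beta$ (the case $n\in\nul_\beta$ being symmetric), then $P(x^m)=0$, so the LHS is $0$ and the second RHS term is $0$; the first RHS term is $\beta(n)\beta(m+\theta(n))x^{\theta(m+\theta(n))}$, and since $m\in\nul_\beta$ and $\theta(n)\in\theta(\supp_\beta)$ (using $n\in\supp_\beta$; if also $n\in\nul_\beta$ then $\beta(n)=0$ kills the term), condition \eqref{it:gen1} gives $m+\theta(n)\in\nul_\beta$, hence $\beta(m+\theta(n))=0$ and the term vanishes. This exhausts all cases, so $P$ is a Rota-Baxter operator.

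For the necessity direction, assume $P$ is a Rota-Baxter operator and that $\supp_\beta+\theta(\supp_\beta)\subseteq\supp_\beta$. For $m,n\in\supp_\beta$, the hypothesis guarantees $m+\theta(n), n+\theta(m)\in\supp_\beta$, so all three coefficients $\beta(m)\beta(n)$, $\beta(n)\beta(m+\theta(n))$, $\beta(m)\beta(n+\theta(m))$ are nonzero; in $\bfk[x]$ a nonzero monomial equals a sum of two nonzero monomials only if the two exponents on the right coincide with each other and with the exponent on the left (here one uses that $\bfk$ is an integral domain, so no cancellation of coefficients can occur), which forces \eqref{eq:dege1}, and then comparing coefficients forces \eqref{eq:dege2}. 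For \eqref{it:gen1}, take $a\in\nul_\beta$ and $n\in\supp_\beta$; I must show $a+\theta(n)\in\nul_\beta$. Evaluate the Rota-Baxter equation at $(x^a, x^n)$: since $a\in\nul_\beta$ we get $0 = P(x^a P(x^n)) + P(P(x^a)x^n) = \beta(n)\beta(a+\theta(n))x^{\theta(a+\theta(n))} + 0$, and since $\beta(n)\neq 0$ and $\bfk$ is a domain this yields $\beta(a+\theta(n))=0$, i.e. $a+\theta(n)\in\nul_\beta$, as desired.

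The main obstacle, and the place requiring the most care, is the interplay with the convention $\theta(n)=0$ when $\beta(n)=0$: one has to make sure that when an index such as $m+\theta(n)$ or $n+\theta(m)$ lands outside $\supp_\beta$, the corresponding term genuinely vanishes and does not spuriously contribute a monomial of the "wrong" degree. In the sufficiency direction this is benign (a zero coefficient removes the term entirely), but it means the case analysis must be stated carefully rather than waved through; in the necessity direction this is precisely why the extra hypothesis $\supp_\beta+\theta(\supp_\beta)\subseteq\supp_\beta$ is imposed, since without it the equation at $(x^m,x^n)$ with $m,n\in\supp_\beta$ could a priori be satisfied by one of the right-hand terms vanishing and the exponents never being compared, so \eqref{eq:dege1} could fail. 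Beyond that, the argument is a routine, if slightly tedious, bookkeeping of four cases.
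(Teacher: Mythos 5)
Your proof is correct and follows essentially the same route as the paper: expand the Rota--Baxter identity on the monomial basis and run the four-way case analysis on membership in $\supp_\beta$ versus $\nul_\beta$ for sufficiency, then specialize to $(m,n)\in\nul_\beta\times\supp_\beta$ and $(m,n)\in\supp_\beta\times\supp_\beta$ for necessity. Your explicit check that the convention $\theta=0$ on $\nul_\beta$ cannot spoil the exponent bookkeeping in the case $m,n\in\supp_\beta$ is a point the paper's proof passes over silently, but it does not change the argument.
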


\begin{proof}
Since $P$ is a monomial linear operator on $\bfk[x]$, the Rota-Baxter relation in Eq.~(\mref{eq:RB}) is equivalent to \begin{equation}
\beta(m)\beta(n)x^{\theta(m)+\theta(n)}
  =\beta(m+\theta(n))\beta(n)x^{\theta(m
  +\theta(n))}
  +\beta(\theta(m)+n)\beta(m)x^{\theta(
  \theta(m)+n)}, \quad \text{for all } m, n \in
  \NN.
\mlabel{eq:rbspe}
\end{equation}

Suppose (\ref{it:gen1}) and (\ref{it:gen2}) hold.
Since $\NN$ is the disjoint union of $\nul_\beta$ and $\supp_\beta$, we can verify Eq.~(\mref{eq:rbspe}) by considering the following four cases:
$$m, n\in \nul_\beta\,;\quad  m\in \nul_\beta, n\in \supp_\beta\,;\quad m\in \supp_\beta, n\in \nul_\beta\,;\quad m, n\in \supp_\beta\,.$$
In the first case we have $\beta(m)=\beta(n)=0$.  Thus Eq.~(\mref{eq:rbspe}) holds. In the second case, we have $\beta(m)=0$ and so Eq.~(\mref{eq:rbspe}) becomes $\beta(m+
\theta(n))\beta(n)=0$. Then Eq.~(\mref{eq:rbspe}) follows from Item~(\mref{it:gen1}). The third case can be treated similarly. In the last case, Eq.~(\mref{eq:rbspe}) follows from Eqs.~(\mref{eq:dege1}) and ~(\mref{eq:dege2}).  Thus $P$ is a Rota-Baxter operator on $\bfk[x]$.

Now assume that $\supp_\beta+\theta(\supp_\beta)\subseteq \supp_\beta$ and suppose that $P$ is a Rota-Baxter operator.  Then Eq.~(\mref{eq:rbspe}) holds. Taking $m\in \nul_\beta$ and $n\in \supp_\beta$, we obtain $0=\beta(m+\theta(n))\beta(n)x^{\theta(m+\theta(n))}.$ Since $\beta(n)\neq 0$, we must have $\beta(m+\theta(n))=0$, proving (\mref{it:gen1}). Taking $m, n\in \supp_\beta$, we have $\beta(m+\theta(n))\neq 0$ and $\beta(\theta(m)+n)\neq 0$ by the assumption. Then all the coefficients in Eq.~(\mref{eq:rbspe}) are nonzero. Thus the degrees of the
monomials must be the same; this yields Eq.~(\mref{eq:dege1}), and Eq.~(\mref{eq:dege2}) follows.
\end{proof}

By symmetry, only one of the two identities~(\mref{eq:dege1}) is needed. Note
also that by definition $A + \emptyset = \emptyset$ for any set $A$ so that
$\supp_\beta+\theta(\supp_\beta)\subseteq \supp_\beta$ and
$\nul_\beta+\theta(\supp_\beta)\subseteq \nul_\beta$ are automatic in the
nondegenerate case. Otherwise, we have the following constraint on
$\supp_\beta$.

\begin{lemma}If $P$ is a degenerate monomial Rota-Baxter operator on $\bfk[x]$,  then $\supp_\beta$ is either empty or infinite. The same applies to $\nul_\beta$.
\mlabel{lem:fin-or-cofin}
\end{lemma}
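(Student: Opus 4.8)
The plan is to handle both halves by the same device: drawing numerical consequences from the ``monomial form'' of the Rota-Baxter identity, namely Eq.~(\mref{eq:rbspe}). The proof of Theorem~\mref{thm:gen} shows this identity is equivalent to $P$ being a Rota-Baxter operator, so it holds for \emph{all} $m,n\in\NN$ with no side hypothesis on $\supp_\beta$. Throughout I would use Lemma~\mref{lem:zero} in the form $\supp_\beta=\supp_\theta$; in particular $n\in\supp_\beta$ forces $\theta(n)\geq 1$. In each case the finiteness assumption will be contradicted by evaluating the identity at the \emph{largest} element of the finite set in question.

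For $\supp_\beta$: suppose it is nonempty and finite, and set $M:=\max\supp_\beta$. Taking $m=n=M$ in Eq.~(\mref{eq:rbspe}) and adding the two identical terms on the right gives
$$
\beta(M)^2\, x^{2\theta(M)} = 2\,\beta\bigl(M+\theta(M)\bigr)\,\beta(M)\, x^{\theta(M+\theta(M))}.
$$
Since $\bfk$ is an integral domain and $\beta(M)\neq 0$, the left-hand side is a nonzero monomial, so the coefficient on the right is nonzero; hence $\beta(M+\theta(M))\neq 0$, i.e.\ $M+\theta(M)\in\supp_\beta$. But $\theta(M)\geq 1$, so $M+\theta(M)>M$, contradicting maximality. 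Therefore $\supp_\beta$ is empty or infinite.

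For $\nul_\beta$: suppose it is nonempty and finite, and set $N:=\max\nul_\beta$. Since $\nul_\beta$ is finite, $\supp_\beta=\NN\setminus\nul_\beta$ is nonempty; fix $n\in\supp_\beta$. Taking $m=N$ in Eq.~(\mref{eq:rbspe}) and using $\beta(N)=0$, the left-hand side and the second summand on the right vanish, leaving $\beta(N+\theta(n))\,\beta(n)\, x^{\theta(N+\theta(n))}=0$. As $\beta(n)\neq 0$ and $\bfk$ is a domain, $\beta(N+\theta(n))=0$, i.e.\ $N+\theta(n)\in\nul_\beta$, which forces $N+\theta(n)\leq N$ and so $\theta(n)=0$; this is impossible since $n\in\supp_\beta=\supp_\theta$. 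Hence $\nul_\beta$ is empty or infinite, and since $P$ is degenerate it is in fact infinite. (This computation incidentally reproves the inclusion $\nul_\beta+\theta(\supp_\beta)\subseteq\nul_\beta$ of condition~(\mref{it:gen1}) without the auxiliary hypothesis used in Theorem~\mref{thm:gen}.)

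I do not expect a real obstacle. The only point requiring care is to argue straight from Eq.~(\mref{eq:rbspe}) rather than from the packaged conditions of Theorem~\mref{thm:gen}, whose necessity there was established only under the assumption $\supp_\beta+\theta(\supp_\beta)\subseteq\supp_\beta$, which is exactly what we may not presume here. The convention $\theta(n)=0$ for $n\in\nul_\beta$ causes no trouble, because every term in which a factor $\beta$ is evaluated on $\nul_\beta$ simply vanishes and the value of $\theta$ on $\nul_\beta$ is never used.
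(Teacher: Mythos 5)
Your proof is correct and follows essentially the same route as the paper: for each set, evaluate Eq.~(\mref{eq:rbspe}) at its maximal element (with $m=n=M$ for $\supp_\beta$, and $m=N$, $n\in\supp_\beta$ for $\nul_\beta$), use $\supp_\beta=\supp_\theta$ from Lemma~\mref{lem:zero} to get a strictly larger element of the same set, and contradict maximality. Your one deviation is an improvement in care rather than in substance: the paper cites Theorem~\mref{thm:gen}(\mref{it:gen1}) for $\nul_\beta+\theta(\supp_\beta)\subseteq\nul_\beta$ even though that item is formally stated under the hypothesis $\supp_\beta+\theta(\supp_\beta)\subseteq\supp_\beta$, whereas you rederive the inclusion directly from Eq.~(\mref{eq:rbspe}), where it indeed needs no such hypothesis.
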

\begin{proof}
Suppose  $\supp_\beta\neq \emptyset$ and
$|\supp_\beta|=t<\infty$.  Then we may assume that
$$\supp_\beta=\{ m_i\in\NN\,|\, 1\leq i\leq t, m_1<\cdots<m_t\}.$$
By Eq.~(\mref{eq:rbspe}),
we have $\beta(m_t)^2=2\beta(m_t)\beta(m_t+
\theta(m_t))$.  Since $\beta(m_t)\neq 0$, we have $\beta(m_t)=2\beta(m_t+\theta(m_t))$, and so  $\beta(m_t+\theta(m_t))\neq 0$. Thus $m_t+\theta(m_t)$ is in $ \supp_\beta.$  By Lemma~\mref{lem:zero}, we have $\theta(m_t)\geq 1$. Then $m_t+\theta(m_t)>m_t$, a contradiction. Thus either $\supp_\beta=\emptyset$ or $|\supp_\beta|=\infty$.

On the other hand, let $\nul_\beta\neq \emptyset$. If $\nul_\beta=\NN$, then it is certainly infinite. If $\nul_\beta\neq \NN$, then take $k\in \supp_\beta$. Since $\supp_\theta = \supp_\beta$ by Lemma~\mref{lem:zero}, we have $\theta(k)>0$. By Theorem~\mref{thm:gen}(\mref{it:gen1}), we have $\nul_\beta+\theta(k)\subseteq \nul_\beta$. This implies that $\nul_\beta$ is infinite.
\end{proof}

We now give a general setup for constructing monomial Rota-Baxter operators on
$\bfk[x]$. This setup will be applied in Section~\mref{ss:nond} to construct
nondegenerate monomial Rota-Baxter operators and in Section~\mref{ss:dege} to
construct degenerate monomial Rota-Baxter operators.

\begin{theorem}
Let $\supp$ be a subset of \,$\NN$.
\begin{thmenumerate}
\item Let the maps $\theta:\supp\to \nonz{\NN}$ and $ \beta:\supp\to
  \nonz{\bfk}$ satisfy the following conditions.
\begin{thmenumerate}
\item We have
$\supp +\theta(\supp)\subseteq \supp$ and~$\NN\setminus \supp +\theta(\supp)\subseteq \NN\setminus \supp$.
\item
The equations~(\mref{eq:dege1}) and~(\mref{eq:dege2})
are fulfilled for all $m, n\in \supp$.
\end{thmenumerate}
Extend $\theta$ and $\beta$ to~$\NN$ by defining $\theta(n)=0$ and $\beta(n)=0$ for $n\in \NN\setminus \supp$.
Then $P:\bfk[x]\to\bfk[x]$ defined by $P(x^n)=\beta(n)x^{\theta(n)}, n\in \NN,$ is a Rota-Baxter operator on $\bfk[x]$.
\mlabel{it:betagen1}
\item
Let $\theta:\supp\to \nonz{\NN}$ satisfy Eq.~(\mref{eq:dege1}) and \,$\NN\setminus \supp +\theta(\supp)\subseteq \NN\setminus \supp$. Extend $\theta$ to $\NN$ by defining $\theta(n)=0$  for $n\in \NN\setminus \supp$. For any $c\in \nonz{\bfk}$,
define $\beta:\NN\to \bfk$ by
\begin{equation}
\beta(n)=\left\{\begin{array}{ll} c/\theta(n), & n\in \supp, \\
0, & n\not\in \supp. \end{array} \right .
\mlabel{eq:betagen}
\end{equation}
Then $P:\bfk[x]\to\bfk[x]$ defined by $P(x^n)=\beta(n)x^{\theta(n)}$ is a Rota-Baxter operator on $\bfk[x]$.
\mlabel{it:betagen2}
\end{thmenumerate}
\mlabel{thm:betagen}
\end{theorem}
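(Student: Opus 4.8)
The plan is to obtain both parts directly from Theorem~\mref{thm:gen}. For part~(\mref{it:betagen1}), the first step is to record what the extension by zero does to the invariants appearing in Theorem~\mref{thm:gen}: since $\theta$ takes values in $\nonz{\NN}$ on $\supp$ and $\beta$ takes values in $\nonz{\bfk}$ on $\supp$, after extending both by zero outside $\supp$ we have $\supp_\beta=\supp_\theta=\supp$ and $\nul_\beta=\NN\setminus\supp$. Under this translation, the hypothesis $\NN\setminus\supp+\theta(\supp)\subseteq\NN\setminus\supp$ is exactly condition~(\mref{it:gen1}) of Theorem~\mref{thm:gen}, while the hypothesis that~(\mref{eq:dege1}) and~(\mref{eq:dege2}) hold for all $m,n\in\supp$ is exactly condition~(\mref{it:gen2}). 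So Theorem~\mref{thm:gen} applies and $P$ is a Rota-Baxter operator; the remaining hypothesis $\supp+\theta(\supp)\subseteq\supp$ is not needed for this implication, but it is what ensures that all arguments of $\beta$ occurring in~(\mref{eq:dege2}) again lie in $\supp$.

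For part~(\mref{it:betagen2}), the strategy is to check the hypotheses of part~(\mref{it:betagen1}) for the specific $\beta$ given by~(\mref{eq:betagen}) and then invoke it. Two preliminary observations are needed. First, $\supp+\theta(\supp)\subseteq\supp$ follows automatically from~(\mref{eq:dege1}): for $m,n\in\supp$ it gives $\theta(m+\theta(n))=\theta(m)+\theta(n)$, which is a positive integer, so $m+\theta(n)$ cannot lie in $\NN\setminus\supp$, where $\theta$ has been set to $0$; the same argument with the second identity in~(\mref{eq:dege1}) gives $\theta(m)+n\in\supp$. Second, the $\beta$ of~(\mref{eq:betagen}) does send $\supp$ into $\nonz{\bfk}$, because each $\theta(n)$ with $n\in\supp$ is a positive integer, hence invertible in $\bfk$ as $\QQ\subseteq\bfk$, so $c/\theta(n)\neq 0$ whenever $c\neq 0$ since $\bfk$ is a domain. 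It then remains to verify~(\mref{eq:dege2}) for $m,n\in\supp$. Substituting $\beta(k)=c/\theta(k)$, using the first observation to know $m+\theta(n),\theta(m)+n\in\supp$, and using~(\mref{eq:dege1}) to evaluate $\theta(m+\theta(n))=\theta(\theta(m)+n)=\theta(m)+\theta(n)$, equation~(\mref{eq:dege2}) collapses to the elementary identity $\frac1{\theta(m)}+\frac1{\theta(n)}=\frac{\theta(m)+\theta(n)}{\theta(m)\theta(n)}$, which holds in $\bfk$ because all the relevant natural numbers are units there. With~(\mref{eq:dege1}), (\mref{eq:dege2}), and both set inclusions verified, part~(\mref{it:betagen1}) applies and yields the claim.

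I do not expect a genuine obstacle: the mathematical content is entirely in Theorem~\mref{thm:gen}, and both parts of Theorem~\mref{thm:betagen} are essentially reformulations of it. The only points that require a little care are the bookkeeping identification of the hypotheses with conditions~(\mref{it:gen1}) and~(\mref{it:gen2}) after the extension by zero, the derivation of $\supp+\theta(\supp)\subseteq\supp$ from~(\mref{eq:dege1}) in part~(\mref{it:betagen2}), and the systematic use of $\QQ\subseteq\bfk$ to make sense of the quotients $c/\theta(n)$ and to carry out the partial-fraction computation verifying~(\mref{eq:dege2}).
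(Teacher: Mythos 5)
Your proposal is correct and follows essentially the same route as the paper: part (a) is a direct translation of the hypotheses into conditions (\mref{it:gen1}) and (\mref{it:gen2}) of Theorem~\mref{thm:gen}, and part (b) reduces to the same partial-fraction verification of Eq.~(\mref{eq:dege2}) using Eq.~(\mref{eq:dege1}) to rewrite $\theta(m+\theta(n))$ and $\theta(\theta(m)+n)$ as $\theta(m)+\theta(n)$. Your added bookkeeping (identifying $\supp_\beta=\supp$ after extension, deriving $\supp+\theta(\supp)\subseteq\supp$ from Eq.~(\mref{eq:dege1}), and noting the role of $\QQ\subseteq\bfk$) only makes explicit what the paper leaves implicit.
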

\begin{proof}
(\mref{it:betagen1}) This follows from Theorem~\mref{thm:gen}.

(\mref{it:betagen2}) Under the assumption, we have for $m, n\in \supp$ that
\begin{align*}
& \beta(m+\theta(n))\beta(n) +\beta(\theta(m)+n)\beta(m)
= \frac{c^2}{\theta(m+\theta(n))\theta(n)} +\frac{c^2}{\theta(\theta(m)+n)\theta(m)}\\
&\quad= \frac{c^2}{(\theta(m)+\theta(n))\theta(n)} +\frac{c^2}{(\theta(m)+\theta(n))\theta(m)}\\
&\quad= \frac{c}{\theta(m)}\frac{c}{\theta(n)}\\
&\quad= \beta(m)\beta(n).
\end{align*}
Thus $\theta$ and~$\beta$ satisfy the conditions in Theorem~\mref{thm:gen} for $P$ to be a Rota-Baxter operator on $\bfk[x]$.
\end{proof}

\subsection{Nondegenrate case}
\mlabel{ss:nond}

As mentioned earlier, for a nondegenerate monomial linear operator $P$ on $\bfk[x]$, the
conditions $\supp_\beta+\theta(\supp_\beta)\subseteq \supp_\beta$ and
$\nul_\beta+\theta(\supp_\beta)\subseteq \nul_\beta$ are automatic.  Thus we
obtain the following characterization of nondegenerate monomial Rota-Baxter operators from Theorems~\mref{thm:gen} and
\mref{thm:betagen}.

\begin{coro}
\begin{thmenumerate}
\item Let~$P$ be a nondegenerate monomial linear operator on~$\bfk[x]$
  as in Eq.~(\mref{eq:mono}). Then~$P$ is a Rota-Baxter operator if and only if the
  sequences~$\theta$ and~$\beta$ satisfy the equations~(\mref{eq:dege1}) and~(\mref{eq:dege2})
  for all~$m, n \in \NN$. In this case, $\theta(n)\neq 0$ for all $n
  \in \NN$.  \mlabel{it:bt1}
\item If a sequence~$\theta\colon \NN\to \NN$ is nonzero and
  satisfies Eq.~(\mref{eq:dege1}), then for any $c\in \nonz{\bfk}$,
 the map
$\beta\colon \NN\to \bfk$ given by $\beta(n):=c/\theta(n)$
  satisfies Eq.~(\mref{eq:dege2}) and hence gives a Rota-Baxter operator on
  $\bfk[x]$.  \mlabel{it:bt2}
\end{thmenumerate}
\mlabel{coro:bt}
\end{coro}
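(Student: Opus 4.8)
The plan is to read off both parts directly from Theorems~\ref{thm:gen} and~\ref{thm:betagen}, the only extra ingredient being Lemma~\ref{lem:zero}; the nondegeneracy hypothesis is exactly what is needed to make every set-theoretic side condition in those results trivially true.

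For part~(\ref{it:bt1}) I would begin by recording that, by definition, $P$ being nondegenerate means $\nul_\beta=\emptyset$, equivalently $\supp_\beta=\NN$. Hence $\nul_\beta+\theta(\supp_\beta)=\emptyset+\theta(\NN)=\emptyset\subseteq\nul_\beta$ and $\supp_\beta+\theta(\supp_\beta)=\NN+\theta(\NN)\subseteq\NN=\supp_\beta$ hold without any condition on $\theta,\beta$. So hypothesis~(\ref{it:gen1}) of Theorem~\ref{thm:gen} is automatic, and the standing assumption $\supp_\beta+\theta(\supp_\beta)\subseteq\supp_\beta$ required for the converse half of that theorem is in force. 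The two directions of Theorem~\ref{thm:gen} then say precisely: if $\theta,\beta$ satisfy~(\ref{eq:dege1}) and~(\ref{eq:dege2}) for all $m,n\in\NN$ (which here coincides with $\supp_\beta$) then $P\in\rbo(\bfk[x])$, and conversely. This is the asserted equivalence. Finally, if $P$ is a Rota-Baxter operator then Lemma~\ref{lem:zero} gives $\supp_\theta=\supp_\beta=\NN$, so $\theta(n)\in\NNP$ for every $n\in\NN$.

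For part~(\ref{it:bt2}) I would invoke Theorem~\ref{thm:betagen}(\ref{it:betagen2}) with $\supp=\NN$. Here ``$\theta$ nonzero'' is to be read as $\theta\colon\NN\to\NNP$ --- this is what makes the prescription $\beta(n)=c/\theta(n)$ meaningful, and by part~(\ref{it:bt1}) it is in any event forced once $P$ turns out to be a Rota-Baxter operator. With this reading the hypotheses of Theorem~\ref{thm:betagen}(\ref{it:betagen2}) hold: $\theta$ satisfies~(\ref{eq:dege1}) by assumption, and $\NN\setminus\supp+\theta(\supp)=\emptyset\subseteq\NN\setminus\supp$ is vacuous. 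The theorem therefore produces the Rota-Baxter operator $P(x^n)=\beta(n)x^{\theta(n)}$; since $\beta(n)=c/\theta(n)\neq 0$ for all $n$ (as $c\in\nonz{\bfk}$ and $\theta(n)\neq 0$), $P$ is nondegenerate, so part~(\ref{it:bt1}) also yields that $\beta$ satisfies~(\ref{eq:dege2}). Alternatively, the identity~(\ref{eq:dege2}) for $\beta(n)=c/\theta(n)$ is exactly the short computation already carried out in the proof of Theorem~\ref{thm:betagen}(\ref{it:betagen2}).

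I do not expect a genuine obstacle here: the corollary is a specialization in which all the combinatorial conditions of Theorems~\ref{thm:gen} and~\ref{thm:betagen} collapse. The only points requiring a little care are the bookkeeping with the empty-set convention ($\emptyset+A=\emptyset$), the correct interpretation of ``nonzero'' for~$\theta$, and the single appeal to Lemma~\ref{lem:zero} needed to pass from ``$P$ is a Rota-Baxter operator'' to ``$\theta$ never vanishes''; no computation beyond those in the cited theorems is needed.
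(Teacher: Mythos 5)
Your proof is correct and takes essentially the same route as the paper, which likewise obtains the corollary by specializing Theorems~\ref{thm:gen} and~\ref{thm:betagen} to the case $\supp_\beta=\NN$, where the set-theoretic side conditions become automatic, and uses Lemma~\ref{lem:zero} to conclude $\theta(n)\neq 0$. Your observation that ``nonzero'' in part~(\ref{it:bt2}) must be read as $\theta(n)\neq 0$ for every $n$ (so that $c/\theta(n)$ is defined) is a useful clarification that the paper leaves implicit.
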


Equation~(\mref{eq:dege1}) characterizes~$\theta$ as an
averaging operator defined as follows.

\begin{defn}
\begin{thmenumerate}
\item
A map $\theta: S\to S$ on a semigroup $S$ is called an {\bf averaging operator} if
$$ \theta(m \theta(n))=\theta(m)\theta(n) \quad \text{ for all } m, n\in S.$$
\item
A linear map $\Theta:R\to R$ on a $\bfk$-algebra $R$ is called an {\bf averaging operator} if $\Theta$ is an averaging operator on the multiplicative semigroup of $R$.
\end{thmenumerate}
\end{defn}

The study of averaging operators can be tracked back to Reynolds and Birkhoff~\cite{Bi,Re}.
 We refer the reader to \cite{GP} and the references therein for further details.

By Corollary~\mref{coro:bt}, a nondegenerate monomial operator $P$ on $\bfk[x]$ is a Rota-Baxter operator if and only if the map $\theta$ is an averaging operator on the semigroup~$(\NN, +)$, and the
corresponding $\bfk$-linear operator~$\Theta\colon x^n \mapsto
x^{\theta(n)}$ makes~$(\bfk[x], \Theta)$ into an averaging algebra
We write~$\mathcal{A}$ for the set of all
nondegenerate averaging operators, i.e.\@ sequences~$\theta\colon \NN
\to \nonz{\NN}$ satisfying Eq.~(\mref{eq:dege1}).
We describe~$\mathcal{A}$ as the first step to determine nondegenerate monomial Rota-Baxter operators on $\bfk[x]$. We denote the free semigroup
over~$\nonz{\NN}$ by~$S(\nonz{\NN})$, so the elements~$\sigma \in
S(\nonz{\NN})$ are \emph{finite sequences}~$(\sigma_0, \cdots,
\sigma_{d-1})$ of positive numbers having any length~$d > 0$.

\begin{theorem}
  \label{thm:theta-char}
  There is a bijective correspondence~$\Phi\colon \mathcal{A} \to
  S(\nonz{\NN})$ given by
  \begin{equation*}
    \Phi(\theta) = \Big(\theta(0), \cdots, \theta(d-1)\Big)\,\Big/\,d
    \quad\text{with}\quad
    d := \min \, \{j \in \nonz{\NN} \mid \theta(r+j) = \theta(r) + j
    \;\text{for all $r \in \NN$}\}
  \end{equation*}
  whose inverse maps~$\sigma:=(\sigma_0, \cdots, \sigma_{d-1}) \in S(\nonz{\NN})$ to the
  map~$\theta\colon \NN \to \nonz{\NN}$ defined by~$\theta(n) = (\ell+\sigma_j)\,d$
  for~$n = \ell d + j$ with~$\ell \in \NN$ and~$0 \le j < d$. Moreover, we
  have~$\im(\theta) = d \NN_{\ge s}$ for~$s := \min(\sigma)$.
\end{theorem}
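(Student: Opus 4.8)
The plan is to extract from the averaging identity~\eqref{eq:dege1} the set of ``shift periods'' of~$\theta$, prove that it is a principal subsemigroup~$d\NNP$ of~$\NNP$, and then read off a normal form for~$\theta$ that makes both~$\Phi$ and the stated inverse formula transparent. The only genuinely non-routine step will be the first structural observation about periods; everything afterwards is forced.

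\emph{The period set.} Introduce~$D := \{\, j \in \nonz{\NN} \mid \theta(r+j) = \theta(r)+j \text{ for all } r \in \NN \,\}$, so that the~$d$ in the definition of~$\Phi$ is~$\min D$. Reading~\eqref{eq:dege1} as~$\theta(m+\theta(n)) = \theta(m)+\theta(n)$ for all~$m$, with~$n$ fixed, shows at once that~$\theta(n) \in D$ for every~$n \in \NN$; in particular~$D \ne \emptyset$, so~$d := \min D$ is well defined. Iterating the defining relation shows~$D$ is closed under addition, and it is also closed under proper differences: if~$j_1 > j_2$ lie in~$D$, then combining the~$j_2$-shift at~$r+j_1-j_2$ with the~$j_1$-shift at~$r$ gives~$\theta(r+(j_1-j_2)) = \theta(r)+(j_1-j_2)$. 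Reducing an arbitrary~$j \in D$ modulo~$d$ and invoking minimality forces~$d \mid j$, so~$D = d\NNP$. Hence~$d \mid \theta(n)$ for all~$n$, so~$\sigma_j := \theta(j)/d$ lies in~$\nonz{\NN}$ for~$0 \le j < d$; and since~$d \in D$, writing~$n = \ell d + j$ with~$\ell \in \NN$ and~$0 \le j < d$ gives~$\theta(n) = \theta(j) + \ell d = (\ell+\sigma_j)\,d$. This establishes that~$\Phi$ is well defined and that the formula of the statement is the correct candidate for~$\Phi^{-1}$.

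\emph{The bijection.} Conversely, given~$\sigma = (\sigma_0,\dots,\sigma_{d-1}) \in S(\nonz{\NN})$, define~$\theta$ by that formula; it is visibly~$\nonz{\NN}$-valued, and writing~$m = \ell d + j$,~$n = k d + i$ with~$0 \le i,j < d$, a one-line substitution shows that both~$\theta(m+\theta(n))$ and~$\theta(\theta(m)+n)$ equal~$(\ell+k+\sigma_i+\sigma_j)\,d = \theta(m)+\theta(n)$, so~$\theta \in \mathcal{A}$. That the two assignments are mutually inverse is now bookkeeping. In one direction, the normal form just derived shows that applying the inverse formula to~$\Phi(\theta)$ reconstructs~$\theta$. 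In the other, let~$\theta$ be the operator built from~$\sigma$ and let~$D_\theta$ be its period set; one checks~$d \in D_\theta$ directly, and the only delicate point is that~$d = \min D_\theta$. If some~$e$ with~$0 < e < d$ lay in~$D_\theta$, then~$\theta(e) = \theta(0) + e = \sigma_0 d + e$, while the defining formula gives~$\theta(e) = \sigma_e\,d$, so~$e = (\sigma_e - \sigma_0)\,d$, which is impossible for~$0 < e < d$. Thus~$\Phi(\theta) = (\theta(0),\dots,\theta(d-1))/d = \sigma$.

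\emph{The image.} Finally, as~$n = \ell d + j$ runs over~$\NN$ the pair~$(\ell,j)$ runs independently over~$\NN \times \{0,\dots,d-1\}$, so~$\im(\theta) = d\cdot\{\, \ell + \sigma_j \mid \ell \in \NN,\ 0 \le j < d \,\} = d\,\NN_{\ge s}$ with~$s = \min_j \sigma_j = \min(\sigma)$, using that the minimum~$s$ is attained and that adding~$\ell \in \NN$ then sweeps out every integer~$\ge s$. As indicated, the crux of the argument is the structural claim about~$D$; once the shift periods are known to form a difference-closed subsemigroup of~$\NNP$ containing every value of~$\theta$, the normal form, the bijection, and the image computation all follow mechanically.
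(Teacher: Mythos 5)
Your proof is correct and follows essentially the same route as the paper: your period set $D$ is exactly the set of periods of the paper's auxiliary map $\tilde{\theta}=\theta-\id_{\NN}$, and in both arguments the key point is that every value $\theta(n)$ is a shift period, hence a multiple of the minimal period $d$, which yields the normal form $\theta(\ell d+j)=(\ell+\sigma_j)d$ and then the bijection and image computation. Your explicit difference-closure argument for $D=d\NNP$ just makes precise a divisibility fact the paper uses implicitly.
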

\begin{proof}
First consider~$\theta \in \mathcal{A}$. Defining the
  map~$\tilde{\theta} := \theta - \id_{\NN}\colon \NN \to \ZZ$, one
  obtains from Eq.~(\mref{eq:dege1}) that~$\tilde{\theta}(m+\theta(n))
  = \tilde{\theta}(m)$ for all~$m,n \in \NN$. Hence~$\tilde{\theta}$
  is periodic, and~$d$ is well-defined as the primitive period
  of~$\tilde{\theta}$. Since every~$\theta(n)$ is also a period
  of~$\tilde{\theta}$, this implies~$\im(\theta) \subseteq d
  \nonz{\NN}$ so that the given map~$\Phi\colon \mathcal{A} \to
  S(\nonz{\NN})$ is well-defined.

  Next let us write~$\Psi$ for the assignment~$\sigma \mapsto \theta$
  defined above. By checking Eq.~(\mref{eq:dege1}) one sees that this
  yields a well-defined map~$\Psi\colon S(\nonz{\NN}) \to
  \mathcal{A}$.

 Now we prove~$\Phi \circ \Psi = \id_{S(\nonz{\NN})}$, so
  let~$\theta\colon \NN \to \nonz{\NN}$ be the map defined as above by
  a given sequence~$(\sigma_0, \cdots, \sigma_{d-1}) \in
  S(\nonz{\NN})$. Since~$\tilde{\theta}(n) = \sigma_j d - j$ for~$n =
  \ell d+j$, we see that~$d$ is a period of the
  map~$\tilde{\theta}$. Assume~$d$ is greater than its primitive
  period~$d'$. Then we have~$d = kd'$ for~$k > 1$, and
  \begin{equation*}
    \sigma_0 kd' = \theta(0) = \tilde{\theta}(0) = \tilde{\theta}(d')
    = \sigma_{d'} d - d' = (\sigma_{d'} k  - 1) \, d'
  \end{equation*}
  implies~$\sigma_0 k = \sigma_{d'} k  - 1$, which contradicts~$k >
  1$. We conclude that~$d$ is the primitive period
  of~$\tilde{\theta}$, so the definition of~$\Phi$ recovers the
  correct value of~$d$. Moreover, for~$j=0, \cdots, d-1$ we
  have~$\theta(j) = \sigma_j d$, which implies~$\Phi(\theta) = \sigma$
  as required.

  It remains to prove the converse relation~$\Psi \circ \Phi =
  \id_{\mathcal{A}}$. Taking an arbitrary~$\theta \in \mathcal{A}$, we must
  prove that it coincides with the sequence~$\theta'$ defined by~$\theta'(\ell
  d+j) = (\ell+\theta(j)/d) \, d = \ell d+\theta(j)$ for any~$\ell \in \NN$
  and~$0 \le j < d$. For these values we must then show that~$\theta(\ell d+j) =
  \ell d + \theta(j)$, which is equivalent to~$\tilde{\theta}(\ell d+j) =
  \tilde{\theta}(j)$. The latter is ensured since we know that~$\tilde{\theta}$
  has primitive period~$d$.

  As noted above, $\im(\theta) \subseteq d \nonz{\NN}$ so~$\theta/d\colon \NN
  \to \nonz{\NN}$ is well-defined. We must show~$\im(\theta/d) = \NN_{\ge
    s}$. The inclusion from left to right follows since~$(\theta/d)(\ell d+j) =
  \ell + \sigma_j \ge \sigma_j \ge s$. Now let~$n \ge s$ be given and
  write~$s=\sigma_j$ for some~$j = 0, \cdots, d-1$. Then~$\ell := n - \sigma_j
  \in \NN$ is such that~$(\theta/d)(\ell d+j)=n$, which established the
  inclusion from right to left.
\end{proof}

As sequences, the relation between~$\theta\colon \NN \to \nonz{\NN}$
and~$\sigma\colon \{0, \cdots, d-1\} \to \nonz{\NN}$ can be written as~$\theta/d
= (\sigma, \sigma+1, \sigma+2, \cdots)$, where~$1, 2, \cdots$ designate constant
sequences of length~$d$. More precisely, we have
$$ \theta/d=(\sigma_0,\cdots,\sigma_{d-1},\sigma_0+1,\cdots,\sigma_{d-1}+1,\cdots).$$

Theorem~\mref{thm:theta-char} yields the following construction algorithm for the map $\theta$ from a
nondegenerate monomial Rota-Baxter operator.
\begin{algorithm}
Every sequence~$\theta\colon
\NN\to \NN$ corresponding to a nondegenerate monomial Rota-Baxter operator on~$\bfk[x]$ can be generated as follows:
\begin{enumerate}
\item Let $d\in \nonz{\NN}$ be given. For each $j=0 \dots d-1$ fix
  $\sigma_j\in\nonz{\NN}$;
\item For~$n\in \NN$ with $n=\ell d + \md{n}$ where $\md{n}\in\{0 \dots d-1\}$
  is the remainder of $n$ modulo $d$, define
$$\theta(n) := n+\sigma_{\md{n}}\,d-\md{n}=\ell d +\sigma_{\md{n}}\,d.$$
\end{enumerate}
\mlabel{al:theta}
\end{algorithm}

\noindent We consider two \emph{extreme cases} of Algorithm~\mref{al:theta} of particular interest:
\begin{description}
\item[Case 1] If~$d=1$ one can only choose $\theta(0)\neq 0$ so that~$\theta(n)=n+\theta(0)$ for all $n \in \NN$.
\item[Case 2] For~$d>1$ and $\sigma_j=1, 0\leq j\leq d-1$ we have ~$\theta(n) = n + d - \md{n}=(\ell+1)d$
  with~$n=\ell d+\md{n}$.
\end{description}

\begin{exam}
  Setting~$d=2$ and $\sigma_0=\sigma_1=1$, we choose the sequence~$\beta$
  according to Corollary~\mref{coro:bt}(\mref{it:bt2}) with~$c=2$.  Then the
  $\bfk$-linear map $P\colon \bfk[x]\rightarrow \bfk[x]$ by
$$
P (x^{2k})= \frac{ x^{2k+2}}{k+1}\quad \text{and} \quad P(x^{2k+1})=
\frac{x^{2k+2}}{k+1}$$ is a nondegenerate  Rota-Baxter
operator on~$\bfk[x]$.
\end{exam}

We determine next all~$\beta$ for the sequences~$\theta$ coming from the above
two extreme cases.

\begin{theorem}
\begin{thmenumerate}
\item Let $d=1$ with $\theta(n)=n+k$ for some $k\in\nonz{\NN}$.  Then $\beta\colon \NN \to \bfk$ satisfies Eq.~(\mref{eq:dege2}) if
  and only if $\beta(n)=\frac{c}{\theta(n)}$ for some $c\in\nonz{\bfk}$.
  \mlabel{it:fircase}
\item Let $d>1$ be given with $\theta(n)=n+d-\md{n}$. Then $\beta\colon \NN \to
  \bfk$ satisfies Eq.~(\mref{eq:dege2}) if and only if it is defined
  as follows: Fix $c_j\in \nonz{\bfk}$ and assign $\beta(j):=1/c_j$ for $0\leq
  j\leq d-1$.  Then for any $n\in \NN$ with $n=\ell d+\md{n}$ define
  $\beta(n)=\frac{\beta(\md{n})}{\ell+1}.$ \mlabel{it:seccase}
\end{thmenumerate}
\mlabel{thm:gbeta}
\end{theorem}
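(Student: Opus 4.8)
The plan is to settle the two parts by short functional-equation arguments, noting that part~(b) reduces to part~(a). Throughout I use that $\beta$ is nowhere zero — the prescribed $\theta$ has no zeros, so neither does $\beta$ by the convention in Definition~\ref{defn:monrb} — and that, $\bfk$ being an integral domain containing $\QQ$, every positive integer is a unit of $\bfk$; hence I may divide freely by nonzero values of $\beta$, working in the fraction field of $\bfk$ when convenient.

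For part~(a), the ``if'' direction is immediate from Corollary~\ref{coro:bt}(\ref{it:bt2}), since $\theta(n)=n+k$ is nowhere zero and satisfies Eq.~(\ref{eq:dege1}) (its three terms all equal $m+n+2k$). For ``only if'' I would first note that $m+\theta(n)=n+\theta(m)=m+n+k$, so Eq.~(\ref{eq:dege2}) collapses to
\[
\beta(m)\beta(n)=\beta(m+n+k)\bigl(\beta(m)+\beta(n)\bigr),\qquad m,n\in\NN .
\]
Dividing by $\beta(m)\beta(n)\beta(m+n+k)$ and writing $\gamma:=1/\beta$ turns this into the additive-type relation $\gamma(m+n+k)=\gamma(m)+\gamma(n)$. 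Specializing $m=0$ gives $\gamma(n+k)=\gamma(n)+\gamma(0)$, and comparing the two yields $\gamma(m+n)=\gamma(m)+\gamma(n)-\gamma(0)$, so $\gamma-\gamma(0)$ is additive on $(\NN,+)$ and thus $\gamma(n)=\gamma(0)+n\delta$ with $\delta:=\gamma(1)-\gamma(0)$. The normalization $\gamma(k)=2\gamma(0)$ (the case $m=n=0$) forces $k\delta=\gamma(0)$, whence $\gamma(n)=\gamma(0)(n+k)/k=\gamma(0)\theta(n)/k$ and therefore $\beta(n)=k\beta(0)/\theta(n)=c/\theta(n)$ with $c:=k\beta(0)\in\nonz\bfk$ — an element of $\bfk$, since $\theta(n)$ is a unit.

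For part~(b) I would write $n=\ell d+j$ with $\ell\in\NN$ and $0\leq j<d$, so that $\theta(n)=(\ell+1)d$. For ``if'', substituting $\beta(\ell d+j)=\beta(j)/(\ell+1)$ into Eq.~(\ref{eq:dege2}) with $m=\ell_1d+j_1$ and $n=\ell_2d+j_2$ — for which $m+\theta(n)=(\ell_1+\ell_2+1)d+j_1$ and $n+\theta(m)=(\ell_1+\ell_2+1)d+j_2$ — collapses both sides to $\beta(j_1)\beta(j_2)/\bigl((\ell_1+1)(\ell_2+1)\bigr)$, using $1/(\ell_1+1)+1/(\ell_2+1)=(\ell_1+\ell_2+2)/\bigl((\ell_1+1)(\ell_2+1)\bigr)$, and the resulting $\beta$ has no zeros since each $\beta(j)=1/c_j\neq0$. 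For ``only if'', the crucial observation is that it suffices to impose Eq.~(\ref{eq:dege2}) within a single residue class: with $m=\ell_1d+j$ and $n=\ell_2d+j$ (the same $j$), dividing by $\beta(m)\beta(n)$ yields, for $g_j(\ell):=1/\beta(\ell d+j)$,
\[
g_j(\ell_1+\ell_2+1)=g_j(\ell_1)+g_j(\ell_2),\qquad \ell_1,\ell_2\in\NN ,
\]
which is precisely the equation treated in part~(a) with $k=1$; hence $g_j(\ell)=(\ell+1)g_j(0)$, i.e.\ $\beta(\ell d+j)=\beta(j)/(\ell+1)$, so $\beta$ is freely determined by the nonzero values $\beta(0),\dots,\beta(d-1)$ — the asserted description with $c_j:=1/\beta(j)$.

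I do not expect a genuine conceptual obstacle; the main things to be careful about are bookkeeping — checking that the divisions and the detour through the fraction field produce identities that really hold in $\bfk$ (using $\QQ\subseteq\bfk$ and that $\bfk$ is a domain) — and, in part~(b), recording that once $\beta$ has the claimed shape the mixed-residue instances ($j_1\neq j_2$) of Eq.~(\ref{eq:dege2}) hold automatically, exactly as the ``if'' computation verifies, so that no relations are imposed on $\beta(0),\dots,\beta(d-1)$.
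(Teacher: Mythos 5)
Your proof is correct. It shares with the paper the key device of passing to the reciprocal $\gamma=1/\beta$ (the paper does this explicitly in part (b), via Eq.~(\ref{eq:gamma})), but the execution differs in two useful ways. In part (a) the paper specializes Eq.~(\ref{eq:dege2}) to $m=0$ and runs the recursion (\ref{eq:indu}), which produces $\beta(n+1)$ from $\beta(n+1-k)$; since that step needs $n+1\ge k$, it only ever reaches the multiples of $k$, and for $k\ge 2$ the values $\beta(1),\dots,\beta(k-1)$ are genuinely not determined by the $m=0$ instances alone. Your route --- reading the full Eq.~(\ref{eq:dege2}) as the Cauchy-type law $\gamma(m+n+k)=\gamma(m)+\gamma(n)$, deducing that $\gamma-\gamma(0)$ is additive on $(\NN,+)$, and then using $\gamma(k)=2\gamma(0)$ to fix the slope --- determines $\gamma$ on all of $\NN$ simultaneously and so covers those residues; this is an improvement in completeness, not just style. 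In part (b) the paper pairs $m=0$ against a general $n$ and recurses, whereas you pair two exponents in the same residue class mod $d$ and reduce to part (a) with $k=1$; that is a tidy structural economy, and your closing observation that the mixed-residue instances of Eq.~(\ref{eq:dege2}) impose no relations among $\beta(0),\dots,\beta(d-1)$ (as the ``if'' computation confirms) is exactly what upgrades the computation to the asserted equivalence. Two small housekeeping points: your appeal to the convention of Definition~\ref{defn:monrb} to get $\beta$ nowhere zero is consistent with the nondegenerate setting of this subsection (the paper's own proof assumes $\beta(0)\in\nonz{\bfk}$ silently), and the detour through the fraction field is harmless since $\QQ\subseteq\bfk$ makes $n+k$ and $\ell+1$ units of $\bfk$, so the final formulas for $\beta$ do land in $\bfk$.
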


\begin{proof}
(\mref{it:fircase}) For a
$\theta$ of the given form, by Eq.~(\mref{eq:dege2}), we have
\begin{equation}
\beta(n)\beta(0)=\beta(n+k)(\beta(0)+\beta
(n)).
\mlabel{eq:injnon}
\end{equation}
Set $\beta(0):=a$ for some $a\in\nonz{\bfk}$ and write $c:=ka$. Then  $\beta(0)=\frac{c}{k}$ and $c$ is in $\nonz{\bfk}$. We next prove $\beta(n)=\frac{c}{n+k}$ by induction on $n\geq 0$. The base case $n=0$ is true. Assume $\beta(n)=\frac{c}{n+k}$ has been proved for $n\geq 0$. By Eq.~(\mref{eq:injnon}), we obtain
\begin{equation}
\beta(n+1-k)\beta(0)=\beta(n+1)(\beta(
0)+\beta(n+1-k)).
\mlabel{eq:indu}
\end{equation}
Since $k\geq 1$, we have $n+1-k\leq n$. By the induction hypothesis, we get $\beta(n+1-k)=\frac{c}{n+1}$.
Then by Eq.~(\mref{eq:indu}) we have
$$\beta(n+1)=\frac{\frac{c^2}{k(n+1)}}{
\frac{c}{k}+\frac{c}{n+1}}=\frac{c}{n+1+k}.
$$
This completes the induction. Thus $\beta(n)=\frac{c}{\theta(n)}$  for some $c\in\nonz{\bfk}$ and all $n\in\NN$.

\noindent The converse follows from Theorem~\mref{coro:bt}(\mref{it:bt2}).
\smallskip

\noindent
(\mref{it:seccase})
Taking $\gamma(n)=1/\beta(n)$, Eq.~(\mref{eq:dege2}) is equivalent to
  \begin{equation}
    \label{eq:gamma}
    \frac{\gamma(m)}{\gamma(m+\theta(n))} + \frac{\gamma(n)}{\gamma(\theta(m)+n)}
    = 1.
  \end{equation}
  Thus we just need to show that, for a fixed sequence $\theta$ in the theorem,
  a sequence $\gamma:\NN\to \bfk$ satisfies Eq.~(\mref{eq:gamma}) if and only if
  $\gamma$ is defined by $\gamma (n)=(\ell +1) \gamma (\md{n})$ if $n=\ell d +
  \md{n},$ where the $\gamma(\md{n})\in \nonz{\bfk}$ for~$\md{n} \in \{0 \dots
  d-1\}$ are arbitrarily preassigned.

\noindent
$(\Longrightarrow)$
Take $m=0$ and $n=\ell d$ with $\ell\geq 0$ in Eq.~(\mref{eq:gamma}). After simplifying we obtain
$$ \gamma ((\ell +1)d)=\gamma (\ell d)+\gamma (0).$$
Then by an induction on $\ell$, we obtain
\begin{equation}
\gamma(\ell d)=(\ell+1)\gamma(0).
\mlabel{eq:md}
\end{equation}
Next note that for $n=\ell d +\md{n}$ we have
\begin{equation}
\theta (n)=\ell d + d.
\mlabel{eq:thetan}
\end{equation}
Then for $j\in \{0,\cdots,d-1\}$, taking $m=0$ and $n=\ell d+j$ in Eq.~(\mref{eq:gamma}) we obtain

$$1= \frac{\gamma(0)}{\gamma(\theta(\ell d+j ))}+\frac{\gamma(\ell d+j)}{\gamma(\theta(0)+ \ell d+j )}
= \frac{\gamma(0)}{\gamma(\ell d +d)} + \frac{\gamma(\ell d+j)}{\gamma (d+\ell d+j)}\,.
$$
This gives
$$ \gamma ((\ell +1)d+j)= \frac{\ell+2}{\ell+1} \gamma (\ell d+j)$$
and recursively yields
$$ \gamma (\ell d+j)=(\ell+1)\gamma (j).$$

\smallskip

\noindent
$(\Longleftarrow)$ Conversely, suppose a sequence $\beta$ is given by $\gamma
(n)=(\ell +1) \gamma (\md{n})$ if $n=\ell d + \md{n},$ for preassigned
$\gamma(\md{n})$ as specified above.  Then for any $m, n\in \NN$ with
$m=kd+\md{m}$ and $n=\ell d+\md{n}$, by Eq.~(\mref{eq:thetan}) we obtain
\begin{eqnarray*}
    \frac{\gamma(m)}{\gamma(m+\theta(n))} + \frac{\gamma(n)}{\gamma(\theta(m)+n)}
&=&
\frac{\gamma(kd+\md{m})}{\gamma(kd+\md{m} +\theta(\ell d +\md{n}))} + \frac{\gamma(\ell d+\md{n})}{\gamma(\theta(kd+\md{m}) +\ell d+\md{n}) }\\
&=&
\frac{\gamma(kd+\md{m})}{\gamma(kd+\md{m} +\ell d +d)} + \frac{\gamma(\ell d+\md{n})}{\gamma(kd+d +\ell d+\md{n}) }\\
&=&
\frac{(k+1)\gamma(\md{m})}{(k+\ell +2)\gamma(\md{m})} + \frac{(\ell +1)\gamma(\md{n})}{(k+\ell+2)\gamma(\md{n})} = 1.
\end{eqnarray*}
This is Eq.~(\mref{eq:gamma}).
\end{proof}

In the special case of \emph{polynomial sequences}~$\theta\colon \NN
\to \NN$ and~$\alpha = 1/\beta\colon \NN \to \bfk$, the range of
possibilities can be drastically narrowed down.

\begin{theorem}
%\sz{We should delete the first sentence since this assumption occurs at beginning of this section.}
  Suppose $\bfk$ is a field containing $\QQ$.  Let $P\colon
  \bfk[x]\rightarrow \bfk[x]$ be a nondegenerate monomial linear
  operator with $P(x^n)=\frac{1}{\alpha(n)}x^{\theta(n)}$ for
  $n\in\NN$, and assume $\theta(n)$ as well as $\alpha(n)$ are
  polynomials. Then $P$ is a Rota-Baxter operator if and only if
\begin{equation}
  \theta(n)=n+k\qquad\text{and}
  \qquad\alpha(n)=c(n+k)
  \mlabel{eq:ta}
\end{equation}
for some~$k \in \NNP$ and some~$c\in\nonz{\bfk}$.
\label{thm:polycase}
\end{theorem}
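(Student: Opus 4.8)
The plan is to deduce the statement from two structural results already established. Corollary~\mref{coro:bt}(\mref{it:bt1}) says that a nondegenerate monomial~$P$ is a Rota-Baxter operator precisely when~$\theta$ satisfies the averaging identity~\eqref{eq:dege1} and~$\beta := 1/\alpha$ satisfies~\eqref{eq:dege2}, and that in this case~$\theta$ is nowhere zero; Theorem~\mref{thm:gbeta}(\mref{it:fircase}) says that for~$\theta(n)=n+k$ the possible~$\beta$ are exactly~$\beta(n)=c/\theta(n)$. So I would first use the polynomiality of~$\theta$ together with~\eqref{eq:dege1} to force~$\theta(n)=n+k$, and then read off the shape of~$\alpha$ from Theorem~\mref{thm:gbeta}.

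\emph{Forward direction.} Assume~$P$ is a Rota-Baxter operator, so by Corollary~\mref{coro:bt}(\mref{it:bt1}) equations~\eqref{eq:dege1} and~\eqref{eq:dege2} hold and~$\theta(n)\neq 0$ for all~$n$; in particular~$k:=\theta(0)\in\NNP$. Setting~$m=0$ in~\eqref{eq:dege1} gives~$\theta(n+k)=\theta(n)+k$ for all~$n$ (this is also part of Theorem~\mref{thm:theta-char}), so~$\tilde\theta := \theta - \id_\NN$ has period~$k$. Since~$\theta(n)$ is a polynomial in~$n$ and~$\bfk$ has characteristic zero, $\tilde\theta$ is a polynomial; as~$\tilde\theta(X)-\tilde\theta(X+k)$ vanishes on all of~$\NN$ it is the zero polynomial, and comparing leading coefficients in characteristic zero forces~$\deg\tilde\theta=0$. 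Hence~$\tilde\theta\equiv\tilde\theta(0)=k$, that is,~$\theta(n)=n+k$. Now Theorem~\mref{thm:gbeta}(\mref{it:fircase}) (with~$d=1$) gives~$\beta(n)=c'/(n+k)$ for some~$c'\in\nonz{\bfk}$, so~$\alpha(n)=1/\beta(n)=(n+k)/c'=c\,(n+k)$ with~$c:=1/c'\in\nonz{\bfk}$. Note that this route never uses the polynomiality of~$\alpha$, which comes out automatically.

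\emph{Converse.} Suppose~$\theta(n)=n+k$ with~$k\in\NNP$ and~$\alpha(n)=c\,(n+k)$ with~$c\in\nonz{\bfk}$. Since~$\QQ\subseteq\bfk$ and~$n+k\geq 1$, we have~$\alpha(n)\neq 0$, so~$P$ is a well-defined nondegenerate monomial operator with~$\beta(n)=1/\alpha(n)=c^{-1}/(n+k)=c^{-1}/\theta(n)$. The sequence~$\theta$ is nowhere zero and satisfies~\eqref{eq:dege1}, because~$\theta(m)+\theta(n)$, $\theta(m+\theta(n))$ and~$\theta(\theta(m)+n)$ all equal~$m+n+2k$. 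Therefore Corollary~\mref{coro:bt}(\mref{it:bt2}), applied with the nonzero constant~$c^{-1}$, shows that~$\beta$ satisfies~\eqref{eq:dege2}, so~$P$ is a Rota-Baxter operator.

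The only ingredient beyond bookkeeping is the elementary fact that a periodic polynomial over a field of characteristic zero is constant; this is exactly what rigidifies the averaging operator~$\theta$ into the form~$n+k$, after which the result is a direct consequence of Corollary~\mref{coro:bt} and Theorem~\mref{thm:gbeta}. Accordingly I anticipate no genuine obstacle.
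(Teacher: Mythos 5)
Your proof is correct, but the forward direction takes a genuinely different route from the paper's. The paper works entirely on the diagonal $m=n$ of Eqs.~(\mref{eq:dege1}) and~(\mref{eq:dege2}): it derives the characteristic relation $2\theta(n)=\theta(\theta(n)+n)$, rules out $\deg\theta\geq 2$ by comparing degrees, writes $\theta(n)=sn+k$ and solves to get $s=1$; then for $\alpha$ it derives the recursion $2\alpha(n)=\alpha(2n+k)$ and compares leading coefficients to force $\deg\alpha=1$, so it makes essential use of the hypothesis that $\alpha$ is polynomial. You instead set $m=0$ to obtain $\theta(n+k)=\theta(n)+k$, use the elementary fact that a periodic polynomial over a field of characteristic zero is constant (this is the periodicity observation that also underlies Theorem~\mref{thm:theta-char}), and then delegate the determination of $\beta=1/\alpha$ entirely to Theorem~\mref{thm:gbeta}(\mref{it:fircase}). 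Both arguments are complete; yours has the nice by-product of showing that the polynomiality assumption on $\alpha$ is redundant, since it follows automatically once $\theta(n)=n+k$ is established, whereas the paper's argument is more self-contained in that it does not rely on Theorem~\mref{thm:gbeta}. The converse direction is the same in both treatments: verify Eq.~(\mref{eq:dege1}) for $\theta(n)=n+k$ and apply Corollary~\mref{coro:bt}(\mref{it:bt2}).
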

\begin{proof}
  By Corollary~\mref{coro:bt}, the operator~$P$ defined by Eq.~(\mref{eq:ta})
  is a Rota-Baxter operator. So we just need to show that any
  Rota-Baxter operator given by Eq.~(\mref{eq:mono}) with polynomial
  sequences~$\theta(n)$ and~$\alpha(n)$ must satisfy the
  conditions in Eq.~(\mref{eq:ta}). Since~$P$ is a Rota-Baxter operator,
  Eq.~(\mref{eq:dege1}) gives the characteristic
  relation~$2\theta(n)=\theta(\theta(n)+n)$. But~$\theta$ and~$\alpha$
  are polynomials with $\deg \theta$ and $\deg \alpha$
  respectively. Checking degrees, let us first assume~$\deg\theta \geq
  2$. In this case we have
  $$\deg 2\theta=\deg \theta<
  (\deg\theta)^2=\deg\theta(\theta(n)+n),$$ which contradicts the
  characteristic relation. Thus we have $\deg\theta\leq 1$, and we can
  write $\theta(n)=sn+k$ for some~$s,k\in\NN$. Now the characteristic
  relation becomes~$2(sn+k) = s(sn+n+k)+k$ or
  equivalently~$(sn+k)(s-1)=0$. If~$s\neq 1$ we obtain~$sn+k=0$ for
  all~$n \in \NN$. But then $s=k=0$, and~$P$ is the zero operator,
  which contradicts the hypothesis that~$P$ is
  nondegenerate. Therefore~$s=1$ and hence~$\theta(n)=n+k$ as claimed
  in Eq.~(\mref{eq:ta}).

  For deriving the second condition of Eq.~(\mref{eq:ta}), we
  specialize Eq.~(\mref{eq:dege2}) to obtain $2\alpha(n)=
  \alpha(\theta(n)+n)$ and hence the
  recursion~$2\alpha(n)=\alpha(2n+k)$. Set~$\ell = \deg\alpha$ and
  suppose the leading coefficient of~$\alpha$
  is~$c\in\nonz{\bfk}$. Now taking leading coefficients of the
  recursion, we get~$2c=2^{\ell}c$ and thus~$\ell=1$. This means we
  can write~$\alpha(n)=c n+c_0$ for some~$c \in
  \nonz{\bfk}$ and~$c_0 \in\bfk$. Substituting this into the
  recursion leads to $2(c n+c_0)=c(2n+k)+c_0$ and
  hence~$\alpha(n)=c(n+k)$ as claimed in Eq.~(\mref{eq:ta}). It
  remains to show that~$k \ne 0$. But this follows
  because~$P(1)=x^k/c k$ so that necessarily~$c k\neq0$.
\end{proof}

Next we investigate injective monomial Rota-Baxter operators and show them to be
analytically modelled. We note first that if $P$ is degenerate, then there
exists $n_0\in\NN$ such that $\beta(n_0)=0$, and then $P(x^{n_0})=0$. Thus
$\ker(P)\neq \{0\}$ and $P$ is not injective. Thus any injective monomial
Rota-Baxter operator is nondegenerate.

\begin{theorem}
Let~$P$ be a monomial Rota-Baxter operator on~$\bfk[x]$. The
following statements are equivalent.
\begin{thmenumerate}
\item
The operator $P$ is injective. \mlabel{it:inj}
\item
The $\theta$ as in Eq.~(\mref{eq:mono}) from $P$ satisfies
$\theta(n)=n+k$ for some $k\in\nonz{\NN}$.
\mlabel{it:trans}
\item There are $k\in\nonz{\NN}$ and $c\in \nonz{\bfk}$ such that $P(x^n)=
  c\int_0^xt^{n+k-1}dt$ and hence $P=c\stdint{0}x^{k-1}.$ \mlabel{it:int}
\end{thmenumerate}
\mlabel{thm:injint}
\end{theorem}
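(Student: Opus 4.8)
The plan is to prove the cyclic implications $(\mref{it:inj}) \Rightarrow (\mref{it:trans}) \Rightarrow (\mref{it:int}) \Rightarrow (\mref{it:inj})$. Two of these are quick. For $(\mref{it:int}) \Rightarrow (\mref{it:inj})$, the multiplication operator $x^{k-1}$ is injective on the integral domain $\bfk[x]$, the formal integration $\stdint{0}$ is injective because $\bfk\supseteq\QQ$, and $c\in\nonz{\bfk}$; hence the composite $P=c\,\stdint{0}\,x^{k-1}$ is injective. (One may equally observe that $P$ sends the basis $\{x^n\}_{n\in\NN}$ to the family $\{\frac{c}{n+k}x^{n+k}\}_{n\in\NN}$ of pairwise distinct monomials with nonzero coefficients.) For $(\mref{it:trans}) \Rightarrow (\mref{it:int})$, note that $\theta(n)=n+k$ with $k\in\nonz{\NN}$ never vanishes, so $P$ is nondegenerate by the convention of Definition~\mref{defn:monrb} together with Lemma~\mref{lem:zero}; since $\theta(n)=n+k$ trivially satisfies Eq.~(\mref{eq:dege1}), Corollary~\mref{coro:bt}(\mref{it:bt1}) forces $\beta$ to satisfy Eq.~(\mref{eq:dege2}), and then Theorem~\mref{thm:gbeta}(\mref{it:fircase}) yields $\beta(n)=c/\theta(n)=c/(n+k)$ for some $c\in\nonz{\bfk}$. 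Hence $P(x^n)=\frac{c}{n+k}x^{n+k}=c\int_0^x t^{n+k-1}\,dt$, i.e.\ $P=c\,\stdint{0}\,x^{k-1}$.

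The substantive implication is $(\mref{it:inj}) \Rightarrow (\mref{it:trans})$. As observed just before the theorem, an injective monomial Rota-Baxter operator is nondegenerate; thus $\theta\in\mathcal{A}$ and $\beta$ is nowhere zero. The first step is that for such a $P$, injectivity of $P$ is equivalent to injectivity of the sequence $\theta$: pairwise distinct monomials with nonzero coefficients are linearly independent, while any collision $\theta(n_1)=\theta(n_2)$ with $n_1\neq n_2$ produces the nonzero kernel element $\beta(n_2)x^{n_1}-\beta(n_1)x^{n_2}$. The second step is to feed $\theta$ into the classification of Theorem~\mref{thm:theta-char} (equivalently Algorithm~\mref{al:theta}): with the associated period $d$ and sequence $(\sigma_0,\dots,\sigma_{d-1})$ one has $\theta(\ell d+j)=(\ell+\sigma_j)d$. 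The key point is that $d>1$ is incompatible with injectivity of $\theta$: assuming $d\geq 2$, pick any solution of $\ell_1+\sigma_0=\ell_2+\sigma_1$ over $\NN$ (for instance $\ell_1=\sigma_1$, $\ell_2=\sigma_0$) and set $n_1=\ell_1 d$, $n_2=\ell_2 d+1$; these are distinct because their residues modulo $d$ are $0$ and $1$, yet $\theta(n_1)=(\ell_1+\sigma_0)d=(\ell_2+\sigma_1)d=\theta(n_2)$, a contradiction. Hence $d=1$, and then Theorem~\mref{thm:theta-char} (this is Case~1 of Algorithm~\mref{al:theta}) gives $\theta(n)=n+\theta(0)$ with $k:=\theta(0)\in\nonz{\NN}$, which is $(\mref{it:trans})$.

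I expect the only genuine obstacle to be this exclusion of $d>1$ in the last implication; everything else is bookkeeping built on the description of $\mathcal{A}$ in Theorem~\mref{thm:theta-char} and the determination of $\beta$ in Theorem~\mref{thm:gbeta}, both already available.
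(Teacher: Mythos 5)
Your proposal is correct and follows essentially the same route as the paper: the cycle $(a)\Rightarrow(b)\Rightarrow(c)\Rightarrow(a)$, with $(a)\Rightarrow(b)$ obtained by feeding $\theta$ into the classification of Theorem~\ref{thm:theta-char}/Algorithm~\ref{al:theta} and exhibiting a collision of $\theta$ when $d>1$, and $(b)\Rightarrow(c)$ via Theorem~\ref{thm:gbeta}(\ref{it:fircase}). Your only departures are cosmetic: you make explicit that injectivity of $P$ forces injectivity of $\theta$ (the paper leaves this implicit) and you choose a symmetric collision $n_1=\sigma_1 d$, $n_2=\sigma_0 d+1$ in place of the paper's $\theta\bigl((\sigma_0-\sigma_1)d+1\bigr)=\theta(0)$ after a harmless normalization $\sigma_0\ge\sigma_1$.
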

\begin{proof}
  (\mref{it:inj}) $\Rightarrow$ (\mref{it:trans}): Assume that $P$ is an
  injective monomial Rota-Baxter operator. Then $P$ is nondegenerate. By
  Algorithm~\mref{al:theta}, there are $d\geq 1$ and $\sigma_j\in \nonz{\NN}$
  for $j\in \{0,\cdots,d-1\}$ such that $\theta(n)=\ell d +\sigma_{\md{n}} d$
  where~$n = \ell d + \md{n}$ and~$\md{n}$ is the remainder of $n$ modulo
  $d$. Suppose $d>1$. Without loss of generality, we may assume $\sigma_0\geq
  \sigma_1$ so that~$n:=(\sigma_0-\sigma_1)d+1 > 0$. Since $\theta(0)=\sigma_0
  d$ we have
$$\theta(n)=\theta((\sigma_0-\sigma_1)d+1)=(\sigma_0-\sigma_1)d+\sigma_1 d=\theta(0),$$
hence $\theta$ is not injective. This forces $d=1$. Then
by the first case considered after Algorithm~\mref{al:theta}, we
have $\theta(n)=n+k$ for fixed $k\geq 1$.
\smallskip

\noindent (\mref{it:trans}) $\Rightarrow$ (\mref{it:int}): For a
$\theta$ of the given form, by Theorem~\mref{thm:gbeta}(\mref{it:fircase}), we have
$\beta(n)=c/\theta(n)$ for some $c\in\nonz{\bfk}$. Thus
$$P(x^n)=\beta(n)x^{\theta(n)}=\frac{c}{n+k}
x^{n+k}=c\int_0^xt^{n+k-1}dt,$$ as needed.
\smallskip

\noindent (\mref{it:int}) $\Rightarrow$ (\mref{it:inj}): Since
$P(x^n)=c\int_0^xt^{n+k-1}dt = \frac{c}{n+k} x^{n+k}$ for all
$n\in\NN$, the operator $P$ is injective.
\end{proof}

\subsection{Degenerate case}
\mlabel{ss:dege}
We next apply Theorem~\mref{thm:betagen} to construct degenerate monomial Rota-Baxter operators on $\bfk[x]$ when $\supp_\beta$ is either $k\NN$ where $k\geq 1$ or is $\NN\setminus (k\NN)$ where $k\geq 2$.

\begin{prop}
  Let $P(x^n)=\beta(n)x^{\theta(n)}, n\in \NN,$ define a monomial linear
  operator on $\bfk[x]$ such that $\supp_\beta=k\NN$ for some $k>0$.  Then $P$
  is a Rota-Baxter operator on $\bfk[x]$ if and only
  if~$\theta(km)=\tilde{\theta}(m)\in \nonz{ \supp_\beta}$ and
  $\beta(km)=\tilde{\beta}(m), m\geq 0$ for maps $\tilde{\theta}:\NN\to \NN$ and
  $\tilde{\beta}:\NN\to \bfk$ that satisfy the following equations
  \begin{align}
&\tilde{\theta}(m_1)+\tilde{\theta}(m_2) =\tilde{\theta}(m_1+\frac{1}{k}\tilde{\theta}(m_2))
=\tilde{\theta}(\frac{1}{k}\tilde{\theta}
(m_1)+m_2),
\mlabel{eq:tildb1}\\
&\tilde{\beta}(m_1)\tilde{\beta}(m_2)
=\tilde{\beta}(m_1
+\frac{1}{k}\tilde{\theta}(m_2))\tilde{\beta}(m_2)+\tilde{\beta}(
m_2+\frac{1}{k}\tilde{\theta}(m_1))\tilde{\beta}(m_1)
\quad\text{ for all } m_1,m_2\in\NN.
\mlabel{eq:tildb2}
\end{align}

 \mlabel{pp:monog}
\end{prop}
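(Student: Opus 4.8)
The plan is to reduce both directions to results already established in this section, after one preliminary observation about how $\theta$ acts on $k\NN$. Throughout write $\supp_\beta = k\NN$; by Lemma~\mref{lem:zero} we also have $\supp_\theta = \supp_\beta = k\NN$, so $\theta(km)\ne 0$ for all $m\in\NN$.

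For the ``only if'' direction, assume $P$ is a Rota-Baxter operator. The first step is to show $\theta(km)\in k\NN$ for every $m\in\NN$: specializing Eq.~(\mref{eq:rbspe}) to $m=n=km'$ gives
$$\beta(km')^2\,x^{2\theta(km')} = 2\,\beta\bigl(km'+\theta(km')\bigr)\,\beta(km')\,x^{\theta(km'+\theta(km'))};$$
since $\bfk$ is an integral domain containing $\QQ$ and $\beta(km')\ne 0$, the left-hand side is a nonzero monomial, which forces $\beta(km'+\theta(km'))\ne 0$, hence $km'+\theta(km')\in k\NN$ and so $\theta(km')\in k\NN$; together with $\theta(km')\ne 0$ this gives $\theta(km')\in\nonz{\supp_\beta}$. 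Now define $\tilde\theta(m):=\theta(km)$ and $\tilde\beta(m):=\beta(km)$. Since $\theta(\supp_\beta)\subseteq k\NN=\supp_\beta$, the hypothesis $\supp_\beta+\theta(\supp_\beta)\subseteq\supp_\beta$ of Theorem~\mref{thm:gen} is met, so Eqs.~(\mref{eq:dege1}) and~(\mref{eq:dege2}) hold for all $m,n\in k\NN$. Writing $m=km_1$, $n=km_2$ and using $\theta(km_i)=\tilde\theta(m_i)\in k\NN$ to rewrite $\theta\bigl(km_1+\theta(km_2)\bigr)=\theta\bigl(k(m_1+\frac{1}{k}\tilde\theta(m_2))\bigr)=\tilde\theta\bigl(m_1+\frac{1}{k}\tilde\theta(m_2)\bigr)$, and symmetrically for the other argument, turns Eq.~(\mref{eq:dege1}) into Eq.~(\mref{eq:tildb1}) and Eq.~(\mref{eq:dege2}) into Eq.~(\mref{eq:tildb2}).

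For the ``if'' direction, suppose $\tilde\theta\colon\NN\to\NN$ and $\tilde\beta\colon\NN\to\bfk$ satisfy Eqs.~(\mref{eq:tildb1}) and~(\mref{eq:tildb2}) with $\theta(km)=\tilde\theta(m)\in\nonz{\supp_\beta}$ and $\beta(km)=\tilde\beta(m)$; note that $\supp_\beta=k\NN$ forces every $\tilde\beta(m)\ne 0$, so that $\theta$ and $\beta$ restricted to $k\NN$ take nonzero values. I would verify the hypotheses of Theorem~\mref{thm:betagen}(\mref{it:betagen1}) with $\supp=k\NN$ and $\theta,\beta$ extended by zero off $k\NN$. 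Condition~(a) holds because $\theta(\supp_\beta)=\{\tilde\theta(m)\mid m\in\NN\}\subseteq k\NN$, whence $\supp_\beta+\theta(\supp_\beta)\subseteq k\NN+k\NN=k\NN=\supp_\beta$, while for $j\in\NN\setminus k\NN$ the sum $j+\tilde\theta(m)$ is still $\not\equiv 0\pmod k$ and so lies in $\NN\setminus k\NN$. Condition~(b), namely Eqs.~(\mref{eq:dege1}) and~(\mref{eq:dege2}) for $m,n\in k\NN$, is obtained by reading the substitution $m=km_1$, $n=km_2$ of the previous paragraph in reverse. Hence $P$ is a Rota-Baxter operator. (Alternatively one may check conditions~(\mref{it:gen1}) and~(\mref{it:gen2}) of Theorem~\mref{thm:gen} directly, its ``if'' part requiring no hypothesis on $\supp_\beta+\theta(\supp_\beta)$.)

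Apart from the appeals to Theorems~\mref{thm:gen} and~\mref{thm:betagen}, the only non-routine step is the $m=n$ specialization of the Rota-Baxter relation yielding $\theta(k\NN)\subseteq k\NN$, which is precisely what makes the term $\frac{1}{k}\tilde\theta$ in Eqs.~(\mref{eq:tildb1})--(\mref{eq:tildb2}) meaningful. I do not anticipate a genuine obstacle; the main point to watch is that the auxiliary hypothesis $\supp_\beta+\theta(\supp_\beta)\subseteq\supp_\beta$ needed for the ``only if'' part of Theorem~\mref{thm:gen} becomes available only after $\theta(k\NN)\subseteq k\NN$ has been established.
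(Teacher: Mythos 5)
Your proof is correct and follows essentially the same route as the paper: first establish $\theta(k\NN)\subseteq \nonz{(k\NN)}$, then invoke the converse part of Theorem~\ref{thm:gen} (whose hypothesis $\supp_\beta+\theta(\supp_\beta)\subseteq\supp_\beta$ is now available) to get Eqs.~(\ref{eq:dege1})--(\ref{eq:dege2}) on $k\NN$, translate via $m=km_1$, $n=km_2$, and settle the converse with Theorem~\ref{thm:betagen}(\ref{it:betagen1}). The only local difference is the first step: the paper derives $\theta(km)\in k\NN$ from the off-diagonal instances $P\big(x^{km_1+i}P(x^{km_2})\big)=0$ for $1\le i\le k-1$ together with a congruence contradiction, whereas you use the diagonal instance $\beta(km')^2=2\beta(km')\beta\big(km'+\theta(km')\big)$ --- both are valid, and your version is the same device the paper itself employs in Lemma~\ref{lem:fin-or-cofin}.
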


\begin{proof}
Since $\supp_\beta=\{km\,|\,m\in\NN\}$, we have $\nul_\beta=\{km+i\,|\,1\leq i\leq k-1, m\in\NN\}$.
Suppose $P$ is a Rota-Baxter operator on $\bfk[x]$. Then by Eq.~(\mref{eq:RB}), we have $P(x^{km_1+i}P(x^{km_2}))=0$ for all $m_1,m_2\in\NN$ and $1\leq i\leq k-1$.
Thus $\beta(km_2)\beta(km_1+i+\theta(km_2))=0$.
Since $\beta(km_2)\neq 0$, we have $\beta(km_1+i+\theta(km_2))=0$. Then $km_1+i+\theta(km_2)$ is in $\nul_\beta,$ and then $i+\theta(km_2)$ is in $\nul_\beta$ for $1\leq i\leq k-1.$ Suppose that there exists $m_0\in\NN$ such that $\theta(km_0)\nequiv 0$ $(\bmod\, k).$ Then there exists  $1\leq i_0\leq k-1$ such that $i_0+\theta(km_0)\equiv0\, (\bmod\,k)$. So $i_0+\theta(km_0)$ is in $ \supp_\beta$ by the definition of $\supp_\beta$. This is a contradiction to the fact proved above that $i+\theta(km_2)$ is in $\nul_\beta$ for $1\leq i\leq k-1$.

Thus $\theta(km)$ is in $\nonz{\supp_\beta}$ for all $m\in\NN$.
So $kn+\theta(km)$ is in $ \supp_\beta$ for all $n,m\in \NN$. By Theorem~\mref{thm:gen}, Eqs.~(\mref{eq:dege1}) and ~(\mref{eq:dege2}) hold. Let $\tilde{\theta}(m):=\theta(km)$ and let $\tilde{\beta}(m):=\beta(km)$, $m\in\NN.$ Thus $\tilde{\beta}(m)\neq 0$ for all $m\in \NN$.
Then by Eqs.~(\mref{eq:dege1}) and ~(\mref{eq:dege2}), Eqs.~(\mref{eq:tildb1}) and ~(\mref{eq:tildb2}) hold.
This is what we want. The converse follows from Theorem~\mref{thm:betagen}(\mref{it:betagen1}).
\end{proof}

Proposition~\mref{pp:monog} gives a large class of monomial Rota-Baxter operators on $\bfk[x]$ with $\supp_\beta=k\NN$, reducing to Corollary~\mref{coro:bt} for~$k=1$. On the other hand, Theorem~\mref{thm:betagen} also gives the following result on monomial Rota-Baxter operators on $\bfk[x]$ where $\supp_\beta$ is now complementary to Proposition~\mref{pp:monog}.

\begin{prop}
  Let $P(x^n)=\beta(n)x^{\theta(n)}$ be a monomial linear operator on $\bfk[x]$
  with $\supp_\beta=\NN \setminus k\NN$ for some $k\geq 2$.
\begin{thmenumerate}
\item For any~$t\in\NNP$ one obtains a degenerate monomial RBO by
  setting~$\theta(km+i)=k(m+t)$ and~$\theta(km)=0$ for $m\in \NN$ and $1\leq
  i\leq k-1$, choosing~$\beta$ as in
  Theorem~\mref{thm:betagen}(\mref{it:betagen2}).  \mlabel{it:dege21}
\item Assume that $\theta(i)=k$ for $1\leq i\leq k-1$.  Then $\theta$
  corresponds to a degenerate monomial RBO on $\bfk[x]$ if and only if
  $\theta(km+i)=k(m+1)$ for all $m\in\NN$ and $1 \le i \le k-1$.
  \mlabel{it:dege22}
\end{thmenumerate}
\mlabel{pp:dege2}
\end{prop}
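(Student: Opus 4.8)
The plan is to reduce both parts to the characterization of degenerate monomial Rota-Baxter operators encoded in Theorem~\ref{thm:gen} and its constructive counterpart Theorem~\ref{thm:betagen}, applied with the support set $\supp = \NN \setminus k\NN$. The first observation, which I would record at the outset, is that when $\supp_\beta = \NN \setminus k\NN$ the complement is $\nul_\beta = k\NN$, and by Lemma~\ref{lem:zero} we also have $\nul_\theta = k\NN$, so $\theta(km) = 0$ is forced for all $m$; this is why the stated formulas only prescribe $\theta$ on the residues $i \not\equiv 0 \pmod k$. I would also note up front that for $m,n \in \supp$ we have $m+n \in \supp$ precisely when $m+n \not\equiv 0$, so the condition $\supp + \theta(\supp) \subseteq \supp$ needed for the converse direction of Theorem~\ref{thm:gen} must be checked, not assumed — it will follow once we know $\theta(\supp) \subseteq k\NN$, i.e.\ that $\theta$ takes values divisible by $k$.

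For part~(\ref{it:dege21}), I would simply verify that the prescribed $\theta$ — namely $\theta(km+i) = k(m+t)$ for $1 \le i \le k-1$ and $\theta(km)=0$ — satisfies Eq.~(\ref{eq:dege1}) on $\supp = \NN\setminus k\NN$, together with the containment conditions (a) of Theorem~\ref{thm:betagen}(\ref{it:betagen1}), after which Theorem~\ref{thm:betagen}(\ref{it:betagen2}) supplies a valid $\beta$ via $\beta(n) = c/\theta(n)$ and Corollary-style reasoning gives an RBO. Concretely: take $m = k a + i$, $n = k b + j$ with $1 \le i,j \le k-1$; then $\theta(n) = k(b+t)$, so $m + \theta(n) = ka + i + k(b+t) = k(a+b+t) + i \in \supp$ since $i \ne 0$, and $\theta(m+\theta(n)) = k(a+b+t+t) = k(a+b+2t)$. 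By symmetry $\theta(\theta(m)+n) = k(a+b+2t)$ as well, while $\theta(m) + \theta(n) = k(a+t) + k(b+t) = k(a+b+2t)$; the three agree, so Eq.~(\ref{eq:dege1}) holds. Degeneracy is clear since $\theta$ and $\beta$ vanish on $k\NN \ne \emptyset$. This part is essentially a bookkeeping check and I expect no obstacle.

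Part~(\ref{it:dege22}) is the substantive one. The ``if'' direction is covered by part~(\ref{it:dege21}) with $t=1$ (which indeed gives $\theta(i) = k$ for $1 \le i \le k-1$ and $\theta(km+i) = k(m+1)$). For the ``only if'' direction, I would assume $\theta(i) = k$ for all $1 \le i \le k-1$ and that $P$ is a (degenerate monomial) Rota-Baxter operator, and deduce $\theta(km+i) = k(m+1)$ by induction on $m$. The engine is Eq.~(\ref{eq:dege1}), which holds for all $m,n \in \supp_\beta$ by Theorem~\ref{thm:gen} (its hypothesis $\supp_\beta + \theta(\supp_\beta) \subseteq \supp_\beta$ I would first secure by checking, using the base relations, that $\theta$ is valued in $k\NN$; indeed from $\theta(i)=k$ and Eq.~(\ref{eq:dege1}) with small arguments one forces each $\theta(km+i)$ to be a multiple of $k$ — this is the point that needs care). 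Granting that, fix $i$ with $1 \le i \le k-1$. For the inductive step, apply Eq.~(\ref{eq:dege1}) in the form $\theta(a) + \theta(b) = \theta(a + \theta(b))$ with a judicious choice, e.g.\ $a = km+i$ and $b = i$ (or $b$ another residue), so that $\theta(b) = k$ and $a + \theta(b) = km+i+k = k(m+1)+i$, yielding $\theta(k(m+1)+i) = \theta(km+i) + k$; combined with the base case $\theta(i)=k$ this gives $\theta(km+i) = k(m+1)$ by induction. The main obstacle I anticipate is exactly the divisibility claim $\theta(\supp_\beta) \subseteq k\NN$: without it the hypothesis of the converse half of Theorem~\ref{thm:gen} is not available, and one cannot invoke Eq.~(\ref{eq:dege1}) freely. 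I would handle this by a direct argument from Eq.~(\ref{eq:rbspe}) — taking $m \in \nul_\beta = k\NN$ and $n \in \supp_\beta$ with $n+\theta(m) \in \supp_\beta$, or rather exploiting that for $n \in \supp_\beta$ and a suitable $\nul_\beta$-element the coefficient constraint forces the relevant argument to land in $\nul_\beta = k\NN$, pinning down $\theta(n) \bmod k$ — mirroring the residue-chasing already carried out in the proof of Proposition~\ref{pp:monog}.
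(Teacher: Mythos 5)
Your proposal is correct and follows essentially the same route as the paper: direct verification of Eq.~(\ref{eq:dege1}) and the containment conditions for part~(\emph{a}), and for part~(\emph{b}) the residue argument from Eq.~(\ref{eq:rbspe}) showing $\theta(\supp_\beta)\subseteq k\NN$ (hence $\supp_\beta+\theta(\supp_\beta)\subseteq\supp_\beta$), followed by the induction $\theta(k(m+1)+i)=\theta(km+i+\theta(i))=\theta(km+i)+k$. The divisibility step you flag as the delicate point is exactly the one the paper carries out, in the same way.
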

\begin{proof}(\ref{it:dege21}) By our assumption on $\supp_\beta$, we have
  $\nul_\beta=\{km\,|\,m\in\NN\}$. By assumption $\theta(km+i)=k(m+t)$ for all
  $m\in\NN$ and $1\leq i\leq k-1$, hence we obtain
  $\nul_\beta+\theta(\supp_\beta)\subseteq \nul_\beta$. Since
$$\theta(km_1+i_1)+\theta(km_2+i_2)=k(m_1+
m_2+2t)$$
and
$$\theta(km_1+i_1+\theta(km_2+i_2))=\theta
(k(m_1+m_2+t)+i_1)=k(m_1+m_2+2t)$$ for all $m_1,m_2\in\NN$ and $1\leq
i_1,i_2\leq k-1$, we have Eq.~(\mref{eq:dege1}). Thus we may apply
Theorem~\mref{thm:betagen}(\mref{it:betagen2}) to obtain a degenerate RBO $P$ on
$\bfk[x]$.  \smallskip

\noindent
(\mref{it:dege22}) Assume first that $P$ is a monomial RBO on $\bfk[x]$. Then by
Eq.~(\mref{eq:rbspe}), $\beta(km+i)\beta(km+\theta(km+i))=0$ for all $m\in\NN$
and $1\leq i\leq k-1$. Since $\beta(km+i)\neq 0$, we have
$\beta(km+\theta(km+i))=0$, so $km+\theta(km+i)$ is in $\nul_\beta$.  From
$\nul_\beta=k\NN$ we infer $\theta(km+i)\in\nul_\beta$. Thus
$\supp_\beta+\theta(\supp_\beta)\subseteq \supp_\beta$. By
Theorem~\mref{thm:gen}, Eq.~(\mref{eq:dege1}) holds. We now prove that
$\theta(km+i)=k(m+1)$ by induction on $m\geq 0$. The base case $m=0$ is
immediate from our assumption. Assume $\theta(km+i)=k(m+1)$ has been proved for
$m\geq 0$. By Eq.~(\mref{eq:dege1}), we have
$$\theta(k(m+1)+i)=\theta(km+i+\theta(i))=\theta(
km+i)+\theta(i).$$
By the induction hypothesis, we get $\theta(k(m+1)+i)=k(m+2)$. This completes the proof.

Conversely, by $\theta(km+i)=k(m+1)$ and Item~(\mref{it:dege21}), we obtain a
degenerate RBO $P$ on $\bfk[x]$.
\end{proof}

\begin{exam}
Taking $k= 2$ in Proposition~\mref{pp:monog} and Proposition~\mref{pp:dege2}, we obtain the following degenerate monomial Rota-Baxter operators on $\bfk[x]$.
\begin{enumerate}
\item
$ P(x^{2k})=\frac{x^{2(k+1)}}{k+1} \text{ and } P(x^{2k+1})=0 \text{ for all } k\in \NN.
$
\mlabel{it:degm2}
\item
$P(x^{2k})=0  \text{ and } P(x^{
2k+1})=\frac{x^{2(k+1)}}{k+1} \text{for all}\, k\in\NN.$
\mlabel{it:degm1}
\end{enumerate}
\mlabel{exam:deg}
\end{exam}

The above examples may also be regarded as special cases of the following
result.
\begin{prop}
  Let $P_0\in \rbo(R)$ for a $\bfk$-algebra~$R$. Assume~$\phi$ is a $\bfk$-linear
  operator on~$R$ such that $E := P_0(\im(\phi))$ is a nonunitary
  $\bfk$-subalgebra. If~$\phi$ is a homomorphism of the $E$-module~$R$ then
  $P_0 \circ \phi$ is also a Rota-Baxter operator on~$R$.  \mlabel{prop:newrbo}
\end{prop}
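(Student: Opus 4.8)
The plan is to verify the weight-zero Rota-Baxter identity
$P(u)P(v)=P(uP(v))+P(P(u)v)$ for $P:=P_0\circ\phi$ directly, by applying the Rota-Baxter identity already available for $P_0$ to the pair $(\phi(u),\phi(v))$ and then reconciling the two sides by means of the $E$-linearity of $\phi$.

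First I would record the elementary observation that $E=P_0(\im(\phi))=\im(P_0\circ\phi)=\im(P)$; in particular $P(u)\in E$ for every $u\in R$. Here the hypothesis that $E$ is a nonunitary $\bfk$-subalgebra of $R$ is exactly what makes $R$ an $E$-module, and hence what gives meaning to the hypothesis that $\phi$ is a homomorphism of the $E$-module $R$, namely $\phi(er)=e\,\phi(r)$ for all $e\in E$ and $r\in R$. (Note that no identity element of $E$ is invoked anywhere, consistent with $E$ being only nonunitary.)

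Next I would rewrite the candidate right-hand side. Since $P(v)\in E$ and $R$ is commutative, $uP(v)=P(v)\,u$, so the $E$-linearity of $\phi$ gives $\phi\big(uP(v)\big)=P(v)\,\phi(u)=P_0(\phi(v))\,\phi(u)$; symmetrically $\phi\big(P(u)v\big)=P_0(\phi(u))\,\phi(v)$. Applying $P_0$ to both yields
\[
P(uP(v))+P(P(u)v)=P_0\!\big(P_0(\phi(v))\,\phi(u)\big)+P_0\!\big(P_0(\phi(u))\,\phi(v)\big).
\]
On the other hand, the Rota-Baxter identity for $P_0$ applied to $(\phi(u),\phi(v))$ reads
\[
P(u)P(v)=P_0(\phi(u))\,P_0(\phi(v))=P_0\!\big(\phi(u)\,P_0(\phi(v))\big)+P_0\!\big(P_0(\phi(u))\,\phi(v)\big).
\]
Using commutativity of $R$ to identify $\phi(u)\,P_0(\phi(v))$ with $P_0(\phi(v))\,\phi(u)$, the two displays agree term by term, so $P\in\rbo(R)$.

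There is essentially no obstacle: the single idea is that the factor $P(v)$ occurring inside $\phi$ has already passed through $P_0$, hence lies in $E$, and so can be pulled out of $\phi$ by the module-homomorphism hypothesis; after that the statement reduces to the Rota-Baxter identity for $P_0$ together with commutativity of $R$. The only points to watch are (i) invoking $E$-linearity only with genuine elements of $E$ --- which is automatic since $P(u),P(v)\in E$ --- and (ii) that the argument uses the weight-zero hypothesis in an essential way, through the three-term form of the Rota-Baxter identity for $P_0$.
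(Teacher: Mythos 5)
Your proof is correct and follows essentially the same route as the paper's: apply the Rota--Baxter identity for $P_0$ to the pair $(\phi(u),\phi(v))$ and use the $E$-module homomorphism property to pull the factors $P_0(\phi(u)), P_0(\phi(v)) \in E$ out of $\phi$, so that the resulting terms match $(P_0\circ\phi)(u(P_0\circ\phi)(v))$ and $(P_0\circ\phi)((P_0\circ\phi)(u)v)$. Your added remarks --- that $E=\im(P_0\circ\phi)$ and that the weight-zero restriction is genuinely needed (a nonzero weight term $\lambda P_0(\phi(u)\phi(v))$ could not be rewritten as $\lambda (P_0\circ\phi)(uv)$ without assuming $\phi$ multiplicative) --- are accurate and worthwhile.
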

\begin{proof}
  This follows immediately since
  \begin{eqnarray*}
    (P_0\circ \phi)(a)(P_0\circ\phi)(b)&=&
    P_0(\phi(a))P_0(\phi(b))=
    P_0(\phi(a)P_0(\phi(b)))+P_0(P_0(\phi(a))\phi(b))\\
    &=&(P_0\circ\phi)(a(P_0\circ\phi)(b))+(P_0\circ\phi)
    ((P_0\circ\phi)(a)b),
  \end{eqnarray*}
  for all $a,b\in R$.
\end{proof}

For $R = \bfk[x]$ let $\phi\colon f(x)\mapsto (f(x)+f(-x))/2$ be the projector
onto the $\bfk$-subspace spanned by the even monomials and set
${P_0}=2\stdint{0}x$. Then
$$({P_0}\circ \phi)(x^n):=
\begin{cases}
\frac{x^{2(k+1)}}{k+1} & \text{if $n=2k$,}\\
0 & \text{if $n=2k+1$,}
\end{cases}$$
for all $k\in\NN$ so that ${P_0}\circ \phi$ is the same as $P$ in Example ~\mref{exam:deg}(\mref{it:degm2}). On the other hand, choosing~$\phi$ as the projector $f(x) \mapsto (f(x)-f(-x))/2$ onto the space of odd
monomials and setting ${P_0} = 2 \stdint{0}$ yields
$$({P_0}\circ \phi)(x^n):=
\begin{cases}
0 & \text{if $n=2k$,}\\
\frac{x^{2(k+1)}}{k+1} & \text{if $n=2k+1$,}
\end{cases}$$
for all $k\in\NN$ so that ${P_0}\circ \phi$ is the same as $P$ in Example ~\mref{exam:deg}(\mref{it:degm1}). In both cases, $E$ is the nonunitary algebra of nonconstant even monomials.

\begin{prop} Let $P(x^n)=\beta(n)x^{\theta(n)}$ be a nonzero degenerate monomial
RBO on $\bfk[x]$ satisfying the condition
$\supp_\beta+\theta(\supp_\beta)\subseteq \supp_\beta$.
\begin{thmenumerate}
\item
There exists a map~$\sigma\colon \NN \to
  \supp_\beta$ such that~$P_0(x^n) := P(x^{\sigma(n)})$ defines a nondegenerate
  monomial RBO on $\bfk[x]$.
\mlabel{it:degcls1}
\item
We have
  \begin{equation}
    \label{eq:beta-shape} \supp_\beta = C \uplus (s_1 + e \NN) \uplus \cdots
\uplus (s_k + e \NN),
  \end{equation}
  where~$C \subset \NN$ is finite, $k < e \in \nonz{\NN}$, and~$s_1, \dots, s_k
\in \supp_\beta$ are incongruent modulo $e$ $($in the sense that $x-y \not\in e \ZZ$$)$ such that~$s_1 - e, \dots, s_k
- e \not\in C$. Moreover, there exists a finite set~$E \subset \supp_\beta$ such
that~$\theta$ is determined uniquely by its values on~$E$.
\label{it:degcls2}
\end{thmenumerate}
\label{pp:degcls}
\end{prop}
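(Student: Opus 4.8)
The plan is to first pin down the arithmetic shape of $\supp_\beta$ and of $\theta|_{\supp_\beta}$ that is forced by the Rota-Baxter relation, and then read off both parts from this structure. Throughout I would use that, since $P\neq0$ is degenerate, Lemma~\mref{lem:fin-or-cofin} makes $\supp_\beta=\supp_\theta$ infinite and $\nul_\beta$ nonempty, hence also infinite; that $\theta(n)\geq1$ on $\supp_\beta$ by Lemma~\mref{lem:zero}; and that, by Theorem~\mref{thm:gen} applied with the standing hypothesis $\supp_\beta+\theta(\supp_\beta)\subseteq\supp_\beta$, we have $\nul_\beta+\theta(\supp_\beta)\subseteq\nul_\beta$ as well as Eqs.~(\mref{eq:dege1}) and~(\mref{eq:dege2}) for all $m,n\in\supp_\beta$.

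First I would set $e:=\gcd\bigl(\theta(\supp_\beta)\bigr)$ and let $T\subseteq\NNP$ be the set of sums of finitely many elements of $\theta(\supp_\beta)$, so that $e\mid t$ for every $t\in\theta(\supp_\beta)$ and $\gcd(T)=e$. Iterating the two inclusions above gives $\supp_\beta+T\subseteq\supp_\beta$ and $\nul_\beta+T\subseteq\nul_\beta$, and by the standard numerical-semigroup (Frobenius) fact $T$ contains $em$ for all $m\geq M$, for some $M$. Since $\supp_\beta$ and $\nul_\beta$ partition $\NN$, an interpolation argument --- given $n\equiv n'\pmod e$, pick $L$ in that residue class with $L-n,L-n'\in e\NN_{\geq M}$ and compare membership of $n$, $n'$ and $L$ --- shows that membership in $\supp_\beta$ depends only on the residue modulo $e$. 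Hence $\supp_\beta$ is exactly a disjoint union of the $k$ residue classes mod $e$ represented by the residues $s_1<\dots<s_k$ in $\{0,\dots,e-1\}$ that lie in $\supp_\beta$; taking $C=\emptyset$ this yields Eq.~(\mref{eq:beta-shape}) with all stated side conditions satisfied (those on $C$ vacuously), and since the nonempty set $\nul_\beta$ is the union of the remaining classes we get $k<e$, so $e\geq2$. For the shape of $\theta$, Eq.~(\mref{eq:dege1}) gives $\tilde\theta(m+\theta(n))=\tilde\theta(m)$ for $m,n\in\supp_\beta$, where $\tilde\theta:=\theta-\id$; thus $\tilde\theta$ is invariant under adding any element of $T$, and the same interpolation, carried out inside $\supp_\beta$ (legitimate because $\supp_\beta$ is a union of full classes), shows $\tilde\theta$ is constant on each class $s_i+e\NN$. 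Therefore $\theta(n)=n+\theta(s_i)-s_i$ for $n\in s_i+e\NN$, so $\theta|_{\supp_\beta}$ is recovered from its values on the finite set $E:=\{s_1,\dots,s_k\}\subset\supp_\beta$ (with $\theta\equiv0$ on $\nul_\beta$ by convention); this establishes~(\mref{it:degcls2}).

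For~(\mref{it:degcls1}) I would use the explicit form of $\theta$ just obtained. Write $\theta(s_i)=e\,v_i$ with $v_i\in\NNP$, so $\theta(n)=\bigl(\lfloor n/e\rfloor+v_i\bigr)e$ for $n\in s_i+e\NN$, and fix an index $i_0$ with $v_{i_0}=\min_i v_i=:u$. Define $\sigma\colon\NN\to\supp_\beta$ by $\sigma(n):=e\lfloor n/e\rfloor+s_{i_0}$, which indeed lands in $s_{i_0}+e\NN\subseteq\supp_\beta$. Then $P_0(x^n):=P(x^{\sigma(n)})=\beta(\sigma(n))\,x^{\theta(\sigma(n))}$ is monomial with $\beta_0(n):=\beta(\sigma(n))\neq0$ (hence nondegenerate) and $\theta_0(n):=\theta(\sigma(n))=\bigl(\lfloor n/e\rfloor+u\bigr)e$. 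One checks directly that $\theta_0$ satisfies Eq.~(\mref{eq:dege1}) for all $m,n\in\NN$ (each of the three terms equals $(\lfloor m/e\rfloor+\lfloor n/e\rfloor+2u)\,e$), and that, writing $a:=\sigma(m)$ and $b:=\sigma(n)$ in $\supp_\beta$, one has $\sigma(m+\theta_0(n))=a+\theta(b)$ and $\sigma(n+\theta_0(m))=b+\theta(a)$; hence Eq.~(\mref{eq:dege2}) for $(\theta_0,\beta_0)$ at $(m,n)$ is literally Eq.~(\mref{eq:dege2}) for $(\theta,\beta)$ at $(a,b)$, which holds since $P$ is a Rota-Baxter operator. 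By Corollary~\mref{coro:bt}(\mref{it:bt1}), $P_0$ is the desired nondegenerate monomial Rota-Baxter operator.

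The main obstacle is the periodicity analysis in the middle paragraph: that is where the Rota-Baxter relation --- entering only through the translation-closure of $\supp_\beta$ and $\nul_\beta$ under $\theta(\supp_\beta)$ --- must be combined with the Frobenius fact to force genuine periodicity of $\supp_\beta$ and local constancy of $\tilde\theta$. Once that structure is in hand, parts~(a) and~(b) are essentially bookkeeping.
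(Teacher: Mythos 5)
Your proof is correct, and it takes a route that differs from the paper's in two substantive ways. First, the paper proves part~(\mref{it:degcls1}) directly from the numerical-semigroup setup (choosing $\sigma(\ell e+r)=(f+\ell)e+s$ with $f$ the Frobenius number of $T/e$) and verifies the Rota--Baxter identity for $P_0$ by a polarization argument on the bilinear form $RB(P_0,P_0)$; you instead establish the arithmetic structure of $\supp_\beta$ and $\theta$ first and then reduce Eqs.~(\mref{eq:dege1})--(\mref{eq:dege2}) for $(\theta_0,\beta_0)$ literally to the same equations for $(\theta,\beta)$ at points of $\supp_\beta$, concluding via Corollary~\mref{coro:bt}. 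Second, and more interestingly, your interpolation argument uses \emph{both} translation-invariance properties, $\supp_\beta+T\subseteq\supp_\beta$ and $\nul_\beta+T\subseteq\nul_\beta$ (the latter coming from Theorem~\mref{thm:gen} and iterated over the semigroup $T$), whereas the paper's proof of part~(\mref{it:degcls2}) only exploits the first. This buys you a strictly sharper conclusion: since membership in $\supp_\beta$ depends only on the residue modulo $e$, the finite set $C$ in Eq.~(\mref{eq:beta-shape}) is in fact empty, $\supp_\beta$ is exactly a union of $k<e$ full residue classes, and $\tilde\theta=\theta-\id$ is genuinely constant on each class, giving the explicit formula $\theta(n)=n+\theta(s_i)-s_i$ and the concrete choice $E=\{s_1,\dots,s_k\}$. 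Your statement implies the paper's (the conditions on $C$ become vacuous), so there is no gap; the only caveat is that the extra strength rests on the inclusion $\nul_\beta+\theta(\supp_\beta)\subseteq\nul_\beta$, which you correctly justify from the converse direction of Theorem~\mref{thm:gen} under the standing hypothesis.
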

\begin{proof} Since~$P$ is nonzero, both~$\supp_\beta \neq \emptyset$
and~$\nul_\beta \neq \emptyset$ are infinite by
Lemma~\mref{lem:fin-or-cofin}. From Eq.~(\mref{eq:dege1}) and the condition
$\supp_\beta+\theta(\supp_\beta)\subseteq \supp_\beta$ we see that~$T :=
\theta(\supp_\beta)$ is additively closed. As in the proof of
Theorem~\mref{thm:theta-char} one checks that~$\theta-\id_\NN$ is periodic
on~$\supp_\beta$ with primitive period~$d$ and~$T \subseteq d \nonz{\NN}$ so
that~$d \mid e := \gcd(T)$. Hence~$T/e$ is a numerical
semigroup~\cite[Prop.~10.1]{RosGar}, meaning a subsemigroup of~$\nonz{\NN}$ with
a finite complement~$G \subseteq \nonz{\NN}$ of so-called gaps. Thus we
obtain~$T = e \nonz{\NN} \setminus eG$.  We write~$f \in \NN$ for the Frobenius
number of~$T/e$, meaning the greatest element of~$G$ for~$G \neq \emptyset$
and~$f=0$ otherwise.
\smallskip

\noindent
(\ref{it:degcls1})
  Fix an element $s$ of~$\supp_\beta$. We define~$\sigma\colon \NN
  \to \supp_\beta$ as follows. For $n\in \NN$, write $n=\ell e+r$ with $\ell\geq 0$ and $0\leq r<e$. Define $\sigma(\ell e + r) :=
  (f+\ell)e + s$. Then~$\sigma\colon
  \NN \to \supp_\beta$ follows from the condition~$\supp_\beta+ T \subseteq
  \supp_\beta$ since~$(f+\ell)e \in T$ for all~$\ell > 0$. We show now that
  \begin{equation}
    \sigma\Big(n+\theta(\sigma(m))\Big) = \sigma(n) + \theta(\sigma(m))
    \mlabel{eq:sigma}
  \end{equation}
  for all~$m, n \in \NN$. We have~$\theta(\sigma(m)) = te \in T$ for some~$t
  \not\in G$, and we may write~$n = \ell e + r$ with~$0 \le r < e$ and~$\ell \ge
  0$. Then one computes~$\sigma(r) + (f+\ell+t)e$ for both sides
  of Eq.~(\mref{eq:sigma}).

  Let us now prove that~$P_0$ satisfies Eq.~(\mref{eq:RB}) or
  equivalently~$RB(P_0,P_0) = 0$. Since the latter is a symmetric bilinear form
  and~$\bfk[x]$ has characteristic zero, the polarization identity implies that
  it suffices to prove~$RB(P_0, P_0)(u,u) = 0$ for all~$u \in \bfk[x]$. Of
  course we may restrict ourselves to the canonical basis~$u = x^n$, so it
  remains to show $P(x^{\sigma(n)})^2 = 2 P_0(x^n \,
  P(x^{\sigma(n)}))$. Applying the definition of~$P$, this is equivalent to
  \begin{equation*}
    \beta(\sigma(n))^2 \, x^{2\theta(\sigma(n))} =
    2 \beta(\sigma(n)) \, P_0(x^{n+\theta(\sigma(n))}),
  \end{equation*}
  and we may use Eq.~(\mref{eq:sigma}) to expand the right-hand side further to
  \begin{equation*}
    2 \beta(\sigma(n)) \, \beta(\sigma(n)+\theta(\sigma(n))) \,
    x^{\theta(\sigma(n)+\theta(\sigma(n)))}.
  \end{equation*}
  But now we may apply Eqs.~(\mref{eq:dege1}) and~(\mref{eq:dege2}) of
  Theorem~\mref{thm:gen} to conclude that this is equal to the left-hand
  side. Hence~$P_0$ is indeed a monomial RBO on~$\bfk[x]$. Clearly~$P_0(x^n) =
  P(x^{\sigma(n)}) \neq 0$ since~$\sigma(n) \in \supp_\beta$, so~$P_0$ is
  nondegenerate.
\smallskip

\noindent
(\ref{it:degcls2})
For defining~$s_1, \dots, s_k$, consider first the sets~$\Sigma_i :=
\supp_\beta \cap (i + e \NN)$ for~$i \in \{0, \dots, e-1\}$. Suppressing the
empty ones, we reindex the rest as~$\Sigma_1, \dots, \Sigma_k$ for~$1 \le k \le
e$. Then for any~$i \in \{ 1, \dots, k \}$ there exists~$\sigma_i \in \Sigma_i$
such that~$\sigma_i + e \NN \subseteq \Sigma_i$. Indeed, one may
choose~$\sigma_i = \sigma_i' + (f+1)e$ for any~$\sigma_i' \in \Sigma_i$ since
then~$(f+1)e \in T$, and the hypothesis~$\supp_\beta+T\subseteq \supp_\beta$
implies the required condition~$\sigma_i + e \NN \subseteq \Sigma_i$. Let~$s_i
\in \Sigma_i$ be minimal such that the condition is satisfied; this implies in
particular~$s_i - e \not\in \supp_\beta$. Then clearly~$\Sigma_i = C_i \uplus
(s_i + e \NN)$ for finite sets~$C_i \subset \NN$. Now define~$C := C_1 \cup
\cdots \cup C_k$ to obtain the decomposition~(\mref{eq:beta-shape}). We must
have~$k<e$ since otherwise~$\nul_\beta \subseteq \{ 0, \dots, \max(s_1,
\dots, s_e)\}$ is finite, contradicting Lemma~\mref{lem:fin-or-cofin}.
Finally, note that~$E := \supp_\beta \setminus (\supp_\beta+T)$ is bounded
by~$\max(s_1, \dots, s_k)+(f+1)e$ and hence finite. Clearly, $\theta$
is determined on~$\supp_\beta \setminus E$ by Eq.~(\mref{eq:dege1}).
\end{proof}

\section{Injective Rota-Baxter operators on \texorpdfstring{$\bfk[x]$}{bkfx}}
\mlabel{sec:inj}

For now let~$\bfk$ be an arbitrary field of characteristic zero. An
important subclass of Rota-Baxter operators~$P$ on~$\bfk[x]$ are those
associated with the standard derivation~$\partial$ in the sense
that~$\partial \circ P = 1_{\bfk[x]}$. We generalize this for
arbitrary~$r \in \nonz{\bfk[x]}$ to the \emph{differential
  law}~$\partial \circ P = r$, where~$r$ denotes the corresponding
multiplication operator. Thus we define
\begin{equation}
 \rbo_r(\bfk[x]):=\{ P\in \rbo(\bfk[x])\,|\, \partial \circ P = r\}.
\mlabel{eq:rbor}
\end{equation}
Let us now show that the class of all operators satisfying a
differential law actually coincides with the class of all injective
operators, which we denote by~$\rbo_*(\bfk[x])$.
\begin{theorem}
We have~$\rbo_*(\bfk[x]) =
  \bigcup_{r \in \nonz{\bfk[x]}} \rbo_r(\bfk[x])$.
  \mlabel{thm:inj-diff-law}
\end{theorem}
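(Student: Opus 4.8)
The plan is to prove the two inclusions of the claimed equality separately. The inclusion ``$\supseteq$'' is immediate: if $P \in \rbo_r(\bfk[x])$ for some $r \in \nonz{\bfk[x]}$, then $P(f) = 0$ implies $0 = \partial(P(f)) = rf$, and since $\bfk[x]$ is an integral domain with $r \neq 0$ we get $f = 0$; thus $P$ is injective, i.e.\ $P \in \rbo_*(\bfk[x])$.

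For the inclusion ``$\subseteq$'', let $P$ be an injective Rota-Baxter operator and put $A := \im P$ and $p := P(1)$. I would first record two facts. \emph{(1) No nonzero constant lies in $A$.} If $c = P(u) \in \bfk$, then Eq.~(\mref{eq:RB}) with $u=v$ gives $c^2 = P(u)P(u) = 2P(uP(u)) = 2P(cu) = 2cP(u) = 2c^2$, hence $c^2 = 0$ and $c = 0$. In particular $p \notin \bfk$ (it is nonzero because $P$ is injective), so $p$ is transcendental over $\bfk$ and $p' := \partial(p) \ne 0$. \emph{(2) $P^{-1}$ is a derivation of $A$.} Since the weight is $0$ we have $P(P(u)v + uP(v)) = P(P(u)v) + P(uP(v)) = P(u)P(v)$, so $P$ is a bijective algebra homomorphism $(\bfk[x],\star_P) \to (A,\cdot)$; writing $a = P(u)$, $b = P(v)$ and using $P(u)v + uP(v) = u\star_Pv$, this reads $P^{-1}(ab) = a\,P^{-1}(b) + b\,P^{-1}(a)$, so $P^{-1}\colon A \to \bfk[x]$ is a $\bfk$-derivation of $A$ into the $A$-module $\bfk[x]$.

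The key step is a rigidity statement: \emph{every $\bfk$-derivation $D\colon A \to \bfk[x]$ satisfies $D(a) = g\,a'$ for all $a \in A$, where $a' = \partial a$ and $g \in \bfk(x)$ depends only on $D$.} Indeed, viewing $D$ as a derivation into $\bfk(x) \supseteq \bfk[x]$, the localization formula extends it to a derivation $\mathrm{Frac}(A) \to \bfk(x)$; since $A$ contains the transcendental element $p$, both $\mathrm{Frac}(A)$ and $\bfk(x)$ have transcendence degree $1$ over $\bfk$, so $\bfk(x)/\mathrm{Frac}(A)$ is algebraic, hence separable in characteristic zero, and $D$ extends uniquely to a derivation of $\bfk(x)$. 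As $\mathrm{Der}_\bfk(\bfk(x),\bfk(x))$ is one-dimensional over $\bfk(x)$ and spanned by $\partial$, this extension equals $g\,\partial$ for some $g \in \bfk(x)$, which restricts to the asserted formula. Applying this to $D = P^{-1}$ and evaluating at $p$ gives $1 = P^{-1}(p) = g\,p'$, so $g = 1/p'$ and $P^{-1}(a) = a'/p'$ for all $a \in A$. Writing an arbitrary element of $A$ as $P(f)$, this says $f = (P(f))'/p'$, i.e.\ $\partial(P(f)) = p'f$ for every $f \in \bfk[x]$; in other words $\partial \circ P$ is the multiplication operator by $p'$. Since $p' \in \nonz{\bfk[x]}$ by (1), we conclude $P \in \rbo_{p'}(\bfk[x]) \subseteq \bigcup_{r \in \nonz{\bfk[x]}} \rbo_r(\bfk[x])$, as required.

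I expect the rigidity statement to be the main obstacle: one must argue that the derivation $P^{-1}$ of the a priori uncontrolled subalgebra $A = \im P$ is forced to be a ``logarithmic derivative'' relative to $\partial$. The two essential inputs are transcendence degree one — supplied by the single non-constant polynomial $p \in A$, which is exactly where injectivity of $P$ enters — and characteristic zero, needed both for $p' \neq 0$ and for separability of $\bfk(x)/\mathrm{Frac}(A)$. If one prefers to stay within $\bfk[x]$, an alternative is to consider the $\bfk[x]$-module $\mathcal{D}$ of all linear $\delta$ with $\delta(u\star_Pv) = P(u)\delta(v) + P(v)\delta(u)$: it contains $\id$ and $\partial \circ P$, and $h \cdot \id$ is the multiplication operator by $h$ for each $h \in \bfk[x]$, so it would suffice to show $\mathcal{D}$ is freely generated by $\id$ — but establishing that freeness reduces once more to the same rigidity phenomenon.
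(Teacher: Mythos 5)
Your proof is correct and follows essentially the same route as the paper's: the easy inclusion is identical, and the key step --- recognizing $P^{-1}$ as a derivation on $\im(P)$ (via the double product), extending it through the fraction field to a derivation of $\bfk(x)$, and using that $\mathrm{Der}_\bfk(\bfk(x))$ is spanned by $\partial$ to get $\partial\circ P = P(1)'$ --- is exactly the paper's argument. The only deviation is that you justify algebraicity of $\bfk(x)$ over the fraction field of the image by a direct transcendence-degree count, whereas the paper routes this through L\"uroth's theorem; your version is slightly more elementary and equally valid.
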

\begin{proof}
  The inclusion from right to left is simple as~$P(f) = 0$
  implies~$\partial(P(f)) = rf = 0$ and hence~$f=0$ since~$\bfk[x]$ is
  an integral domain.

  Now let~$P\colon \bfk[x] \to \bfk[x]$ be an injective Rota-Baxter
  operator. Then there exists a linear map~$D\colon \im(P) \to
  \bfk[x]$ with~$D \circ P = 1_{\bfk[x]}$. Adjoining~$\bfk$ as
  constants, one can immediately check that~$D$ is a derivation on the
  unitary subalgebra~$J := \bfk + \im(P)$. Note that~$P(1) \not\in
  \bfk$ since~$P(1) = c$ implies~$c^2 = P(1)^2 = 2 P(P(1)) = 2 c^2$
  and hence~$c=0$, contradicting injectivity. This means~$\bfk
  \subsetneq J$. Since~$J \subseteq \bfk[x]$ is an integral domain,
  $D$ extends uniquely to a derivation on the fraction field~$K
  \subseteq \bfk(x)$ of the ring~$J$. By L\"uroth's
  theorem~\mcite[Thm.~11.3.4]{PMCo}, the intermediate field~$\bfk
  \subset K \subseteq \bfk(x)$ is a simple transcendental extension
  of~$\bfk$, so there exists~$\phi \in \bfk(x) \setminus \bfk$ with~$K
  = \bfk(\phi)$. But then~$K \subseteq \bfk(x)$ is an algebraic field
  extension~\mcite[\S73]{vW}, so the derivation~$D$ extends uniquely
  to~$\bfk(x)$ according to~\mcite[Thm.~11.5.3]{PMCo}. But it is well
  known~\mcite[Prop.~1.3.2]{Now} that every $\bfk$-derivation
  on~$\bfk[x]$ is a multiple of the canonical derivation, so we must
  have~$D = \psi \partial$ for~$\psi := D(x)$. Then~$D \circ P = 1$
  on~$\bfk[x]$ implies that~$\psi \cdot P(1)' = 1$, so we obtain~$D =
  r^{-1} \partial$ with~$r := P(1)' \in \bfk[x]$ and then
  also~$\partial \circ P = r$.
\end{proof}

Thus the study of injective Rota-Baxter operators on $\bfk[x]$ reduces
to the study of $\rbo_r(\bfk[x])$. As noted in
Section~\mref{sec:basis}, all standard integral operators~$\stdint{a}$
are in~$\rbo_1(\bfk[x])$; more generally, the  analytically modelled operators~$\stdint{a}r$ are in~$\rbo_r(\bfk[x])$. It is thus tempting
to speculate that~$\rbo_r(\bfk[x])$ is exhausted by
the~$\stdint{a}r$. For the special case~$\bfk = \RR$ and~$r = x^k$
this will be proved at the end of this section in
Theorem~\mref{thm:main}.  For the moment, let~$\bfk$ be an arbitrary
field containing~$\QQ$.

From integration over the reals, it is well known that the difference
between two indefinite integrals is always a definite integral, which
may be interpreted as a \emph{measure}. This generalizes to the
algebraic setting in the following way.

\begin{lemma}
  Let~$r \in \nonz{\bfk[x]}$ and~$a \in \bfk$ be arbitrary. Then $P
  \in \End(\bfk[x])$ satisfies the differential law $\partial \circ
  P=r$ if and only if~$\stdint{a}r-P \in \bfk[x]^{\ast}$.
  \mlabel{lem:image}
\end{lemma}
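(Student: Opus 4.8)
The plan is to unwind what the condition $\partial\circ P=r$ means in terms of the difference $Q := \stdint{a}r - P$. First I would observe that $\stdint{a}r$ is itself a linear operator on $\bfk[x]$ with $\partial\circ(\stdint{a}r) = \partial\circ\stdint{a}\circ r = \id\circ r = r$, using that $\partial\circ\stdint{a} = \id_{\bfk[x]}$ (the standard integral $\stdint{a}$ is a right inverse of $\partial$). Hence $\partial\circ P = r$ holds if and only if $\partial\circ Q = \partial\circ\stdint{a}r - \partial\circ P = r - r = 0$, i.e.\ if and only if $\im(Q)\subseteq\ker(\partial)$.

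The key point is then to identify $\ker(\partial)$ on $\bfk[x]$: since $\bfk$ has characteristic zero, a polynomial has zero derivative precisely when it is constant, so $\ker(\partial) = \bfk$ (viewed as the constant polynomials, identified with $\bfk$ via the structure map). Therefore $\partial\circ Q = 0$ is equivalent to $Q$ being a $\bfk$-linear map $\bfk[x]\to\bfk\subseteq\bfk[x]$, which is exactly the statement $Q\in\bfk[x]^{\ast}$ once we use the identification (already set up in the excerpt) of elements of $\dualmod{\bfk[x]}$ with $\bfk$-linear operators $\bfk[x]\to\bfk$. Putting the two equivalences together: $\partial\circ P = r \iff \partial\circ Q = 0 \iff Q = \stdint{a}r - P \in \bfk[x]^{\ast}$.

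I do not expect any serious obstacle here; the proof is essentially bookkeeping. The only points requiring a word of care are: (i) confirming $\partial\circ\stdint{a} = \id$ directly from the definition $\stdint{a}(x^n) = (x^{n+1}-a^{n+1})/(n+1)$, which is immediate; and (ii) being careful about the identification $\ker(\partial) = \bfk$ and its compatibility with the convention (stated just before Proposition~\mref{pp:compcons}) that a linear operator $\bfk[x]\to\bfk$ is regarded as an element of $\dualmod{\bfk[x]}$ via the structure map. Neither is a real difficulty, so the write-up will be short.

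\begin{proof}
  First note that $\partial \circ \stdint{a} = 1_{\bfk[x]}$: indeed, for each
  $n \in \NN$ we have $\stdint{a}(x^n) = (x^{n+1}-a^{n+1})/(n+1)$, and
  differentiating gives $x^n$. Hence $\partial \circ (\stdint{a} r) = (\partial
  \circ \stdint{a}) \circ r = r$, where on the right $r$ denotes the
  multiplication operator. Setting $Q := \stdint{a} r - P$, linearity of
  $\partial$ gives $\partial \circ Q = r - \partial \circ P$. Thus $\partial
  \circ P = r$ holds if and only if $\partial \circ Q = 0$, that is, if and
  only if $\im(Q) \subseteq \ker(\partial)$. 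Since $\bfk$ has characteristic
  zero, $\ker(\partial) = \bfk$, the $\bfk$-submodule of constant polynomials,
  identified with $\bfk$ through the structure map. Therefore $\partial \circ Q
  = 0$ is equivalent to $Q$ being a $\bfk$-linear map $\bfk[x] \to \bfk$, which
  under the identification of such operators with elements of $\dualmod{\bfk[x]}$
  means precisely $Q \in \bfk[x]^{\ast}$. Combining the two equivalences, we
  conclude that $\partial \circ P = r$ if and only if $\stdint{a} r - P \in
  \bfk[x]^{\ast}$.
\end{proof}
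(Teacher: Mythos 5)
Your proof is correct and follows essentially the same route as the paper: both reduce the differential law to $\partial\circ\mu=0$ for $\mu:=\stdint{a}r-P$ and then identify this with $\im(\mu)\subseteq\ker(\partial)=\bfk$. The extra details you supply (verifying $\partial\circ\stdint{a}=1_{\bfk[x]}$ and invoking characteristic zero for $\ker(\partial)=\bfk$) are fine but not a different argument.
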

\begin{proof}
  Since $\partial \circ \stdint{a}r=r$, a linear operator $P$ on
  $\bfk[x]$ satisfies $\partial \circ P=r$ if and only if $\partial \circ
  \mu = 0$ for $\mu := \stdint{a}r-P$. The latter identity holds if
  and only if $\im(\mu)$ is contained in~$\ker(\partial) = \bfk$.
\end{proof}

In analogy to the reals, we call the above linear functional~$\mu$ the
\emph{associated measure} of~$P$. Then the lemma says that the linear
operators satisfying the differential law are classified by their
associated measures in the sense that
\begin{equation*}
  \{P \in \End(\bfk[x]) \mid \partial \circ P = r \} = \stdint{a}r + \bfk[x]^\ast,
\end{equation*}
where the initialized point~$a$ may be chosen arbitrarily
(typically~$a=0$). But in the real case, a measure is more than an
arbitrary linear functional; for the algebraic situation this is
captured in the following result. Here and henceforth we employ the
abbreviation~$\star_{r,a}$ for~$\star_{\stdint{a}r}$,
and~$\star_r$ for~$\star_{r,0}$.

\begin{theorem}
  Let~$r \in \nonz{\bfk[x]}$ and~$a \in \bfk$ be arbitrary. Then the
  map defined by~$P\mapsto \stdint{a}r-P$ is a bijection between
 $\rbo_r(\bfk[x])$ and $\alghom{(\bfk[x],\star_{r,a})}$.
  \mlabel{thm:bij}
\end{theorem}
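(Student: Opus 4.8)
The plan is to obtain this bijection as an instance of the consistency dictionary in Proposition~\mref{pp:compcons}(\mref{it:fun}), applied to the distinguished Rota-Baxter operator $P_0 := \stdint{a}r$. Recall that $P_0$ is a Rota-Baxter operator (premultiplication of the standard integral $\stdint{a}$ by $r$, as discussed in Section~\mref{sec:basis}) and that it lies in $\rbo_r(\bfk[x])$ because $\partial \circ \stdint{a}r = r$; moreover $\star_{r,a}$ is by definition the double product $\star_{P_0}$ attached to $P_0$. The map in the statement is $P \mapsto P_0 - P$, with evident candidate inverse $\mu \mapsto P_0 - \mu$. Both are the reflection $Q \mapsto P_0 - Q$ on the ambient $\bfk$-module $\End(\bfk[x])$, hence automatically injective and mutually inverse as set maps; the only thing to prove is that each sends the prescribed domain into the prescribed codomain. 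The two tools for this are Lemma~\mref{lem:image} and Proposition~\mref{pp:compcons}(\mref{it:fun}).

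First I would verify that $P \mapsto P_0 - P$ maps $\rbo_r(\bfk[x])$ into $\alghom{(\bfk[x],\star_{r,a})}$. Let $P \in \rbo_r(\bfk[x])$ and set $\mu := P_0 - P$. From $\partial \circ P = r$ and Lemma~\mref{lem:image} we obtain $\mu \in \bfk[x]^{\ast}$. Viewing $\mu$ as an element of $\End(\bfk[x])$ through the inclusion $\bfk \hookrightarrow \bfk[x]$, the identity $P = P_0 - \mu$ together with $P \in \rbo(\bfk[x])$ means exactly that $\mu$ is consistent with $P_0$ in the sense of the definition. Proposition~\mref{pp:compcons}(\mref{it:fun}), applied to the Rota-Baxter operator $P_0$, then identifies the linear functionals consistent with $P_0$ precisely with $\alghom{(\bfk[x],\star_{P_0})} = \alghom{(\bfk[x],\star_{r,a})}$, so $\mu$ belongs to the claimed codomain.

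For the reverse inclusion I would run the same steps backwards: given $\mu \in \alghom{(\bfk[x],\star_{r,a})} = \alghom{(\bfk[x],\star_{P_0})}$, Proposition~\mref{pp:compcons}(\mref{it:fun}) again says $\mu$ is consistent with $P_0$, i.e.\ $P := P_0 - \mu$ lies in $\rbo(\bfk[x])$; and since $\mu \in \bfk[x]^{\ast}$, Lemma~\mref{lem:image} yields $\partial \circ P = r$. Hence $P \in \rbo_r(\bfk[x])$ with $P_0 - P = \mu$, which gives surjectivity; combined with injectivity this establishes the bijection. I do not expect a genuine obstacle here — the argument is essentially bookkeeping — so the only point that deserves a moment's care is the two-sided reading of an element $\mu$ of $\bfk[x]^{\ast}$: it must be treated simultaneously as a linear functional, so that Lemma~\mref{lem:image} and the notion of multiplicative functional apply, and as an endomorphism of $\bfk[x]$ with image in the constants $\bfk$, so that Proposition~\mref{pp:compcons}(\mref{it:fun}) applies; and, correspondingly, that $\star_{r,a}$ is indeed the double product of $\stdint{a}r$ by the abbreviation fixed just before the theorem.
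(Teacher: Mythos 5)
Your argument is correct and is essentially the paper's own proof: both rest on Lemma~\ref{lem:image} to translate the differential law into membership of $\stdint{a}r-P$ in $\bfk[x]^{\ast}$, and on Proposition~\ref{pp:compcons}(\ref{it:fun}) to identify the functionals consistent with $\stdint{a}r$ with $\alghom{(\bfk[x],\star_{r,a})}$, with injectivity being immediate. You merely spell out the well-definedness and surjectivity in both directions more explicitly than the paper does.
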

\begin{proof}
By Lemma~\mref{lem:image} and Proposition~\mref{pp:compcons}(\mref{it:fun}), we obtain an surjective map
$$\rbo_r(\bfk[x])\to \alghom{(\bfk[x],\star_{r,a})},\qquad
P\mapsto \stdint{a}r-P.$$ The map is injective since
$\stdint{a}r-P=\stdint{a}r-\tilde{P}$ implies $P=\tilde{P}$.
\end{proof}

Thus the above classification of
operators satisfying differential law may be refined to
$$\rbo_r(\bfk[x]) = \stdint{a}r - \alghom{(\bfk[x],\star_{r,a})}.$$ For
working out a more explicit description, we specialize to the monomial
case~$r = x^k$,
where we use the abbreviation~$\star_k$
for~$\star_{x^k}$. To this end, we will
determine~$\alghom{(\bfk[x],\star_k)}$, starting with~$k=0$.

\begin{theorem}
\begin{thmenumerate}
\item For any~$k \in \NN$, we have the
  isomorphism~$(\bfk[x],\star_{k}) \cong x^{k+1}\bfk[x]$ of nonunitary
  algebras.  \mlabel{it:iso}
\item There is a bijection $\alghom{(\bfk[x],\star_{0})} \to \bfk$
  that sends~$\mu$ to~$\mu(1)$. In particular, the value~$a := \mu(1)
  \in\bfk$ determines~$\mu$ uniquely by
\begin{equation}
\mu(x^n)=\frac{1}{n+1} \, a^{n+1}
\mlabel{eq:power}
\end{equation}
for all $n\in\NN$. Moreover, the codimension of $\ker(\mu)$ equals~$0$ for~$a=0$,
and~$1$ for~$a \neq 0$.  \mlabel{it:bij}
\end{thmenumerate}
\mlabel{thm:uniquev}
\end{theorem}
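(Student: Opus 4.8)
The plan is to obtain part~(a) directly from the general fact recalled in Section~\ref{sec:basis} that a weight-zero Rota-Baxter operator $P$ on $R$ is a homomorphism of nonunitary algebras from $(R,\star_P)$ to $R$, and then to deduce part~(b) from part~(a). For part~(a) I would take $P:=\stdint{0}x^k$, so that $P(x^n)=x^{n+k+1}/(n+k+1)$ and $\star_P=\star_k$. This $P$ is injective on $\bfk[x]$, since it sends the basis $\{x^n\}_{n\in\NN}$ to a family whose members have pairwise distinct degrees, and its image is precisely $x^{k+1}\bfk[x]$, the nonunitary subalgebra spanned by $\{x^m\mid m\ge k+1\}$. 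Since $P$ satisfies Eq.~(\ref{eq:RB}) with $\lambda=0$, one has $P(u\star_k v)=P(uP(v))+P(P(u)v)=P(u)P(v)$, so $P$ is an algebra homomorphism from $(\bfk[x],\star_k)$ onto $x^{k+1}\bfk[x]$; being bijective, it is the desired isomorphism of nonunitary $\bfk$-algebras.

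For part~(b) I would specialize part~(a) to $k=0$: the operator $\stdint{0}$ is an isomorphism $(\bfk[x],\star_0)\to x\bfk[x]$, so pulling back along it gives a bijection between $\alghom{(\bfk[x],\star_0)}$ and $\alghom{(x\bfk[x])}$, the set of multiplicative linear functionals on the nonunitary algebra $x\bfk[x]$. Every element of $x\bfk[x]$ is a $\bfk$-linear combination of the products $x^n=x\cdots x$ with $n\ge 1$, so a multiplicative functional $\nu$ is determined by the scalar $t:=\nu(x)$ via $\nu(x^n)=t^n$; conversely every $t\in\bfk$ yields such a $\nu$, which I would confirm by expanding an arbitrary product $fg$ into monomials. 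Hence $\nu\mapsto\nu(x)$ is a bijection $\alghom{(x\bfk[x])}\to\bfk$. Transporting back through $\stdint{0}$, the functional $\mu$ on $(\bfk[x],\star_0)$ corresponding to $\nu$ is $\mu=\nu\circ\stdint{0}$, so $\mu(1)=\nu(x)=t$ and $\mu(x^n)=\nu\big(x^{n+1}/(n+1)\big)=t^{n+1}/(n+1)$, which gives the asserted bijection $\mu\mapsto\mu(1)=a$ together with Eq.~(\ref{eq:power}). (One could equally argue directly, without part~(a): evaluating $\mu(x^m\star_0 1)=\mu(x^m)\mu(1)$ yields the recursion $(m+2)\,\mu(x^{m+1})=(m+1)\,a\,\mu(x^m)$ with $a=\mu(1)$, whose unique solution is $\mu(x^n)=a^{n+1}/(n+1)$, and multiplicativity on all of $(\bfk[x],\star_0)$ then follows from $x^m\star_0 x^n=\tfrac{m+n+2}{(m+1)(n+1)}\,x^{m+n+1}$.)

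Finally, for the codimension statement: if $a=0$ then $\mu(x^n)=0$ for every $n$, hence $\mu=0$, $\ker(\mu)=\bfk[x]$, and the codimension is $0$; if $a\ne 0$ then $\mu(1)\ne 0$, so $\mu$ is a nonzero linear functional on $\bfk[x]$ and $\bfk[x]/\ker(\mu)\cong\bfk$, giving codimension $1$. I expect the whole argument to be routine; the one point requiring genuine (if mild) care is the converse half of the description of $\alghom{(x\bfk[x])}$ — namely that $\nu(x^n):=t^n$ extends to a \emph{multiplicative} functional on all of $x\bfk[x]$, not merely on monomials — which reduces to the bilinear identity $\sum_{i,j}a_ib_j t^{i+j}=\big(\sum_i a_i t^i\big)\big(\sum_j b_j t^j\big)$.
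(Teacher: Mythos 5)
Your proof is correct, and it reaches the same isomorphism as the paper but by a slightly different route. For part~(a) the paper exhibits the basis $u_n := n x^{n-k-1}$ ($n \ge k+1$) and verifies $u_m \star_k u_n = u_{m+n}$ by direct computation; your map $P = \stdint{0}x^k$ is exactly the same linear bijection (since $P(nx^{n-k-1}) = x^n$), but you justify multiplicativity by invoking the general fact that a weight-zero Rota-Baxter operator is a homomorphism $(R,\star_P) \to R$ — a fact the paper records in Section~\ref{sec:basis} but does not actually use in its proof. This is a bit more conceptual and saves the computation. For part~(b) the paper argues via the universal property of the free nonunitary commutative algebra on one generator to get the bijection $\mu \mapsto \mu(1)$, and then establishes Eq.~(\ref{eq:power}) by a separate induction using $1 \star_0 x^n = \tfrac{n+2}{n+1}x^{n+1}$; you instead transport the explicit description $\nu(x^n)=t^n$ of $\alghom{(x\bfk[x])}$ through $\stdint{0}$, which yields the bijection and the closed formula $\mu(x^n)=a^{n+1}/(n+1)$ in one stroke, with no induction (your parenthetical recursion is essentially the paper's inductive argument). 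Both treatments of the codimension claim are the same. The only point where you should be slightly more explicit is the converse half of your description of $\alghom{(x\bfk[x])}$ — that every $t\in\bfk$ does give a multiplicative functional — but as you note this is the routine bilinear identity, and it is in any case exactly what the paper's appeal to freeness packages up.
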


\begin{proof}
  (\mref{it:iso}) Note that $\{u_n:=nx^{n-k-1}\,|\,n\geq k+1\}$ is
  a $\bfk$-linear basis of $\bfk[x]$ with
\begin{align*}
 u_m\star_{k} u_n
& = mx^{m-k-1} \stdint{0}(x^k \cdot nx^{n-k-1}) + nx^{n-k-1}
\stdint{0}(x^k \cdot mx^{m-k-1})\\
& = mx^{m-k-1}x^n + nx^{n-k-1}x^m
=(m+n)x^{m+n-k-1}=u_{m+n}.
\end{align*}
Thus the $\bfk$-linear map induced by~$u_n \mapsto x^n \; (n \geq
k+1)$ is an isomorphism~$(\bfk[x],\star_{k})\to x^{k+1}\bfk[x]$
of nonunitary $\bfk$-algebras as claimed.  \smallskip

\noindent
(\mref{it:bij}) Since $x\bfk[x]$ is the free nonunitary commutative
$\bfk$-algebra on~$x$, so is $(\bfk[x],\star_0)$ by the isomorphism
from~(\mref{it:iso}). Then the bijection follows from the universal
property of free nonunitary commutative~$\bfk$-algebra on~$x$. Note
that under the isomorphism from~(\mref{it:iso}), the generator~$x$
of~$x\bfk[x]$ corresponds to the generator~$1 = u_1$ of~$(\bfk[x],
\star_0)$.

To prove Eq.~(\mref{eq:power}), we use induction on $n$. For the base
case~$n=0$, we have $\mu(1)=a$ by the definition of~$a$. Now
suppose Eq.~(\mref{eq:power}) has been proved for a fixed~$n$.  Since
$$1\star_{0} x^n=\stdint{0}(x^n) + x^n \, \stdint{0}(1)=\frac{n+2}{n+1}
\, x^{n+1}$$ and $\mu$ is an $\bfk$-algebra homomorphism, we have
$$\mu\left(\frac{n+2}{n+1}x^{n+1}\right)
=\mu(1\star_{0}x^n)=\mu(1) \, \mu(x^n) = \frac{1}{n+1} \, a^{n+2},$$ applying
the induction hypothesis in the last step. Thus we
obtain~$\mu(x^{n+1})=\frac{1}{n+2} \, a^{n+2}$, and the induction is
complete. The last statement follows since the codimension of~$\ker(\mu)$ equals
the dimension of~$\im(\mu)$ and~$\mu$ is surjective if and only if~$\mu(1)\neq
0$.
\end{proof}

At this juncture, the results accumulated are sufficient for
classifying all Rota-Baxter operators~$P$ satisfying the differential
relation~$\partial \circ P = 1_{\bfk[x]}$. This is an important
special case since it states that
all \emph{indefinite integrals} are
analytically modelled.

\begin{theorem}
  We have~$\rbo_1(\bfk[x]) = \big\{\stdint{a} \mid a \in\bfk\big\}$.
\end{theorem}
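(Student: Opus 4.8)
The plan is to read off the result as a corollary of Theorem~\ref{thm:bij} and Theorem~\ref{thm:uniquev}, so there is essentially nothing new to prove; the work is just bookkeeping. First I would specialize Theorem~\ref{thm:bij} to the monomial case $r = x^0 = 1$ with initialization point $a = 0$. Since in the notation of the previous paragraphs $\star_{1,0} = \star_{\stdint{0}\cdot 1} = \star_{\stdint{0}} = \star_0$, this yields a bijection
$$\rbo_1(\bfk[x]) \;\longrightarrow\; \alghom{(\bfk[x],\star_0)}, \qquad P \longmapsto \stdint{0} - P.$$
Hence it suffices to identify, for each $\mu \in \alghom{(\bfk[x],\star_0)}$, the corresponding operator $P = \stdint{0} - \mu$.

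Next I would invoke Theorem~\ref{thm:uniquev}(\ref{it:bij}): every $\mu \in \alghom{(\bfk[x],\star_0)}$ is completely determined by the scalar $a := \mu(1) \in \bfk$, via $\mu(x^n) = \frac{1}{n+1} a^{n+1}$ for all $n \in \NN$. The key (one-line) observation is that this functional is exactly $\stdint{0} - \stdint{a}$, since
$$(\stdint{0} - \stdint{a})(x^n) = \frac{x^{n+1}}{n+1} - \frac{x^{n+1} - a^{n+1}}{n+1} = \frac{a^{n+1}}{n+1} = \mu(x^n)$$
for every $n \in \NN$. Feeding this back through the bijection above, the operator $P$ with $\stdint{0} - P = \mu$ is precisely $P = \stdint{0} - (\stdint{0} - \stdint{a}) = \stdint{a}$. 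This establishes the inclusion $\rbo_1(\bfk[x]) \subseteq \{\stdint{a} \mid a \in \bfk\}$.

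For the reverse inclusion I would simply recall from Section~\ref{sec:basis} that each $\stdint{a}$ is a weight-zero Rota-Baxter operator satisfying $\partial \circ \stdint{a} = 1_{\bfk[x]}$, so it lies in $\rbo_1(\bfk[x])$; equivalently, it is the image under the bijection of the measure $\stdint{0} - \stdint{a}$, which is one of the functionals classified by Theorem~\ref{thm:uniquev}(\ref{it:bij}). Combining the two inclusions gives the claimed equality. I do not anticipate any genuine obstacle here: the only points requiring a little care are matching the abbreviation $\star_{r,a}$ with $\star_0$ when $r=1$, and recognizing that the associated measure prescribed abstractly by Theorem~\ref{thm:uniquev} is the concrete definite integral $\stdint{0} - \stdint{a}$.
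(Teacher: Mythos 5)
Your proposal is correct and follows essentially the same route as the paper: apply Theorem~\ref{thm:bij} with $r=1$, $a=0$ to write $P=\stdint{0}-\mu$ for some $\mu\in\alghom{(\bfk[x],\star_0)}$, then use Theorem~\ref{thm:uniquev} to get $\mu(x^n)=a^{n+1}/(n+1)$ with $a=\mu(1)$ and conclude $P=\stdint{a}$. Your identification of $\mu$ with $\stdint{0}-\stdint{a}$ is just a repackaging of the paper's final display, so there is nothing substantively different here.
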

\begin{proof}
  The inclusion from right to left is clear, so assume~$P \in
  \rbo_1(\bfk[x])$. By Theorem~\mref{thm:bij}, there exists~$\mu \in
  \alghom{(\bfk[x],\star_{0})}$ such that~$P=\stdint{0}-\mu$. Setting
  now~$a := \mu(1)$, Theorem~\mref{thm:uniquev} asserts
  that~$\mu(x^n)=\frac{1}{n+1} \, a^{n+1}$ for all~$n \in \NN$. Then
  $$P(x^n)=\stdint{0}(x^n)-\mu(x^n) = \frac{x^{n+1}-a^{n+1}}{n+1} = \stdint{a}(x^n)$$
  so that~$P = \stdint{a}$, and the inclusion from left to right is established.
\end{proof}

For classifying the Rota-Baxter operators~$P$ with~$\partial \circ P =
x^k \; (k > 0)$ we must determine all algebra homomorphisms~$\mu$ with
respect to the multiplication~$\star_k$. At this point, we have to
restrict ourselves to the field~$\bfk = \RR$ since we shall make use
of the order on the \emph{reals} in the next two lemmas.
%\li{So they could also work for ordered fields?}

\begin{lemma}
  Let $\mu\colon (\RR[x],\star_{2{\ell}+1})\rightarrow \RR$ be an
  $\RR$-algebra homomorphism with $\ell\geq 0$. Then we have~$\mu(1)
  \geq 0$.  \mlabel{lem:valueone}
\end{lemma}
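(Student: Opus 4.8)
The plan is to pin down the sign of $\mu(1)$ by feeding $\mu$ a suitably chosen iterated $\star_{2\ell+1}$-product. Write $k=2\ell+1$, so $k+1=2(\ell+1)$. Since $\mu$ is multiplicative for the associative product $\star_k$, we have $\mu(1)^3=\mu\big(1\star_k 1\star_k 1\big)$, and a direct computation (using $\stdint{0}(x^k)=x^{k+1}/(k+1)$) gives first $1\star_k 1=\tfrac{2}{k+1}x^{k+1}$ and then $1\star_k 1\star_k 1=\tfrac{3}{(k+1)^2}x^{2(k+1)}$. Hence
$$\mu(1)^3=\frac{3}{(k+1)^2}\,\mu\!\left(x^{2(k+1)}\right),$$
so it suffices to prove $\mu(x^{2(k+1)})\ge 0$.

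For this second step I would use the oddness of $k$: since $2(k+1)=4(\ell+1)$, the monomial $x^{2(k+1)}$ is itself realized as a $\star_k$-square, namely $x^{\ell+1}\star_k x^{\ell+1}=2x^{\ell+1}\stdint{0}(x^{k+\ell+1})=\tfrac{2}{3(\ell+1)}x^{4(\ell+1)}$. Applying the homomorphism $\mu$ then gives $\mu(x^{\ell+1})^2=\tfrac{2}{3(\ell+1)}\mu(x^{2(k+1)})$, so $\mu(x^{2(k+1)})=\tfrac{3(\ell+1)}{2}\mu(x^{\ell+1})^2\ge 0$. Combining this with the previous display yields $\mu(1)^3\ge 0$, and since cubing is strictly increasing on $\RR$ we conclude $\mu(1)\ge 0$.

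The delicate point is the bookkeeping of exponents — i.e.\ recognising that one must cube rather than square. Squaring only gives $\mu(1)^2=\tfrac{2}{k+1}\mu(x^{k+1})$, which fixes the sign of $\mu(x^{k+1})$ but not of $\mu(1)$, and no polynomial $f$ has $f\star_k f$ proportional to $x^{k+1}$ (every $\star_k$-square has degree at least $k+2$). Passing to the $\star_k$-cube lands one at degree $2(k+1)=4(\ell+1)$, which \emph{is} a $\star_k$-square exactly because $k$ is odd; that hypothesis is essential, as already for $k=0$ one has $\mu(1)\in\RR$ arbitrary by Theorem~\mref{thm:uniquev}. The same argument can alternatively be phrased through the isomorphism $(\RR[x],\star_k)\cong x^{k+1}\RR[x]$ of Theorem~\mref{thm:uniquev}: there $\mu$ corresponds to a nonunital $\RR$-algebra homomorphism $\nu$ on the ideal $x^{k+1}\RR[x]$ with $\mu(1)=\tfrac{1}{k+1}\nu(x^{k+1})$, and $\nu(x^{k+1})^3=\nu(x^{3(k+1)})=\nu\big((x^{3(\ell+1)})^2\big)=\nu(x^{3(\ell+1)})^2\ge 0$ since $3(\ell+1)\ge 2(\ell+1)=k+1$ keeps $x^{3(\ell+1)}$ inside the ideal.
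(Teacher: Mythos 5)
Your proof is correct and is essentially the paper's own argument: both evaluate $\mu$ on $x^{4(\ell+1)}$ in two ways, once via the triple $\star_{2\ell+1}$-product of $1$ (the paper does this in two steps, computing $1\star 1$ and then $1 \star x^{2\ell+2}$) and once via the $\star_{2\ell+1}$-square of $x^{\ell+1}$, concluding that $\mu(1)^3 \ge 0$. (Only a side remark in your discussion is off: $1\star_k 1$ is itself proportional to $x^{k+1}$, so it is not true that every $\star_k$-square has degree at least $k+2$; this does not affect the argument.)
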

\begin{proof}
  Since~$1\star_{2{\ell}+1} 1 = 2\, \stdint{0}(x^{2{\ell}+1}) =
  x^{2{\ell}+2}/(\ell+1)$ and $\mu$ is an $\RR$-algebra homomorphism,
  we obtain
  ~$c^2 = \mu(1\star_{2{\ell}+1}1) =
  \mu(x^{2{\ell}+2}/(\ell+1))$, where we have set~$c := \mu(1)$. Hence
  we get the relation~$\mu(x^{2{\ell}+2})=({\ell}+1)c^2$. We have also
\begin{equation*}
  1\star_{2{\ell}+1}x^{2{\ell}+2} = \stdint{0}(x^{4{\ell}+3})
  +x^{2{\ell}+2} \, \stdint{0}(x^{2{\ell}+1})
  = \frac{3}{4{\ell}+4} \, x^{4{\ell}+4},
\end{equation*}
which implies by the~$\RR$-algebra homomorphism property and the
previous relation that
\begin{equation}
\mu(x^{4{\ell}+4})=\frac{4}{3}({\ell}+1)^2c^3.
\mlabel{eq:nu3}
\end{equation}
Next we observe that $x^{{\ell}+1}\star_{2{\ell}+1}x^{{\ell}+1}
=2x^{{\ell}+1} \, \stdint{0}(x^{3{\ell}+2}) = (2/3) \,
x^{4{\ell}+4}/(\ell+1)$. Setting $\tilde{c} := \mu(x^{{\ell}+1})$,
this yields yet another relation
\begin{equation}
\mu(x^{4{\ell}+4})=\frac{3}{2}(\ell+1) \, \tilde{c}^2.
\mlabel{eq:nuvaluet}
\end{equation}
Combining Eqs.~(\mref{eq:nu3}) and~(\mref{eq:nuvaluet}), we obtain
$\frac{4}{3}({\ell}+1)^2c^3= \frac{3}{2} (\ell+1) \tilde{c}^2$ and
thus $c = \sqrt[3]{\frac{9}{8({\ell}+1)}\tilde{c}^2}\geq0$.
\end{proof}

\begin{lemma}
  Let $\mu\colon (\RR[x],\star_k)\rightarrow\RR$ be an $\RR$-algebra
  homomorphism for~$k\in\NN$. Then there exists a number $a \in\RR$
  such that $\mu(1) = a^{k+1}/(k+1)$.  \mlabel{lem:nuvall}
\end{lemma}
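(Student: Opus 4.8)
The plan is to reduce the statement to a question about the existence of real roots and then invoke the preceding lemma for the only case where that existence is not automatic.

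First I would set $c := \mu(1)$ and observe that exhibiting a real number $a$ with $\mu(1) = a^{k+1}/(k+1)$ is the same as exhibiting a real $(k+1)$-th root of $(k+1)c$; in other words, the claim holds precisely when $(k+1)c$ lies in the image of the power map $t \mapsto t^{k+1}$ on $\RR$. So the whole lemma is really about locating $(k+1)c$ inside that image, and nothing but the value $\mu(1)$ enters.

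Next I would split on the parity of $k$. If $k$ is even, then $k+1$ is odd and $t \mapsto t^{k+1}$ is a bijection of $\RR$; hence $a := \sqrt[k+1]{(k+1)c}$ is a well-defined real number and the claim holds with no constraint on $c$ whatsoever. If $k$ is odd, write $k = 2\ell+1$ with $\ell \ge 0$; then $k+1$ is even and the image of $t \mapsto t^{k+1}$ is exactly $\RR_{\ge 0}$, so we need $(k+1)c \ge 0$, i.e.\ $\mu(1) \ge 0$. This is precisely the content of Lemma~\mref{lem:valueone} applied with this $\ell$, and once we know $\mu(1) \ge 0$ the same recipe $a := \sqrt[k+1]{(k+1)c}$ again produces a real number with $a^{k+1}/(k+1) = \mu(1)$.

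The only real obstacle is the odd-$k$ case, and it has already been removed: Lemma~\mref{lem:valueone} was established first precisely so that it can be quoted here, and it is the sole nontrivial input — the rest is just the elementary fact that every real (when $k+1$ is odd) or every nonnegative real (when $k+1$ is even) admits a real $(k+1)$-th root. One might worry that the algebra-homomorphism property of $\mu$ imposes some further constraint on $\mu(1)$ obstructing the representation $a^{k+1}/(k+1)$, but since such an $a$ exists for every admissible sign of $\mu(1)$, no such obstruction can occur; in particular the case $k=0$ recovers $\mu(1)=a$ from Theorem~\mref{thm:uniquev}.
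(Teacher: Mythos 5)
Your proposal is correct and follows exactly the paper's own argument: set $c := \mu(1)$, split on the parity of $k$, take the odd root directly when $k$ is even, and invoke Lemma~\ref{lem:valueone} to get $c \geq 0$ when $k = 2\ell+1$ so the even root exists in $\RR$. Nothing further is needed.
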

\begin{proof}
  We set~$c := \mu(1)$ and~$a :=
  \!\!\sqrt[k+1]{(k+1)c}$. If~$k=2\ell+1$ with~$\ell\in\NN$,
  Lemma~\mref{lem:valueone} implies that~$c\geq0$ and we may extract
  an even root to obtain~$a \in \RR$. If on the other hand~$k=2\ell$
  for~$\ell\in\NN$, the root in~$a$ is odd and hence clearly~$a \in
  \RR$ also in this case.
\end{proof}

The number~$a$ ensured by the previous lemma serves to characterize
the  associated measure~$\mu$ of the Rota-Baxter operator underlying the
double product~$\star_k$. Analytically speaking, Analytically speaking, $\mu(1)$ is the
Riemann integral over~$[0,a]$.

\begin{prop}
  Let $\mu\colon (\RR[x],\star_k)\rightarrow \RR$ be an $\RR$-algebra
  homomorphism. Then there exists a number~$a\in\RR$ such that
  $\mu(x^n) = a^{n+k+1}/(n+k+1)$ for all~$n\in\NN$. In
  particular, $\mu$ is uniquely determined by~$a$.
  \mlabel{prop:allhomk}
\end{prop}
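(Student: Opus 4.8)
The plan is to convert the hypothesis that $\mu$ is an $\RR$-algebra homomorphism into a single scalar functional equation for the sequence $c_n := \mu(x^n)$ and then solve it. Since $\star_k = \star_{\stdint{0}x^k}$ and $(\stdint{0}x^k)(x^n) = x^{n+k+1}/(n+k+1)$, a one-line computation gives
\[
 x^m \star_k x^n = \Big(\tfrac{1}{m+k+1}+\tfrac{1}{n+k+1}\Big)\,x^{m+n+k+1},
\]
so applying $\mu$ yields the master relation
\[
 c_m c_n = \Big(\tfrac{1}{m+k+1}+\tfrac{1}{n+k+1}\Big)\,c_{m+n+k+1},\qquad m,n\in\NN,
\]
which I shall call $(\dagger)$. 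Everything below is extracted from $(\dagger)$ together with Lemma~\mref{lem:nuvall}.

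First I would dispose of the degenerate case $\mu(1)=0$. Putting $n=0$ in $(\dagger)$, and noting the rational coefficient is nonzero, forces $c_{m+k+1}=0$ for all $m\in\NN$; then putting $m=n=j$ for $1\le j\le k$ gives $c_j^2 = \tfrac{2}{j+k+1}\,c_{2j+k+1}=0$, hence $c_j=0$. So $\mu\equiv 0$, and the conclusion holds with $a=0$.

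For the main case $\mu(1)\ne 0$, use Lemma~\mref{lem:nuvall} to choose $a_0\in\RR$ with $\mu(1)=a_0^{k+1}/(k+1)$, necessarily $a_0\ne 0$, and renormalize by setting $d_n := (n+k+1)\,c_n\,a_0^{-(n+k+1)}$, so $d_0=1$. Substituting $c_n = d_n a_0^{n+k+1}/(n+k+1)$ into $(\dagger)$ and clearing the common factor collapses it to the clean identity $d_m d_n = d_{m+n+k+1}$. Taking $m=0$ gives $d_n = d_{n+k+1}$, and combining the two yields $d_m d_n = d_{m+n}$ for all $m,n\in\NN$: the sequence $(d_n)$ is multiplicative. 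Hence $d_n = d_1^{\,n}$, and periodicity forces $d_1^{\,k+1}=d_{k+1}=d_0=1$. Setting $a:=d_1 a_0$ we then get $a^{n+k+1}=d_1^{\,n+k+1}a_0^{n+k+1}=d_1^{\,n}a_0^{n+k+1}=d_n a_0^{n+k+1}$, i.e.\ $\mu(x^n)=c_n=a^{n+k+1}/(n+k+1)$ for every $n$; the final assertion is immediate from this formula.

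I expect the crux to be the realization that one should \emph{not} attempt to prove $d_n\equiv 1$: this is genuinely false when $k$ is odd, where $d_1$ may equal $-1$ and produces the homomorphism attached to $-a_0$ rather than $a_0$. The right move is the weaker statement $d_1^{\,k+1}=1$, which is exactly what is needed to absorb the ambiguous real root of unity into the choice of $a$. A secondary, more routine point is that Lemma~\mref{lem:nuvall} — and through it the positivity of $\mu(1)$ supplied by Lemma~\mref{lem:valueone} in the odd case — is genuinely invoked, since it guarantees that an $a_0$ with $\mu(1)=a_0^{k+1}/(k+1)$ exists at all; without it the renormalization $d_n$ would not even be defined.
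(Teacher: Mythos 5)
Your proof is correct, and it reaches the conclusion by a genuinely different route than the paper. The paper argues by induction on~$n$: Lemma~\ref{lem:nuvall} supplies the base value $\mu(1)=a^{k+1}/(k+1)$, and the inductive step applies~$\mu$ to the single family of products $1\star_k x^{n-k}=\tfrac{n+k+2}{(n+1)(k+1)}\,x^{n+1}$. You instead write down the full two-variable functional equation $(\dagger)$ for $c_n=\mu(x^n)$, normalize by the real $(k{+}1)$-st root~$a_0$ furnished by Lemma~\ref{lem:nuvall}, and reduce to a multiplicative, $(k{+}1)$-periodic sequence $d_n=d_1^{\,n}$ with $d_1^{\,k+1}=1$, finally absorbing the real root of unity~$d_1$ into~$a$. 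Both arguments invoke Lemma~\ref{lem:nuvall} (and through it the order on~$\RR$ via Lemma~\ref{lem:valueone}) at exactly the same single point. What your version buys is uniformity: it pins down $\mu(x^n)$ for the low exponents $1\le n\le k$ as well, which the paper's step does not literally cover since $x^{n-k}$ is then not a monomial --- indeed every $\star_k$-product of monomials lies in $x^{k+1}\RR[x]$, so these values can only be reached through the two-variable relation, as in your $m=0$ and $m=n$ specializations; it also treats $\mu(1)=0$ explicitly and makes visible the sign ambiguity $d_1=\pm 1$ for odd~$k$, which is exactly the ambiguity between the measures attached to~$a_0$ and~$-a_0$. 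The paper's induction is more economical to state but leaves the small-exponent cases to the reader.
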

\begin{proof}
  We prove the claim by induction on $n \in \NN$. In the base
  case~$n=0$, Lemma~\mref{lem:nuvall} yields $\mu(1) =
  a^{k+1}/(k+1)$. Suppose now the claim has been proved up to a
  fixed~$n$. Since
  $$1\star_k x^{n-k} = \stdint{0}(x^n) +x^{n-k} \, \stdint{0}(x^k) =
  \frac{n+k+2}{(n+1)(k+1)} \, x^{n+1}$$ and $\mu$ is an $\RR$-algebra
  homomorphism, we have
  $$\mu\bigg(\frac{n+k+2}
  {(n+1)(k+1)} \, x^{n+1}\bigg) =\mu(1 \star_kx^{n-k}) = \mu(1) \,
  \mu(x^{n-k}) = \frac{1}{(n+1)(k+1)} \, a^{n+k+2},$$ where we have
  applied the induction hypothesis in the last step since~$n-k\leq
  n$. But this gives immediately~$\mu(x^{n+1}) = a^{n+k+2}/(n+k+2)$,
  which completes the induction.
\end{proof}

Since the number~$a$ of the proposition above characterizes the
associated measures, we obtain now the \emph{desired classification}
of the Rota-Baxter operators~$P$ on~$\RR[x]$ that satisfy the
differential relation~$\partial \circ P = x^k$. The number~$a$ plays
the role of the initialization point of the integral (we regain
the standard integral~$\stdint{0}$ for~$a=0$ since then the associated
measure is zero).

\begin{theorem}
  We have~$\rbo_{x^k}(\RR[x]) = \big\{ \stdint{a}x^k \mid a \in \RR
  \big\}$ for any~$k \in \NN$.
  \mlabel{thm:main}
\end{theorem}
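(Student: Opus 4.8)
The plan is to combine the structural bijection from Theorem~\mref{thm:bij} with the explicit description of the algebra homomorphisms obtained in Proposition~\mref{prop:allhomk}. The inclusion~$\big\{ \stdint{a}x^k \mid a \in \RR \big\} \subseteq \rbo_{x^k}(\RR[x])$ is immediate: we already observed in Section~\mref{sec:basis} that each~$\stdint{a}x^k$ is an analytically modelled (hence Rota-Baxter) operator, and~$\partial \circ (\stdint{a}x^k) = x^k$ by the fundamental theorem of calculus. So the content is the reverse inclusion.

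For the reverse inclusion, I would take an arbitrary~$P \in \rbo_{x^k}(\RR[x])$. By Theorem~\mref{thm:bij} applied with~$r = x^k$ and initialization point~$a=0$, there is a (unique) algebra homomorphism~$\mu \in \alghom{(\RR[x],\star_{k})}$ with~$P = \stdint{0}x^k - \mu$; equivalently~$\mu = \stdint{0}x^k - P$ is the associated measure of~$P$. Proposition~\mref{prop:allhomk} then hands us a real number~$a$ with~$\mu(x^n) = a^{n+k+1}/(n+k+1)$ for all~$n \in \NN$. I would then simply compute, for each~$n \in \NN$,
\begin{equation*}
  P(x^n) = \stdint{0}(x^{n+k}) - \mu(x^n) = \frac{x^{n+k+1}}{n+k+1} - \frac{a^{n+k+1}}{n+k+1} = \frac{x^{n+k+1} - a^{n+k+1}}{n+k+1} = \stdint{a}(x^{n+k}) = (\stdint{a}x^k)(x^n),
\end{equation*}
using the definition of~$\stdint{a}$ recalled in the introduction. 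Since this holds on the monomial basis and both sides are linear, $P = \stdint{a}x^k$, which lies in the asserted set. This completes both inclusions.

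In fact there is essentially no obstacle left: all the real work has been done in the preceding results. The one point that requires the restriction to~$\bfk = \RR$ is the existence of the number~$a$ in Proposition~\mref{prop:allhomk}, which ultimately rests on Lemma~\mref{lem:valueone} (extracting a real cube root forces~$\mu(1) \ge 0$ in the odd-weight case, so that the~$(k+1)$-st root is real). Thus the proof is a short assembly step, and the only thing to be careful about is bookkeeping the indices and the choice of initialization point so that Theorem~\mref{thm:bij} and Proposition~\mref{prop:allhomk} refer to compatible data.
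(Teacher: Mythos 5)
Your proposal is correct and follows exactly the same route as the paper's own proof: apply Theorem~\mref{thm:bij} with~$r=x^k$ and initialization point~$0$ to obtain the associated measure~$\mu$, invoke Proposition~\mref{prop:allhomk} to get the number~$a$, and then compute~$P(x^n)=\stdint{a}(x^{n+k})$ on the monomial basis. No differences of substance.
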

\begin{proof}
  The inclusion from right to left is clear, so assume~$P \in
  \rbo_{x^k}(\RR[x])$. Then Theorem~\mref{thm:bij} yields an
  $\RR$-algebra homomorphism $\mu\colon(\RR[x],\star_k)\rightarrow
  \RR$ such that $P=\stdint{0} x^k-\mu$. By
  Proposition~\mref{prop:allhomk}, there exists a number~$a\in\RR$
  such that~$\mu(x^n) = a^{n+k+1}/(n+k+1)$. Thus we have
  $$P(x^n)=\stdint{0}(x^{n+k}) - \mu(x^n) = (x^{n+k+1} -
  a^{n+k+1})/(n+k+1) = \stdint{a}(x^{n+k}),$$ so that~$P =
  \stdint{a}x^k$, and the inclusion from left to right is
  established.
\end{proof}

As mentioned earlier, it is tempting to generalize the above result
from monomials to \emph{arbitrary polynomials}. Together with
Theorem~\mref{thm:inj-diff-law}, this would imply that
\begin{equation*}
  \rbo_*(\bfk[x]) = \bigcup_{r \in \nonz{\bfk[x]}} \rbo_r(\bfk[x]) =
  \{ \stdint{a}r \mid a \in \bfk, r \in \nonz{\bfk[x]} \} ,
\end{equation*}
for the case~$\bfk = \RR$. The missing inclusion is as follows.

\begin{conjecture}
  We have~$\rbo_{r}(\RR[x]) \subseteq \big\{ \stdint{a}r \mid a \in \RR \big\}$
  for any~$r \in \nonz{\RR[x]}$.  \mlabel{conj:rbor}
\end{conjecture}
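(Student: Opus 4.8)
We work throughout with $\bfk=\RR$ and fix $r\in\nonz{\RR[x]}$ of degree $k$ with leading coefficient $c\neq 0$; the case $k=0$ is already covered, so we may assume $k\ge 1$. By the bijection of Theorem~\mref{thm:bij} (applied with $a=0$), proving the conjecture amounts to showing that every $\RR$-algebra homomorphism $\mu\colon(\RR[x],\star_{r})\to\RR$ is of the form $\mu_a$, where $\mu_a(f):=\int_0^a f(t)\,r(t)\,dt$ for some $a\in\RR$; indeed, if $P\in\rbo_r(\RR[x])$ has associated measure $\mu=\stdint{0}r-P$ and $\mu=\mu_a$, then $P(x^n)=\stdint{0}(x^nr)-\mu_a(x^n)=\stdint{a}(x^nr)$, i.e.\ $P=\stdint{a}r$. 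That each $\mu_a$ is a $\star_r$-homomorphism is immediate: if $U,V$ are the antiderivatives of $ur,vr$ vanishing at $0$, then $\mu_a(u\star_rv)=\int_0^a(UV)'=U(a)V(a)=\mu_a(u)\mu_a(v)$.

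The first step is to transport the problem to ordinary multiplication on a subalgebra of $\RR[x]$. As recalled in Section~\mref{sec:basis}, the premultiplied operator $P_r:=\stdint{0}r$, i.e.\ $f\mapsto\stdint{0}(rf)$, is a Rota-Baxter operator of weight $0$, so $P_r\colon(\RR[x],\star_r)\to(\RR[x],\cdot)$ is an algebra homomorphism; it is injective because $r\neq 0$ and $\RR[x]$ is a domain, hence an isomorphism onto the nonunitary subalgebra $B:=P_r(\RR[x])=\{g\in\RR[x]\mid g(0)=0,\ r\mid g'\}$. Therefore $\mu\mapsto\mu\circ P_r^{-1}$ identifies $\alghom{(\RR[x],\star_r)}$ with $\alghom{B}$, and under this identification $\mu_a$ corresponds to the point evaluation $\mathrm{ev}_a|_B$ (since $\mu_a=\mathrm{ev}_a\circ P_r$). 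Thus it suffices to show that every multiplicative functional on $B$ is a point evaluation at a real point. Two features of $B$ will be used: it contains $R_m:=\stdint{0}(x^mr)$ for every $m\ge 0$, with $\deg R_m=m+k+1$, so $B$ contains polynomials of every degree $\ge k+1$; and $B$ is an ideal in its unital hull $\bar B:=\RR\cdot 1+B\subseteq\RR[x]$, so a nonzero multiplicative functional on $B$ extends uniquely to a unital $\RR$-algebra homomorphism $\bar B\to\RR$ (the zero functional corresponds to $a=0$).

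Next one shows $\RR[x]$ is integral over $\bar B$: since $\tfrac{k+1}{c}R_0\in\bar B$ is monic of degree $k+1$, a downward induction on degree gives $\RR[x]=\bar B+\bar B x+\cdots+\bar B x^{k}$, so $\RR[x]$ is a finitely generated $\bar B$-module and in particular integral over $\bar B$. Now let $\phi\colon\bar B\to\RR$ be a unital homomorphism; its kernel $\frakm$ is a maximal ideal. By lying-over for integral extensions there is a maximal ideal $\frakn$ of $\RR[x]$ with $\frakn\cap\bar B=\frakm$, and $\RR[x]/\frakn$ is a finite field extension of $\bar B/\frakm=\RR$, hence $\RR$ or $\CC$. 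In the first case $\frakn=(x-a)$ for some $a\in\RR$ and $\phi=\mathrm{ev}_a|_{\bar B}$, which is exactly what we want.

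The crux is to exclude $\RR[x]/\frakn\cong\CC$, and this is the only place the order of $\RR$ enters — mirroring the restriction to $\bfk=\RR$ already needed for the monomial case. In that case $\frakn=\bigl((x-\alpha)^2+\beta^2\bigr)$ with $\beta>0$, so $\phi$ would be evaluation at $z:=\alpha+i\beta$; in particular $g(z)\in\RR$ for all $g\in B$. Since each $R_m\in B$ has real coefficients, $R_m(z)=\overline{R_m(z)}=R_m(\bar z)$ for all $m$, hence by linearity $\int_{\bar z}^{z}p(t)\,r(t)\,dt=0$ for every $p\in\RR[x]$, the integral taken along the vertical segment from $\bar z$ to $z$. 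On that segment $t=\alpha+iu$ one has $\bar t=2\alpha-t$, so the choice $p(t):=r(2\alpha-t)\in\RR[x]$ satisfies $p(\alpha+iu)=\overline{r(\alpha+iu)}$, and the vanishing of the integral becomes $i\int_{-\beta}^{\beta}|r(\alpha+iu)|^2\,du=0$, which is impossible because $r$ has only finitely many zeros and $\beta>0$. This contradiction forces $\RR[x]/\frakn\cong\RR$, completing the argument. I expect this last positivity step to be the essential difficulty: the commutative-algebra reductions (injectivity of $P_r$, module-finiteness over $\bar B$, lying-over) are routine, whereas excluding complex residue fields genuinely requires exploiting that $B$ is "large enough" together with the reflection identity $\bar t=2\alpha-t$ and the positivity of $\int|r|^2$ over the real line.
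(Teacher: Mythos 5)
The statement you are proving is left as an open \emph{conjecture} in the paper: the authors establish it only for monomials $r=x^k$ (Theorem~\ref{thm:main}), by explicitly computing $\alghom{(\RR[x],\star_k)}$ through the inductive relations of Lemmas~\ref{lem:valueone} and~\ref{lem:nuvall} and Proposition~\ref{prop:allhomk}, and they only sketch a strategy for general $r$ based on recovering the initialization point (Lemmas~\ref{lem:init-pt} and~\ref{lem:even-power}). Your argument is therefore not comparable to a proof in the paper; as far as I can check, it is correct and complete, and it settles the conjecture. The reduction to $\alghom{(\RR[x],\star_r)}$ via Theorem~\ref{thm:bij} is the paper's own framework; the new content is the transport through the algebra isomorphism $\stdint{0}r\colon(\RR[x],\star_r)\to B$ onto the nonunitary subalgebra $B=\{g\mid g(0)=0,\ r\mid g'\}$ of $(\RR[x],\cdot)$ --- a genuine generalization of Theorem~\ref{thm:uniquev}(\ref{it:iso}) --- followed by the commutative-algebra step (since $\tfrac{k+1}{c}\stdint{0}(r)$ is monic of degree $k+1$, one gets $\RR[x]=\bar B+\bar Bx+\cdots+\bar Bx^{k}$, so $\RR[x]$ is module-finite and hence integral over $\bar B$, and lying-over forces any character of $\bar B$ to be induced by a real or complex point) and the positivity step excluding complex points. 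I verified the delicate parts: the character with kernel $\frakm$ agrees with $\evl_a$ when $\frakn=(x-a)$ because both kill $\frakm$ and fix constants; and in the complex case $R_m(z)=R_m(\bar z)$ for all $m$ gives $\int_{\bar z}^{z}p(t)r(t)\,dt=0$ for all real $p$, which with $p(t)=r(2\alpha-t)$ yields $i\int_{-\beta}^{\beta}|r(\alpha+iu)|^{2}\,du=0$, impossible for $\beta\neq 0$. This is exactly where the order of $\RR$ must enter (as it does in Lemma~\ref{lem:valueone} for the monomial case), but your positivity argument is uniform in $r$ where the paper's is tied to explicit powers. The only points to write out in a final version are the ones you flag as routine: that the prime given by lying-over is maximal and $\RR[x]/\frakn$ is finite over $\RR$ (both from module-finiteness), and that the embedding $\bar B/\frakm\hookrightarrow\RR[x]/\frakn$ is the standard copy of $\RR$ because it contains the constants and is one-dimensional over them.
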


In the rest of this paper, we add some preliminary results in support
of this conjecture. Let us call a Rota-Baxter operator~$P$ on~$\RR[x]$
\emph{initialized} at a point~$a \in \RR$ if~$\evl_a \circ P$ is the
zero operator, where~$\evl_a\colon \RR[x] \to \RR[x]$ denotes
evaluation at~$a$. The typical case is when~$P = \stdint{a}r$. It is
easy to see that Conjecture~\mref{conj:rbor} is equivalent to the
claim that all Rota-Baxter operators in~$\rbo_r(\RR[x])$ are
initialized. Indeed, if~$P$ is initialized at~$a$, then we may
multiply the differential law~$\partial \circ P = r$ by~$\stdint{a}$
from the left to obtain~$P = \stdint{a}r$ since we
have~$\stdint{a} \partial = 1_{\RR[x]}-\evl_a$. So for proving
Conjecture~\mref{conj:rbor} one has to determine the initialization
point~$a$ from a given Rota-Baxter operator~$P$ and~$r \in
\nonz{\RR[x]}$. If~$P$ is already known to be of the
form~$\stdint{a}r$, this can be done as follows.

\begin{lemma}
  For the Rota-Baxter operator~$P = \stdint{a}r$ with~$a \in \RR$ and~$r \in
  \nonz{\RR[x]}$ we have
  \begin{equation}
    \mlabel{eq:init-pt}
    a = \frac{P(2xr'+r)-xr^2}{P(2r')-r^2},
  \end{equation}
  provided~$r(a) \neq 0$. On the other hand, if~$r(a) = 0$ then~$P =
  (r-\stdint{a}r') \circ \stdint{0}$.
  \mlabel{lem:init-pt}
\end{lemma}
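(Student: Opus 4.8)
The statement has two parts, both concerning~$P = \stdint{a}r$. For the first formula~\eqref{eq:init-pt}, the idea is to exploit the Rota-Baxter relation~\eqref{eq:RB} evaluated at a cleverly chosen pair of arguments so that~$a$ appears linearly and can be solved for. The natural input is~$u = v = r$ (or perhaps~$u = 1$, $v = r^2$): since~$\partial \circ P = r$ we know~$P(r) = \int_a^x r(t)^2\,dt$ up to the precise normalization, and by the fundamental theorem~$P(r)(x) = \int_a^x r^2$. The key observation is that~$\stdint{a}$ can be ``undone'' by~$\partial$, and that constants produced by the lower limit~$a$ are exactly what we want to isolate. Concretely, I would compute~$P(2xr' + r)$ and~$P(2r')$ directly from~$P = \stdint{a}r$: since~$2xr' + r = (2xr \cdot \text{something})'$-style manipulations, one finds antiderivative identities. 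Indeed~$\int_a^x (2tr'(t) + r(t)) r(t)\,dt$ and~$\int_a^x 2r'(t) r(t)\,dt = r(x)^2 - r(a)^2$ are the relevant ones; the second is immediate since~$2r'r = (r^2)'$, giving~$P(2r') = r^2 - r(a)^2$ (as polynomials, using that~$\partial \circ P = r$ forces~$P(g') = \int_a^x g' r$... wait — one must be careful that~$P = \stdint{a}r$ means~$P(f) = \stdint{a}(rf)$, so~$P(2r') = \stdint{a}(2rr') = \stdint{a}((r^2)') = r^2 - r(a)^2$). Similarly~$P(2xr' + r) = \stdint{a}(r(2xr'+r)) = \stdint{a}((xr^2)') = xr^2 - a\,r(a)^2$. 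Dividing, $\frac{P(2xr'+r) - xr^2}{P(2r') - r^2} = \frac{-a\,r(a)^2}{-r(a)^2} = a$, valid precisely when~$r(a) \neq 0$.

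**The degenerate case.** When~$r(a) = 0$, the denominator~$P(2r') - r^2 = -r(a)^2$ vanishes, so the formula breaks down and a separate argument is needed. Here~$r(a) = 0$ means~$x - a \mid r$ in~$\RR[x]$, and the claim is~$P = (r - \stdint{a}r') \circ \stdint{0}$. The plan is to verify this by evaluating both sides on the basis~$x^n$, or better, to manipulate operator identities. Since~$P = \stdint{a}r$ and~$\stdint{a}\partial = 1 - \evl_a$ (as noted in the paragraph preceding the lemma), and~$\stdint{0}\partial = 1 - \evl_0$, one can write~$P = \stdint{a} \circ r$ and try to factor out~$\stdint{0}$ on the right. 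Writing~$r = (x-a)\tilde r$ is not obviously the cleanest route; instead I would use integration by parts at the operator level: for any~$f$, $\stdint{a}(rf) = \stdint{a}(r (\stdint{0}f)') = r \cdot \stdint{0}f\big|_a^x - \stdint{a}(r' \stdint{0}f) = r \stdint{0}f - r(a)(\stdint{0}f)(a) - (\stdint{a}r')(\stdint{0}f)$. Since~$r(a) = 0$ the middle term drops out, yielding exactly~$P(f) = (r - \stdint{a}r')(\stdint{0}f)$, i.e.~$P = (r - \stdint{a}r') \circ \stdint{0}$ as claimed.

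**Main obstacle.** The genuinely delicate point is getting the normalization conventions right — whether~$P = \stdint{a}r$ denotes~$f \mapsto \stdint{a}(rf)$ or~$f \mapsto r \cdot \stdint{a}(f)$ — since the paper writes premultiplication as~$Pr$ with~$(Pr)(u) = P(ru)$, so~$\stdint{a}r$ acting on~$f$ is~$\stdint{a}(rf)$. Once this is pinned down, both parts reduce to the two antiderivative identities~$\stdint{a}((r^2)') = r^2 - r(a)^2$ and~$\stdint{a}((xr^2)') = xr^2 - a r(a)^2$ together with the operator-level integration-by-parts identity~$\stdint{a}(r \cdot g') = rg - r(a)g(a) - \stdint{a}(r'g)$ applied with~$g = \stdint{0}f$. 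Neither step is hard, but the bookkeeping with the lower limits~$a$ versus~$0$ must be done carefully; that is where an error would most likely creep in. I would present the proof in exactly the two-case structure of the statement, doing the~$r(a) \neq 0$ case first via the two explicit antiderivative evaluations, then the~$r(a) = 0$ case via the integration-by-parts identity.
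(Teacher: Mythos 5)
Your proof is correct, and the $r(a)\neq 0$ half takes a genuinely more direct route than the paper's. You simply observe that $r\,(2r') = (r^2)'$ and $r\,(2xr'+r) = (xr^2)'$, so that $P(2r') = r^2 - r(a)^2$ and $P(2xr'+r) = xr^2 - a\,r(a)^2$, whence numerator and denominator of~(\ref{eq:init-pt}) are the constants $-a\,r(a)^2$ and $-r(a)^2$ and the quotient is $a$. The paper instead derives the family of identities $r^2 r^{(i)} - r(a)^2 r^{(i)}(a) = P(2r'r^{(i)} + rr^{(i+1)})$, specializes to $i=n-1$, and extracts the leading coefficient to reach the equivalent relation $(x-a)r^2 = P(2xr'-2ar'+r)$; this is more roundabout for the lemma itself, but those intermediate identities (in particular Eq.~(\ref{eq:r2ri}) and Eq.~(\ref{eq:det-rel})) are phrased so as to be reusable in the subsequent discussion of Conjecture~\ref{conj:rbor}, where $P$ is only assumed to satisfy the differential law rather than known to equal $\stdint{a}r$ --- your computation buys brevity at the cost of that reusability, which is immaterial for the statement as posed. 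Your $r(a)=0$ case is essentially identical to the paper's: both amount to the integration-by-parts identity $\stdint{a}(rg') = rg - r(a)g(a) - \stdint{a}(r'g)$ applied with $g = \stdint{0}f$, with the boundary term killed by $r(a)=0$. You also correctly pinned down the one notational trap, namely that $\stdint{a}r$ denotes $f \mapsto \stdint{a}(rf)$.
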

\begin{proof}
  Let us first consider the generic case~$r(a) \neq 0$. Using the differential
  law~$\partial \circ P = r$, one sees immediately that numerator and
  denominator are both constants since they vanish under~$\partial$. Moreover,
  the denominator cannot be zero since we have
  \begin{equation*}
    P(2r') = \int_a^x (r^2)' = r^2 - r(a)^2 \neq r^2
  \end{equation*}
  by the assumption of genericity. Integrating~$(r^2 r^{(i)})' = 2 rr'r^{(i)} +
  r^2 r^{(i+1)}$ from~$a$ to~$x$, we obtain
  \begin{equation}
    \label{eq:r2ri}
    r^2 r^{(i)} - r(a)^2 r^{(i)}(a) = P(2r'r^{(i)} + rr^{(i+1)}).
  \end{equation}
  Assuming~$r$ has degree~$n$, we can write
  \begin{equation*}
    r = 1 + r_1 x + r_2 \frac{x^2}{2!} + \cdots + r_n \frac{x^n}{n!}
  \end{equation*}
  so that~$r^{(n-1)} = r_{n-1} + r_n x$ and~$r^{(n)} =
  r_n$. Substituting~$i=n-1$ and~$r(a)^2 = r^2 - P(2r')$ in Eq.~\eqref{eq:r2ri}, we
  obtain the relation
  \begin{equation}
    \mlabel{eq:det-rel}
    (r_{n-1}+r_n x) r^2 - (r^2 - P(2r')) (r_{n-1}+r_n a) = P(2 r_{n-1}r' +
    2 r_n x r' + r_nr),
  \end{equation}
  which simplifies to~$(x-a)r^2 = P(2xr'-2ar'+r)$. Solving this for~$a$
  gives Eq.~(\mref{eq:init-pt}).

  Now assume~$r(a) = 0$. Then for~$f \in \RR[x]$ we obtain
  \begin{equation*}
    Pf' = \int_a^x rf' = [rf]_a^x - \int_a^x r'f = rf - \stdint{a}r'f
  \end{equation*}
  and hence by~$(\stdint{0}f)' = f$ the required identity~$Pf = (r \stdint{0})f
  - (\stdint{a}r'\stdint{0})f = (r - \stdint{a}r')\stdint{0}(f)$.
\end{proof}

Lemma~\mref{lem:init-pt} suggests the following strategy for proving
Conjecture~\mref{conj:rbor}. Given an arbitrary~$P \in
\rbo_r(\RR[x])$, we determine first the denominator
of Eq.~(\mref{eq:init-pt}). If it vanishes, we try to find~$\tilde{P} \in
\rbo_{r'}(\RR[x])$ with~$P = (r-\tilde{P}) \circ \stdint{0}$, and we
use induction on the degree of~$r$ to handle~$\tilde{P}$. In the
generic case of non-vanishing denominator, we compute the value of~$a$
from Eq.~(\mref{eq:init-pt}), and it suffices to prove that~$P$ is
initialized at~$a$. For doing this, the first step would be to
ascertain that~$r(a)^2 = r^2-P(2r')$. This would imply that~$P(r')$
vanishes at~$x=a$ and hence also~$P(2xr'+r)$
by Eq.~\eqref{eq:det-rel}. Using the Rota-Baxter axiom and the above
relations, one can produce polynomials~$p$ such that~$P(p)$ vanishes
at~$x=a$. If this is done for sufficiently many polynomials~$p$ to
generate~$\RR[x]$ as a real vector space, we are done. Here is an
example of a class of polynomials where one can infer vanishing
at~$x=a$ provided~$r(a)^2 = r^2 -P(2r')$ has been established. For~$P
= \stdint{a}r$, it recovers the fact that~$\stdint{a}(r' r^{2k+1}) =(2k+2)^{-1}
\stdint{a}((r^{2k+2})') = (2k+2)^{-1} (r^{2k+2}-r(a)^{2k+2})$.

\begin{lemma}
  Let~$P \in \rbo_r(\RR[x])$ be arbitrary. Then we have~$P(r' r^{2k})
  = (2k+2)^{-1} (r^{2k+2} - c^{k+1})$ for~$c := r^2-P(2r')
  \in \RR$ and all~$k \ge 0$.
  \mlabel{lem:even-power}
\end{lemma}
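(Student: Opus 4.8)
The plan is to establish the identity by induction on $k$, with the Rota-Baxter axiom~(\mref{eq:RB}) at weight $\lambda=0$ as essentially the only ingredient beyond linearity of $P$ and the differential law $\partial\circ P=r$. As a preliminary observation, $c$ is genuinely a scalar: applying $\partial$ gives $\partial(r^2-P(2r'))=2rr'-2rr'=0$, so $c=r^2-P(2r')\in\RR$. I abbreviate $Q_k:=P(r'r^{2k})$, so the assertion to be proved reads $Q_k=\frac{1}{2k+2}\,(r^{2k+2}-c^{k+1})$, and note that the base case $k=0$ is simply a restatement of the definition of $c$, since $2Q_0=2P(r')=r^2-c$.

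For the inductive step, assuming the formula holds at some $k\ge 0$, I would apply~(\mref{eq:RB}) to the pair $u=r'$, $v=r'r^{2k}$, obtaining
\[
  P(r')\,P(r'r^{2k})=P\big(r'\,P(r'r^{2k})\big)+P\big(P(r')\,r'r^{2k}\big).
\]
The left-hand side is $Q_0Q_k$. On the right I substitute the induction hypothesis for $P(r'r^{2k})$ and the base case for $P(r')$, expand each argument of the outer $P$ using the trivial identity $r^2\cdot r'r^{2j}=r'r^{2j+2}$, and pull scalars out by linearity; every term then becomes one of $Q_0$, $Q_k$, $Q_{k+1}$. Collecting the $Q_{k+1}$ contributions produces the coefficient $\frac{1}{2k+2}+\frac12=\frac{k+2}{2(k+1)}$, which is invertible since $\RR$ has characteristic zero, so one can solve for $Q_{k+1}$; inserting the explicit expressions for $Q_0$ and $Q_k$ and simplifying---the terms $\pm c^{k+1}r^2$ and $\pm c\,r^{2k+2}$ cancel pairwise---leaves $Q_{k+1}=\frac{1}{2(k+2)}(r^{2k+4}-c^{k+2})$, which is precisely the formula at $k+1$.

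The only step that takes any care is the algebraic bookkeeping in the inductive step: keeping track of which power of $c$ multiplies each monomial so that the cancellations go through and the rational coefficients line up. I do not expect a conceptual obstacle, since once the Rota-Baxter relation is written down for the right pair $(u,v)$ the remainder is routine manipulation in $\RR[x]$. (A slightly different route would replace the Rota-Baxter computation by working with the associated measure $\mu=\stdint{0}r-P$, which is multiplicative for $\star_r$ by Theorem~\mref{thm:bij}, and evaluating $\mu(r'r^{2k})$ directly; but the direct induction above is shorter.)
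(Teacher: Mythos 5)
Your proof is correct, and the inductive step does close: writing $Q_j := P(r'r^{2j})$, the Rota--Baxter identity Eq.~(\mref{eq:RB}) applied to the pair $(u,v)=(r',\,r'r^{2k})$ gives
\begin{equation*}
  Q_0\,Q_k \;=\; \tfrac{1}{2k+2}\bigl(Q_{k+1}-c^{k+1}Q_0\bigr)+\tfrac12\bigl(Q_{k+1}-c\,Q_k\bigr),
\end{equation*}
and solving for $Q_{k+1}$ with $Q_0=\tfrac12(r^2-c)$ and $Q_k=\tfrac{1}{2k+2}(r^{2k+2}-c^{k+1})$ yields $Q_{k+1}=\tfrac{1}{2k+4}(r^{2k+4}-c^{k+2})$ after exactly the cancellations you indicate. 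Note, however, that your route is genuinely different from the paper's. The paper starts from the iterated power formula $P(r')^{k+1}=(k+1)!\,P_{r'}^{k+1}(1)=(k+1)\,P\bigl(r'\,P(r')^k\bigr)$ for weight-zero Rota--Baxter operators, expands $(r^2-c)^k$ by the binomial theorem, invokes the induction hypothesis for \emph{all} $l<k$ (so it is a strong induction), and then evaluates the resulting binomial sum via an auxiliary integration identity for $(x-c)^k$. Your argument needs only the immediately preceding case together with the base case, uses a single instance of Eq.~(\mref{eq:RB}), and replaces the sum evaluation by a two-term cancellation; it is shorter and requires only ordinary induction. What the paper's approach buys is the explicit link to the exponential-type structure $P(u)^n=n!\,P_u^n(1)$ of weight-zero Rota--Baxter operators, but for the purpose of this lemma your direct computation is entirely sufficient.
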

\begin{proof}
  We use induction on~$k$. The base case~$k=0$ is immediate from the
  definition of~$c$. Now assume the claim for all degrees below a
  fixed~$k>0$; we prove it for~$k$. By the Rota-Baxter axiom
and the
  definition of~$c$ we have
  \begin{align*}
    P(r')^{k+1} &= (k+1)! \, P_{r'}^{k+1}(1) = (k+1)! \, P(r' \, P_{r'}^k(1)) =
    (k+1) \, P(r' \, P(r')^k)\\
    &= 2^{-k} (k+1) \, P(r'(r^2-c)^k),
  \end{align*}
  where~$P_{r'}\colon \RR[x] \to \RR[x]$ is defined by~$P_{r'}(p) :=
  P(r'p)$. Substituting the defining relation of~$c$ on the
  left-hand side, we obtain $(r^2-c)^{k+1} = 2 (k+1) \,
  P(r'(r^2-c)^k)$, so the binomial theorem yields
  \begin{equation*}
    (2k+2) \, P(r' r^{2k}) = (r^2-c)^{k+1} - 2(k+1)
    \sum_{l=0}^{k-1} \binom{k}{l} \, (-c)^{k-l} P(r'r^{2l}).
  \end{equation*}
  Applying the induction hypothesis leads to
  \begin{align*}
    (2k+2) \, P(r' r^{2k}) &= (r^2-c)^{k+1} - (k+1) \sum_{l=0}^{k-1}
    \binom{k}{l} \, \frac{(-c)^{k-l}}{l+1} \,
    \Big((r^2)^{l+1}-c^{l+1}\Big)\\
    &= (r^2-c)^{k+1} + (r^{2k+2}-c^{k+1}) - (k+1) \sum_{l=0}^k
    \binom{k}{l} \, \frac{(-c)^{k-l}}{l+1} \,
    \Big((r^2)^{l+1}-c^{l+1}\Big).
  \end{align*}
  For evaluating the above sum, just note that integrating~$(x-c)^k$
  from~$\alpha$ to~$\beta$ and using the binomial theorem gives
  \begin{equation*}
    \frac{(\beta-c)^{k+1} - (\alpha-c)^{k+1}}{k+1} =
    \sum_{l=0}^k \binom{k}{l}
    \, \frac{(-c)^{k-l}}{l+1} \, \Big(\beta^{l+1}-\alpha^{l+1}\Big),
  \end{equation*}
  which may be evaluated at~$(\alpha,\beta) = (c, r^2)$ in the previous sum
  to obtain
  \begin{equation*}
    (2k+2) \, P(r' r^{2k}) = (r^2-c)^{k+1} + r^{2k+2}-c^{k+1} -
   (r^2-c)^{k+1} = r^{2k+2}-c^{k+1},
  \end{equation*}
  which completes the induction.
\end{proof}

We conclude with a simple result about the double product~$\star$ in
the general case of~$\stdint{a}r$. This lemma is a kind of analogy
(though not a generalization) of
Theorem~\ref{thm:uniquev}(\ref{it:iso}).
In fact, the two results
coincide for~$r=x$.

\begin{lemma}
  Let~$\star$ be the double product corresponding to the Rota-Baxter
  operator~$\stdint{a}r$ and set~$\rho = r(a)$. Then the non-unitary
  subalgebra of~$(\bfk[x], \star)$ generated by~$u_n = n r^{n-2} r' \;
  (n \ge 2)$ is isomorphic to the non-unitary subalgebra of~$(\bfk[x],
  \cdot)$ generated by~$x^n - \rho^n \; (n \ge 2)$.
\end{lemma}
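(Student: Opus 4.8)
The plan is to compute the double product $\star$ and the ordinary product on the respective generators, observe that both satisfy the same quadratic structure equation, and then read off the isomorphism as the linear extension of $u_n \mapsto x^n - \rho^n$. The only substantive computation is a single integration; everything else is bookkeeping with bases.

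First I would evaluate $P := \stdint{a}r$ on the generators. Since $P(f) = \int_a^x r f$ and $u_n = n r^{n-2} r'$, the integrand is the exact derivative $n r^{n-1} r' = (r^n)'$, so $P(u_n) = r^n - \rho^n$. Substituting this into the double product and using $r^m u_n + u_m r^n = (m+n) r^{m+n-2} r' = u_{m+n}$ gives
\[
  u_m \star u_n = P(u_m)\, u_n + u_m\, P(u_n) = u_{m+n} - \rho^m u_n - \rho^n u_m \qquad (m, n \ge 2).
\]
An elementary expansion of $(x^m - \rho^m)(x^n - \rho^n)$ shows that $v_n := x^n - \rho^n$ obeys the identical relation $v_m v_n = v_{m+n} - \rho^m v_n - \rho^n v_m$ in $(\bfk[x], \cdot)$.

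Next I would check that $\{u_n \mid n \ge 2\}$ is a $\bfk$-basis of the subalgebra $U$ it generates. Spanning is immediate: since $m+n \ge 4 \ge 2$, the structure equation shows the $\bfk$-span of $\{u_n \mid n \ge 2\}$ is closed under $\star$, hence equals $U$. For linear independence, we may assume $r$ is nonconstant (if $r \in \bfk$ every $u_n$ vanishes); then, as $\bfk$ has characteristic zero, $\deg u_n = (n-2)\deg r + (\deg r - 1)$ is strictly increasing in $n$, so the $u_n$ have pairwise distinct degrees and are independent. The same, easier, argument (with $\deg v_n = n$) shows $\{v_n \mid n \ge 2\}$ is a $\bfk$-basis of the subalgebra $V$ it generates.

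Finally, let $\Psi\colon U \to V$ be the $\bfk$-linear map with $\Psi(u_n) = v_n$. It is a well-defined $\bfk$-linear bijection by the two basis statements, and it is multiplicative since on basis vectors $\Psi(u_m \star u_n) = v_{m+n} - \rho^m v_n - \rho^n v_m = v_m v_n = \Psi(u_m)\Psi(u_n)$, which extends to all of $U$ by bilinearity of the two products. Thus $\Psi$ is the desired isomorphism of nonunitary $\bfk$-algebras. The step I expect to be the crux — modest as this lemma is — is the linear-independence count: it is what makes the assignment $u_n \mapsto v_n$ a genuine basis-to-basis bijection rather than merely a surjection of partially matched structure constants, and it is precisely there that the hypotheses of characteristic zero and nonconstant $r$ are used.
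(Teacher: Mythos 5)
Your proposal is correct and follows essentially the same route as the paper: compute $P(u_n)=r^n-\rho^n$, derive the structure relation $u_m\star u_n=u_{m+n}-\rho^n u_m-\rho^m u_n$, match it against $(x^m-\rho^m)(x^n-\rho^n)$, and conclude that the linear extension of $u_n\mapsto x^n-\rho^n$ is an isomorphism. You are in fact slightly more careful than the paper, which simply asserts that the map sends a basis to a basis, whereas you justify linear independence by the degree count (implicitly requiring $r$ nonconstant, an assumption the paper also makes tacitly).
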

\begin{proof}
  The double product of the basis elements~$u_m \; (m \ge 2)$ and~$u_n
  \; (n \ge 2)$ is given by
  \begin{align*}
    u_m \star u_n &= mn \, r^{m-2} r' \, \stdint{a} r^{n-1}r' + mn \, r^{n-2} r' \, \stdint{a} r^{m-1}r'
    = m \, r^{m-2} r' (r^n - \rho^n) + n \, r^{n-2} r' (r^m - \rho^m)\\
    &= u_{m+n} - \rho^n \, u_m - \rho^m \, u_n,
  \end{align*}
  so the $\bfk$-linear map~$\phi$ defined by~$\phi(u_m) = x^m-\rho^m$
  is a homomorphism of nonunitary~$\bfk$-algebras since we have
  \begin{equation*}
    (x^m - \rho^m)(x^n-\rho^n) = (x^{m+n}-\rho^{m+n}) - \rho^n (x^m-\rho^m) - \rho^m (x^n-\rho^n).
  \end{equation*}
  The map~$\phi$ is clearly bijective as it maps a $\bfk$-basis to a $\bfk$-basis.
\end{proof}
\smallskip

\noindent {\bf Acknowledgements}: This work was supported by the
National Science Foundation of US (Grant No. DMS~1001855), the
Engineering and Physical Sciences Research Council of UK (Grant No.
EP/I037474/1) and the National Natural Science Foundation of China
(Grant No. 11371178). L. Guo and S. Zheng also thank KITPC and Morning Center of Mathematics in Beijing for hospitality and support.

\end{document}